\newcommand{\R}{{\mathbb R}}
\newcommand{\Z}{{\mathbb Z}}
\newcommand{\Sp}{{\mathbb S}}
\newcommand{\be}{\begin{eqnarray}}
\newcommand{\ben}{\begin{eqnarray*}}
\newcommand{\en}{\end{eqnarray}}
\newcommand{\enn}{\end{eqnarray*}}
\newcommand{\ba}{\backslash}
\newcommand{\pa}{\partial}
\newcommand{\ov}{\overline}
\newcommand{\om}{\omega}
\newcommand{\ka}{\kappa}
\newcommand{\la}{\lambda}
\newcommand{\hth}{\theta}
\newcommand{\hx}{\hat{x}}
\newtheorem{theorem}{Theorem}[section]
\newtheorem{lemma}[theorem]{Lemma}
\newtheorem{corollary}[theorem]{Corollary}
\newtheorem{remark}[theorem]{Remark}
\newtheorem{proposition}[theorem]{Proposition}
\begin{document}
\title{\bf Identifying strictly convex obstacles from backscattering far field data}
\author{Jialei Li\thanks{School of Mathematical Sciences, University of Chinese Academy of Sciences, Beijing 100049, China, and Academy of Mathematics and Systems Science,
Chinese Academy of Sciences, Beijing 100190, China. Email: lijialei21@mails.ucas.ac.cn},\and
Xiaodong Liu\thanks{Academy of Mathematics and Systems Science,
Chinese Academy of Sciences, Beijing 100190, China. Email: xdliu@amt.ac.cn},\and
Qingxiang Shi\thanks{Corresponding author. School of Mathematical Sciences, Dalian University of Technology, Dalian 116024, China. Email: sqxsqx142857@126.com}}
\date{}
\maketitle

\begin{abstract}
The recovery of anomalies from backscattering far field data is a long-standing open problem in inverse scattering theory. We make a first step in this direction by establishing the unique identifiability of strictly convex impenetrable obstacles from backscattering far field measurements. Specifically, we prove that both the boundary and the boundary conditions of the strictly convex obstacle are uniquely determined by the far field pattern measured in backscattering directions for all frequencies. The key tool is Majda's asymptotic estimate of the far field patterns in the high-frequency regime. 
Furthermore, we introduce a fast and stable numerical algorithm for reconstructing the boundary and computing the boundary condition. A key feature of the algorithm is that the boundary condition can be computed even if the boundary is not known, and vice versa. Numerical experiments  demonstrate the validity and robustness of the proposed algorithm. 


\vspace{.2in}
{\bf Keywords:} inverse scattering; multi-frequency; sparse; backscattering; direct sampling method.

\vspace{.2in} {\bf AMS subject classifications:}
35P25, 45Q05, 78A46

\end{abstract}

\section{Introduction}

Inverse obstacle scattering problems (ISPs) concern reconstructing unknown obstacles from measurements of scattered fields resulting from incident waves. These problems are of paramount importance in numerous applications, including radar, non-destructive testing, medical imaging, geophysical prospecting, and remote sensing. Substantial theoretical and computational advances have been made in the past decades for solving time harmonic ISPs, as comprehensively documented in the seminal monograph by Colton and Kress \cite{CK}.

However, ISPs can not be considered completely solved, especially from a numerical point of view, and remain the subject matter of much ongoing research.
The fundamental difficulties in solving ISPs originate from their nonlinearity and ill-posed nature, which present significant challenges for numerical computations.
In recent decades, non-iterative reconstruction methods have drawn considerable research interest due to their computational efficiency and robustness, including the linear sampling method \cite{ColtonKirsch}, the factorization method \cite{Kirsch98, KirschGrinberg}, and more recently developed direct sampling methods \cite{CCHuang, ItoJinZou, LiLiuZou, LiZou, LiuIP17, Potthast2010}. In particular, direct sampling methods inherit many advantages of the classical linear sampling method and factorization method, e.g., they are independent of any a priori information on the geometry and physical properties of the unknown objects.
Moreover, the major advantage of direct sampling methods is their computational simplicity. Specifically, only the inner product of the measurement with some suitably chosen function is involved in the computation of the indicators of the direct sampling methods. This formulation renders the methods computationally efficient and remarkably stable against measurement noise. 

A notable limitation of the sampling methods discussed above is their qualitative nature. While they can effectively provide an approximation of the locations and shapes of the objects, they generally cannot determine the physical properties.
This shortcoming significantly restricts their applicability in many practical scenarios, such as geophysical prospecting, where precise determination of boundary conditions is often essential to identify the targets. 
In particular, the determination of the impedance parameter has been a central focus in inverse scattering research. 
To the best of our knowledge, it is always assumed that the boundary is known exactly to non-iteratively solve the impedance parameter via either moment equation methods \cite{ChengLiuNaka2003, Colton1982, ColtonKirsch1981} or probe-type methods \cite{CaconiColton2004, CaconiColtonMonk2010, LiuNakaSini2007}. The moment equation methods involve solving an integral equation of the first kind with noisy kernel and noisy right-hand sides. Therefore, such methods are highly unstable, particularly for variable impedance parameters. 
Probe-type methods offer a direct reconstruction formula for the impedance parameter at any boundary point but require measurements in all observation directions from all incident directions, which may be expensive in applications.
For simultaneous reconstruction of both the boundary and the impedance parameter, we refer to gradient-based iterative methods, including the first regularized Newton method by Kress and Rundell \cite{KressRundell2001} and more \cite{HeKindSini2009, KressRundell2018, Lee2014, Serrenho2006}. 

The other limitation for direct sampling methods is that they are usually designed for the full aperture scenario. However, practical implementations often must deal with the more challenging limited-aperture configuration and even backscattering data, i.e., one receiver and one transmitter with a fixed location are used to collect data. 
Currently, there is still a lack of global uniqueness results for inverse backscattering problems. For the case of Sch\"{o}rdinger operator and acoustic medium, it is shown that the local uniqueness is valid in a dense subset of compactly supported smooth potential (or medium) \cite{EskinRalston3, EskinRalston2,Uhlmann2001}. However, the global uniqueness remains open, although researchers are making progress on this subject \cite{Lagergren, OlaPS, RakUnl, Serov}.
For inverse obstacle backscattering problems, there are even fewer results due to the high nonlinearity. Kress and Rundell \cite{KressRun98} proved the local uniqueness near the unit disk by analyzing the Fr\'{e}chet derivatives. Based on the high-frequency asymptotics of the far field, Christiansen \cite{Christiansen2013} proved that a strictly convex real-analytic obstacle with Dirichlet or Neumann boundary condition can be determined by the generalized backscattering data in two directions. Besides, Shin \cite{Shin2016} showed that the radius of a sound-soft ball can be uniquely determined via phaseless backscattering data and proposed a frozen Newton method for numerically reconstructing convex objects.
The numerical methods using the multi-frequency backscattering far field patterns date back to Bojarski \cite{Bojarski} via physical optics approximation. The well-known Bojarski identity is proposed for reconstructing sound-soft or sound-hard obstacles. In a similar spirit, Arens, Ji, and Liu \cite{ArensJiLiu2020} developed a direct sampling method to reconstruct perfect conductors in the electromagnetic scattering context. Numerical experiments indicate that the high-frequency requirement could be relaxed to a certain extent, but a rigorous proof is not known. 
Additionally, there are some numerical results using backscattering aperture data. 
We refer to \cite{ColtonMonk} and \cite{DouLiuMengZhang} for the linear sampling method and the direct sampling method, respectively, for limited-aperture single frequency "backscattering" data (in the sense that both the incident and observation directions are made on the same aperture). The size is well reconstructed, but the reconstruction is highly elongated down range. Li, Liu, and Wang \cite{LiLiu2015, LiLiuWang2017} developed a scheme for convex polygonal obstacles based on a certain local maximum behavior of high-frequency far field pattern in the backscattering aperture.

This paper introduces a novel approach to ISPs through the analysis of point-wise backscattering data. 
We show that both the boundary geometry and the physical property of a convex obstacle can be uniquely determined from the  multi-frequency backscattering far field patterns. 
To the best of our knowledge, this is the first rigorous uniqueness result for identifying all the parameters of an obstacle from the backscattering far field patterns. 
Furthermore, we design an efficient and robust algorithm for identifying the obstacles, which is the first solution for the simultaneous determination of both shape and impedance parameters from the backscattering far field data. In particular, we develop direct sampling methods that are capable of reconstructing the obstacle's boundary independent of boundary conditions, and vice versa. 

The remainder of this paper is structured as follows: Section \ref{ProblemSetting} outlines the problem setup, while Section \ref{sec-InvP} presents the proof of uniqueness. Numerical algorithms based on this proof are introduced in Section \ref{sec-Numeric}, and numerical examples are provided in Section \ref{NumExamples} to demonstrate the effectiveness of the proposed methods.

\section{Problem setting}\label{ProblemSetting}

We begin with the formulation of the acoustic scattering problem. Let $k=\om/c>0$ be the wave number of a time harmonic wave, where $\om>0$ and $c>0$ denote the frequency and sound speed, respectively. In the whole paper, we consider multiple frequencies in a bounded band, i.e.,
\ben
k\in (k_{-}, k^{+}),
\enn
with two positive wave numbers $k_{-}$ and $k^{+}$.
Furthermore, let the incident field $u^{in}$ be a plane wave of the form
\be\label{incidenwave}
u^{in}(x)\ =\ u^{in}(x,\hth,k) = e^{ikx\cdot \theta},\quad x\in\R^n,
\en
where $\theta\in\Sp^{n-1}:=\{x\in\R^n:|x|=1\}$ ($n=2,3$) denotes the direction of the incident wave.

The scattering of plane waves by impenetrable obstacle $D$ involves finding the total field $u=u^{in}+u^s$ such that
\be
\label{HemEquobstacle}\Delta u + k^2 u = 0\quad \mbox{in }\R^n\setminus\ov{D},\\
\label{Bc}\mathcal{B}(u) = 0\quad\mbox{on }\pa D,\\
\label{Srcobstacle}\lim_{r:=|x|\rightarrow\infty}r^{\frac{n-1}{2}}\left(\frac{\pa u^{s}}{\pa r}-iku^{s}\right) =\,0,
\en
where $\mathcal{B}$ denotes one of the following three boundary conditions
\ben
(1)\,\mathcal{B}(u):=u\quad\mbox{on}\, \pa D;\qquad
(2)\,\mathcal{B}(u):=\frac{\pa u}{\pa\nu}\quad\mbox{on}\ \pa D;\qquad
(3)\,\mathcal{B}(u):=\frac{\pa u}{\pa\nu}+ik\la u\quad\mbox{on}\ \pa D
\enn
corresponding the cases in which the scatterer $D$ is sound-soft, sound-hard, and of impedance type, respectively.
Here, $\nu$ is the unit outward normal on $\pa D$ and $\la\in L^\infty(\partial D), \inf\la(x)>0$, is a positive real-value function on $\pa D$.

The well-posedness of the direct scattering problems \eqref{HemEquobstacle}--\eqref{Srcobstacle}
have been established and can be found in \cite{CK,Mclean}.
Every radiating solution of the Helmholtz equation has the asymptotic behavior at infinity 
\be\label{0asyrep}
u^s(x,\theta,k)
=\frac{e^{ikr}}{r^{\frac{n-1}{2}}}\left\{u^{\infty}(\hx,\theta,k)
+\mathcal{O}\left(\frac{1}{r}\right)\right\}\quad\mbox{as }\,r:=|x|\rightarrow\infty,
\en
uniformly in all directions $\hx:=x/|x|\in\Sp^{n-1}$ where the function $u^{\infty}(\hx,\theta,k)$ defined on the unit sphere $\Sp^{n-1}$
is known as the far field pattern of $u^s$ with $\hx\in\Sp^{n-1}$ denoting the observation direction.
For $l\in\Z_+\cup \{+\infty\}$, define
\ben
\Theta_l:=\{ \theta_j|j = 1,2,\cdots, l \}\subset \Sp^{n-1}.
\enn

In this paper, we consider the following two types of generalized backscattering direction configurations:
\begin{itemize}
  \item $\mathcal {A}_1:=\{(\hx,\hth)\in \Sp^{n-1}\times\Sp^{n-1} \,| \, \hx=Q\hth,\,\forall \hth\in\Theta_l \}$.
        The first data set $\mathcal{A}_1$ corresponds to the general backscattering experiment. Here, $Q$ is a fixed rotation in $\R^n$. In particular, the data set $\mathcal {A}_1$ reduces to the classical backscattering experiment if $Q=-I$, where $I$ is the identity matrix. The two sensors serve as the source and the receiver. The measurements are then taken by moving these two sensors around the scatterer simultaneously. 
  \item $\mathcal{A}_2:= \mathcal{A}_1^{(1)}\bigcup \mathcal{A}_1^{(2)}\bigcup \mathcal{A}_1^{(3)}$. The second data set $\mathcal{A}_2$ is a union of three generalized data sets $\mathcal{A}_1^{(j)}, j=1,2,3$ with different rotations $Q_j$ and the same direction set $\Theta_l$.
\end{itemize}

The inverse backscattering problem consists of the determination of the boundary $\pa D$ and the boundary condition $\mathcal{B}$ from the far field patterns $u^{\infty}(\hx,\theta,k)$ for all $(\hx,\hth)\in \mathcal {A}_j$ with $j=1\text{ or }2$ and all wave numbers $k\in (k_{-}, k^{+})$.

\section{Uniqueness}\label{sec-InvP}
In this section, we use high-frequency expansion of the far field pattern to derive uniqueness results from multi-frequency data. 
First, we recall Majda's asymptotics (Theorem \ref{Asymp-farfield}) and extend it to the case of an impedance obstacle in $\mathbb{R}^2$. We then prove our main results based on the high-frequency asymptotics.
\subsection{Asymptotics of far field patterns}

\begin{figure}[htbp]
\centering
\includegraphics[width=2in]{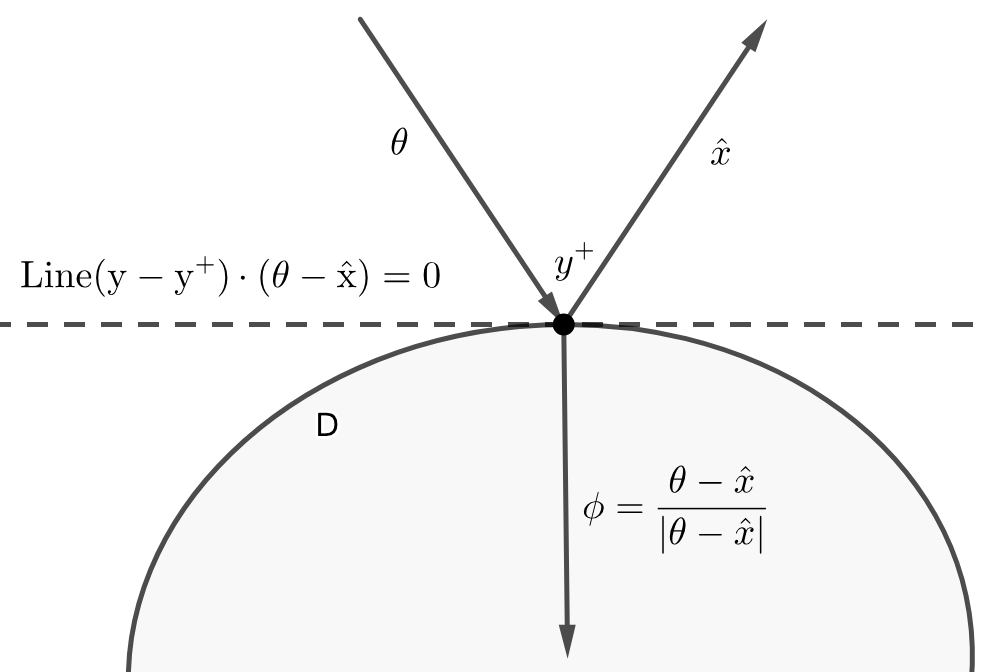}
\caption{
Physical optics diagram for incident direction $\hth$ and observation direction $\hx$.}
\label{pointline}
\end{figure}

As shown in Figure \ref{pointline}, for any fixed $(\hx,\theta)\in\mathcal {A}_j, \,j=1, 2$ satisfying $\hx\neq\hth$,
define the normal vector of the reflecting plane as
\be\label{phi}
\phi(\hx, \hth):=\frac{\theta-\hx}{|\theta-\hx|}.
\en
For strictly convex obstacles, the Gauss map $G: \partial D\to S^{n-1}$ defined by $G(y) = \nu(y)$ is bijective. In this case, we denote $\mathcal{G}:\mathbb S^{n-1}\times\mathbb S^{n-1}\to\partial D$ by
\begin{equation}
	y^+(\hx, \hth)= \mathcal{G}(\hx, \hth):= G^{-1}\left(-\phi(\hx, \hth)\right).
\end{equation}
In other words, $y^+(\hx, \hth)$ is the point at $\partial D$ such that $\nu(y^+(\hx, \hth)) = -\phi(\hx, \hth)$. 
For clarity and brevity, we will simply denote them as $\phi$ and $y^+$ when the context is unambiguous.

A key tool of the uniqueness proof is stated in the following theorem on the high-frequency asymptotic behavior of far field patterns.
\begin{theorem}\label{Asymp-farfield}
    Suppose that $D\subset \mathbb{R}^n, n=2,3$ is a smooth, strictly convex obstacle. Assume that any one of Dirichlet, Neumann, or impedance boundary conditions is satisfied on $\partial D$. Then, for any $\hx\neq \theta$, 
\begin{equation}\label{thm-majda}
\left|u^\infty(\hx, \theta, k)-e^{iky^+\cdot(\hx-\theta)}\kappa^{-1/2}(y^+) R^\lambda(\hx, \theta)\right|=O\left(\frac{1}{k}\right),
\end{equation}
as $k\to \infty$.
Here, $\kappa(y^+)>0$ is the Gauss curvature at $y^+$ and 
\begin{equation*}
R^\lambda(\hx, \theta)=
-|\hx-\theta|^{-\frac{n-1}{2}} \phi\cdot \hx \frac{\lambda(y^+)+\phi\cdot \hx}{\lambda(y^+)-\phi\cdot \hx}
\end{equation*}
is the reflection coefficient concerned with the boundary condition. For the Dirichlet and Neumann boundary conditions, we set $\lambda =\infty$ and $0$, respectively.
\end{theorem}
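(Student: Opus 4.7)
The plan is to base the proof on the Green's representation for the far field combined with a geometric optics (Kirchhoff) asymptotic expansion of the total field on the illuminated part of $\partial D$, and then to extract the leading order by the method of stationary phase. The Dirichlet, Neumann, and $3$D impedance cases are covered by Majda's theorem, so the genuinely new ingredient is the $2$D impedance case, but the unified expansion below covers all three boundary conditions at once. Concretely, starting from
$$u^\infty(\hx,\theta,k)=c_n\int_{\pa D}\Bigl(u(y)\,\partial_{\nu(y)}e^{-ik\hx\cdot y}-\partial_\nu u(y)\,e^{-ik\hx\cdot y}\Bigr)\,ds(y),$$
I would insert on the illuminated part the geometric optics ansatz $u\sim (1+R(y))\,e^{ik\theta\cdot y}$ and $\partial_\nu u\sim ik(\theta\cdot\nu)(1-R(y))\,e^{ik\theta\cdot y}$, with the local reflection coefficient $R(y)$ determined by enforcing $\partial_\nu u+ik\lambda u=0$ to leading order in $k$; matching yields $R(y)=(\theta\cdot\nu(y)+\lambda(y))/(\theta\cdot\nu(y)-\lambda(y))$, and the formal limits $\lambda\to\infty$ and $\lambda\to 0$ recover the Dirichlet and Neumann coefficients. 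On the shadow region the total field is of order $k^{-\infty}$ by the standard convex-obstacle parametrix and contributes nothing to the asymptotics.

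Substituting this expansion produces an oscillatory integral with phase $\psi(y)=(\theta-\hx)\cdot y$. Stationarity on $\pa D$ requires $\nu(y)\parallel \theta-\hx$, and strict convexity forces a unique illuminated stationary point at $y^+=\mathcal{G}(\hx,\theta)$ with $\nu(y^+)=-\phi$. The Hessian of $\psi|_{\pa D}$ at $y^+$ is nondegenerate with determinant proportional to the Gauss curvature, producing the prefactor $|\hx-\theta|^{-(n-1)/2}\kappa^{-1/2}(y^+)$ when stationary phase is applied in dimension $n-1$. Evaluating the amplitude at $y^+$ using the geometric identities $\theta\cdot\nu(y^+)=\phi\cdot\hx$ and $\hx\cdot\nu(y^+)=-\phi\cdot\hx$ reduces the symbol to a constant multiple of $(\phi\cdot\hx)(\lambda(y^+)+\phi\cdot\hx)/(\lambda(y^+)-\phi\cdot\hx)$, so that all constants combine to reproduce exactly $R^\lambda(\hx,\theta)$ as stated, with the remainder genuinely $O(k^{-1})$.

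The main obstacle is the rigorous construction of the impedance parametrix in $\mathbb{R}^2$ for variable $\lambda\in L^\infty(\pa D)$: one must control the error in the geometric optics expansion uniformly up to the shadow boundary, carry out the Luneberg--Kline iteration carefully enough to secure a remainder of the claimed order $O(k^{-1})$ rather than merely $o(1)$, and ensure that the low regularity of $\lambda$ does not spoil the stationary phase estimate, which nominally demands smooth amplitudes (a smoothing/localization of $\lambda$ together with a quantitative stationary phase lemma with limited-regularity amplitudes should suffice). Once these ingredients are in hand, the stationary phase argument is standard and the resulting formula unifies all three boundary conditions in the single bound \eqref{thm-majda}.
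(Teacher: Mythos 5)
Your outline (Green's representation plus a Kirchhoff/geometric-optics ansatz on the illuminated side, followed by stationary phase at the unique point $y^+$ with $\nu(y^+)=-\phi$) is the standard physical-optics derivation of the leading term, and the amplitude bookkeeping you describe does reproduce $R^\lambda$. But the proposal has a genuine gap precisely where rigor is required: you identify ``the rigorous construction of the impedance parametrix in $\mathbb{R}^2$'' and the control of the geometric-optics error up to the shadow boundary as ``the main obstacle'' and then leave it unresolved. That obstacle is the entire content of the theorem beyond what Majda already proved; asserting that a Luneberg--Kline iteration and a limited-regularity stationary-phase lemma ``should suffice'' does not close it. In particular, the naive Kirchhoff ansatz is not an admissible substitute for the true Cauchy data in the Green representation with an $O(k^{-1})$ error --- the grazing-ray region requires Taylor's parametrix machinery, which is exactly why Majda's proof is nontrivial.

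The paper closes the gap by a different and much lighter route, which your proposal does not identify. Majda's construction of the approximate solution and its far-field asymptotics already cover all three boundary conditions in both dimensions; the only missing hypothesis in the 2D impedance case is the wavenumber-explicit stability estimate (the input to Majda's Lemma 2.2) that transfers the asymptotics from the approximate solution to the exact one. The paper proves exactly this bound, $\|u\|_{L^2(B_R\setminus\overline{D})}\le Ck^{\alpha}\|g\|_{L^2(\partial D)}$ (Lemma \ref{lem-impe}), by combining $k$-explicit operator norms for the single- and double-layer potentials (Lemma \ref{lem-layerpotential}) with a Rellich-type identity controlling the Cauchy data: Green's identity plus the radiation condition give $\|u\|_{L^2(\partial D)}\le (k\inf\lambda)^{-1}\|g\|_{L^2(\partial D)}$ and hence $\|\partial_\nu u\|_{L^2(\partial D)}\lesssim\|g\|_{L^2(\partial D)}$, and the Green representation $u=-S_k\partial_\nu u+D_ku$ finishes the argument. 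No new parametrix is built, and the $L^\infty$ regularity of $\lambda$ causes no difficulty because $\lambda$ never enters a stationary-phase amplitude --- it enters only through the boundary condition in the energy identity. You should either supply the full parametrix-plus-error analysis you sketch (a substantial undertaking) or, as the paper does, reduce the problem to a resolvent-type estimate and prove that instead.
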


Theorem \ref{Asymp-farfield} was established by Majda \cite{Majda} except for the impedance obstacles in two dimensions due to technical reasons.  
The results concerning such asymptotic expansion of the radiating solution of the Helmholtz equation can be traced back to Ludwig \cite{Ludwig}. 
Majda \cite{Majda} significantly extended these results through the construction of approximate solutions using the parametrix method developed by Taylor \cite{Taylor}.
Majda specified the high-frequency asymptotics of the approximate solution, as well as the corresponding far field patterns, which is expected to dominate in \eqref{thm-majda}. 

When the discrepancy between the approximate and exact solutions becomes sufficiently small as $k\to\infty$ (see \cite[Lemma 2.2]{Majda}), one can rigorously establish the asymptotic behavior of the true solution.
Majda demonstrated this argument in the three-dimensional scenario in his work. For Dirichlet and Neumann boundary conditions in two dimensions, he referred to earlier results from Morawetz \cite{Morawetz}. 
The remaining challenge lies in establishing the estimates for the impedance boundary condition in two-dimensional settings, which we formally state as follows.
\begin{lemma}\label{lem-impe}
Consider the two-dimensional case, where $u$ is a radiating solution in the exterior of obstacle $D$ with a Lipschitz boundary,  subject to the impedance boundary condition ${\pa u}/{\pa\nu}+ik\la u=g$. Then the following estimate holds:
\begin{equation*}
    \|u\|_{L^2\left(B_R\backslash\ov{D}\right)}\leq C k^\alpha \|g\|_{L^2(\partial D)}.
\end{equation*}
Here, $B_R$ is a disk of radius $R$ centered at the origin such that $\overline{D}\subset B_R$, $\alpha>0$, and $C$ is a constant depending on $R, \alpha, \la$.
\end{lemma}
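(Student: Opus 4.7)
The strategy is to establish this polynomial-in-$k$ resolvent estimate by a Rellich-Morawetz multiplier identity on the truncated exterior domain $\Omega_R := B_R\setminus\overline{D}$, with the Sommerfeld radiation condition replaced by an exterior Dirichlet-to-Neumann boundary condition $\Lambda_k$ on $\partial B_R$. The impedance boundary condition is actually favorable for this approach because $\mathrm{Im}(ik\lambda)=k\lambda>0$ yields a dissipative boundary contribution on $\partial D$, analogous to the dissipation that $\Lambda_k$ provides on $\partial B_R$.

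First I would write the variational formulation of the truncated problem in $H^1(\Omega_R)$, incorporating the impedance condition as a Robin-type term. To quantify the solution operator, I would apply the Morawetz multiplier $\mathcal{M}u := \beta\cdot\nabla u + \alpha u$, with $\beta$ a smooth vector field coinciding with $x$ near $\partial B_R$ and arranged so that $\beta\cdot\nu$ is controlled on $\partial D$, and $\alpha$ a suitable real scalar. Multiplying $\Delta u + k^2 u = 0$ by $\overline{\mathcal{M}u}$ and integrating by parts (in the Ne\v{c}as sense for Lipschitz boundaries) yields a Rellich-type identity balancing $\|\nabla u\|_{L^2(\Omega_R)}^2$ and $k^2\|u\|_{L^2(\Omega_R)}^2$ against boundary integrals over $\partial D$ and $\partial B_R$.

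Second, I would substitute $\partial_\nu u = g - ik\lambda u$ into the $\partial D$ boundary term. The cross term $ik\lambda|u|^2$, upon extracting the imaginary part of the identity, yields a coercive contribution in the trace $\|u\|_{L^2(\partial D)}^2$, while terms involving $g$ are controlled by Cauchy-Schwarz. The $\partial B_R$ contribution is handled using the standard dissipativity $\mathrm{Im}\,\langle\Lambda_k\varphi,\varphi\rangle\geq 0$ for radiating solutions, together with polynomial-in-$k$ mapping bounds for $\Lambda_k$ in the relevant Sobolev spaces. Combining these inequalities gives an estimate of the form $\|u\|_{H^1(\Omega_R)}\leq C\bigl(k^{a_1}\|g\|_{L^2(\partial D)} + k^{a_2}\|u\|_{L^2(\Omega_R)}\bigr)$, and the lower-order $L^2$ term on the right is absorbed either by iteration or by a Fredholm/unique-continuation argument, since uniqueness for the exterior impedance problem with $\inf\lambda>0$ is classical.

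The main obstacle is the Lipschitz regularity of $\partial D$: smooth Morawetz vector fields cannot be chosen transversally with uniform control along a merely Lipschitz boundary, and the boundary integration by parts must be justified through density arguments and the Rellich identities of Ne\v{c}as and Verchota. This technicality, rather than any genuinely new difficulty, is presumably what Majda referred to as "technical reasons" for omitting the two-dimensional impedance case; the framework developed by Chandler-Wilde, Spence and collaborators for high-frequency resolvent estimates on Lipschitz obstacles with Robin-type data supplies all the machinery needed to carry the argument through. Since only some polynomial exponent $\alpha>0$ is required in the statement, no optimization of the exponent is necessary.
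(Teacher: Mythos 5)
Your plan is the classical Morawetz--Rellich multiplier route, which is genuinely different from what the paper does, and as described it has two concrete gaps. First, the multiplier identity only yields a wavenumber-explicit bound if the boundary terms it produces on $\pa D$ --- in particular the tangential-gradient term weighted by $\beta\cdot\nu$ --- have a favorable sign, and for that you need a geometric hypothesis such as star-shapedness (or at least a global escape vector field, i.e.\ nontrapping). The lemma assumes only that $\pa D$ is Lipschitz, with no star-shapedness or convexity, so no admissible $\beta$ exists in general; this is not a density/regularity technicality of the Ne\v{c}as--Verchota type but a genuine geometric obstruction. Second, your fallback of absorbing the leftover term $k^{a_2}\|u\|_{L^2(\Omega_R)}$ ``by iteration or by a Fredholm/unique-continuation argument'' defeats the purpose: a Fredholm or compactness argument produces a constant with uncontrolled dependence on $k$, whereas the entire content of the lemma is the explicit polynomial factor $k^\alpha$.

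The paper's proof avoids the multiplier altogether and is considerably more elementary. It first applies the plain Green/Rellich identity $\mathfrak{Im}\int_{\pa D}\pa_\nu u\,\ov{u}\,ds = k\|u^\infty\|^2_{L^2(S^1)}\ge 0$ (no vector field, no geometry), substitutes the impedance condition, and reads off \emph{both} Cauchy-data bounds directly from the dissipativity $\inf\la>0$: namely $\|u\|_{L^2(\pa D)}\le (k\inf\la)^{-1}\|g\|_{L^2(\pa D)}$ and $\|\pa_\nu u\|_{L^2(\pa D)}\lesssim\|g\|_{L^2(\pa D)}$ (Lemma \ref{lem-cauchydata}). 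It then writes $u=-S_k\pa_\nu u+D_k u$ by Green's representation and invokes the $k$-explicit operator norms $\|\chi S_k\|_{L^2(\pa D)\to L^2(\R^2)}\lesssim k^{-1/2}$, $\|\chi D_k\|_{L^2(\pa D)\to L^2(\R^2)}\lesssim k^{1/2}$ (Lemma \ref{lem-layerpotential}, quoted from Chandler-Wilde et al.\ and Spence), which hold for any Lipschitz boundary with no geometric condition. This gives $\|u\|_{L^2(B_R\setminus\ov{D})}\lesssim k^{-1/2}\|g\|_{L^2(\pa D)}$ in one line. The key insight you are missing is that for impedance boundary conditions the basic energy identity already controls the full Cauchy data on $\pa D$, so the interior estimate can be imported through layer-potential mapping bounds rather than fought for with a multiplier.
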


Spence \cite{Spence} proved this result in 2014 when $\la>0$ is a positive constant. Now we extend Spence's work to establish Lemma \ref{lem-impe}.

The fundamental solution to the Helmholtz equation in two dimensions is given by
\ben
\Phi_k(x,y):= \frac{i}{4} H_0^{(1)}(k|x-y|), x\neq y,
\enn
where $H_0^{(1)}$ denotes the Hankel function of the first kind of order zero.
For any integrable function $\phi\in L^2(\partial D)$, we recall the acoustic single-layer potential $S_k$ and acoustic double-layer potential $D_k$, which are defined respectively by  
\ben
S_k \phi:= \int_{\partial D} \Phi_k(x,y) \phi(y) ds(y), \quad x\in\R^2\ba\pa D,
\enn
and
\ben
D_k\phi := \int_{\partial D} \frac{\partial \Phi_k(x,y)}{\partial \nu(y)} \phi(y) ds(y)\quad x\in\R^2\ba\pa D.
\enn
We employ the notation $a\lesssim b$ if there is a constant $C$ independent of $k$ such that $a\leq C b$.
Now we state two key lemmas that are essential for the proof of Lemma \ref{lem-impe}.
\begin{lemma}\label{lem-layerpotential}
Let $D$ be an obstacle with Lipschitz boundary. For any cutoff function $\chi\in C_c^\infty(\R^2)$, given $k_0>0$, the following estimates hold:
\be
\|\chi S_k\|_{L^2(\partial D)\to L^2(\R^2)}\lesssim k^{-\frac{1}{2}} \quad \text{  and  }\quad
\|\chi D_k\|_{L^2(\partial D)\to L^2(\R^2)}\lesssim k^{\frac{1}{2}}
\en
for all $k>k_0$.
\end{lemma}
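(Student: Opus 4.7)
My plan is to prove both estimates by splitting each Hankel kernel into a near-field piece (where $k|x-y|\lesssim 1$) and an oscillatory far-field piece (where $k|x-y|\gtrsim 1$) via a smooth cutoff $\eta(k|x-y|)$, and bounding each by Schur's test using the appropriate Hankel asymptotics. Recall the two-sided behavior: as $t\to 0^+$, $H_0^{(1)}(t)=O(1+|\log t|)$ and $H_1^{(1)}(t)=-2i/(\pi t)+O(1+t|\log t|)$; as $t\to\infty$, $H_\ell^{(1)}(t)=\sqrt{2/(\pi t)}\,e^{i(t-(2\ell+1)\pi/4)}(1+O(1/t))$ for $\ell=0,1$. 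Together with $\partial_{\nu(y)}\Phi_k(x,y)=\tfrac{ik}{4}H_1^{(1)}(k|x-y|)\nu(y)\cdot(x-y)/|x-y|$, this lets me decompose $S_k=S_k^n+S_k^f$ and $D_k=D_k^n+D_k^f$ accordingly.

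For the far-field pieces, the large-argument asymptotics yield $|\Phi_k(x,y)|\lesssim (k|x-y|)^{-1/2}$ and $|\partial_{\nu(y)}\Phi_k(x,y)|\lesssim k^{1/2}|x-y|^{-1/2}$. Since $\partial D$ has finite length and $\mathrm{supp}\,\chi$ is bounded, the two Schur integrals $\sup_x\int_{\partial D}|x-y|^{-1/2}\,ds(y)$ and $\sup_y\int_{\mathrm{supp}\,\chi}|x-y|^{-1/2}\,dx$ are both finite uniformly in $k$. Schur's test therefore delivers $\|\chi S_k^f\|\lesssim k^{-1/2}$ and $\|\chi D_k^f\|\lesssim k^{1/2}$.

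For the near-field of $S_k$, the kernel on $\{|x-y|\le 2/k\}$ is dominated by $1+|\log(k|x-y|)|$. A direct polar-coordinate computation gives the two Schur integrals $\lesssim k^{-1}$ (along $\partial D$) and $\lesssim k^{-2}$ (over $\mathrm{supp}\,\chi$), so $\|\chi S_k^n\|\lesssim k^{-3/2}$, which is absorbed into the far-field estimate. Combining yields $\|\chi S_k\|\lesssim k^{-1/2}$.

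The hard part is the near-field of $D_k$. Expanding $H_1^{(1)}$ near zero shows that this kernel equals the $k$-independent static double-layer kernel $\tfrac{1}{2\pi}\nu(y)\cdot(x-y)/|x-y|^2$ up to a bounded lower-order correction. On a Lipschitz boundary only $|\nu(y)\cdot(x-y)|\le |x-y|$ is available pointwise, so the absolute-value Schur integral $\int_{\partial D}|x-y|^{-1}\,ds(y)$ diverges logarithmically and cannot produce the claimed bound---the cancellation inherent in the double-layer kernel has to be exploited. To do so I would follow Spence's approach in the constant-$\lambda$ case and appeal to the Coifman--McIntosh--Meyer $L^2$-boundedness of the Cauchy integral on a Lipschitz curve, which shows that the static double-layer extends to a bounded operator $L^2(\partial D)\to L^2_{\mathrm{loc}}(\R^2)$ with norm independent of $k$. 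Combined with the far-field estimate this yields $\|\chi D_k\|\lesssim k^{1/2}$. This near-field $D_k$ step is the chief obstacle of the proof: pointwise absolute-value methods break down on a merely Lipschitz boundary, and one has to invoke Cauchy-integral/Calder\'on-commutator machinery to recover the cancellation needed to close the estimate.
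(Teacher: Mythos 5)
The paper does not actually prove Lemma \ref{lem-layerpotential}: it is quoted from the literature, with the proof delegated to \cite[Theorems 2.15 and 2.16]{ChanGLS2012} and \cite[Lemma 4.3]{Spence}. Your proposal therefore cannot coincide with ``the paper's proof''; judged on its own terms it is a sound and essentially complete self-contained argument, and it is broadly in the spirit of the cited references (explicit Hankel asymptotics for the $k$-dependence, plus Lipschitz-boundary harmonic analysis for the singular part of the double layer). Two remarks on the details. First, for the single-layer bound there is an even shorter route than your near/far Schur splitting: $\sup_{y}\|\Phi_k(\cdot,y)\|_{L^2(\operatorname{supp}\chi)}\lesssim k^{-1/2}$ follows from one polar-coordinate computation (the logarithmic singularity at $0$ and the $t^{-1/2}$ decay at infinity of $H_0^{(1)}$ both integrate to $O(k^{-1})$ after squaring), and Minkowski's integral inequality then gives $\|\chi S_k\phi\|_{L^2}\lesssim k^{-1/2}\|\phi\|_{L^1(\partial D)}\lesssim k^{-1/2}\|\phi\|_{L^2(\partial D)}$; this is essentially what the cited sources do. Second, in your near-field double-layer step the operator you must bound has the $k$-dependent truncated kernel $\eta(k|x-y|)\,\nu(y)\cdot(x-y)/(2\pi|x-y|^2)$, whereas Coifman--McIntosh--Meyer/Verchota gives uniform bounds for the untruncated static potential (via the nontangential maximal function, integrated over a collar); you should either invoke uniform boundedness of the truncated singular integrals or bound the discarded tail $(1-\eta(k|x-y|))K_0$ by Schur's test, which costs only $O(\sqrt{\log k})$ and is harmless against the $k^{1/2}$ budget. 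This is a small bridging step left implicit, not a flaw in the strategy.

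It is also worth noting that the target bound for $D_k$ grows like $k^{1/2}$, so the ``chief obstacle'' you identify is less severe than for the boundary-to-boundary operator: since the observation variable ranges over a two-dimensional set rather than over $\partial D$, the logarithmic blow-up of $\int_{\partial D}|x-y|^{-1}\,ds(y)$ as $x$ approaches the boundary is square-integrable over the collar of width $2/k$, and a weighted Schur test (with weight $1+\log\bigl(C/(k\,\mathrm{dist}(x,\partial D))\bigr)$) closes the near-field estimate with a bound that is $o(1)$ in $k$, without any appeal to Calder\'on--Zygmund cancellation. Your CMM route is perfectly valid and more robust (it is what one needs for the $L^2(\partial D)\to L^2(\partial D)$ bounds elsewhere in this theory), but for this particular volume estimate it is heavier machinery than the statement requires.
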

We refer the reader to either \cite[Theorem 2.15 and Theorem 2.16]{ChanGLS2012} or \cite[Lemma 4.3]{Spence} for the proof.
Note that the results in Lemma \ref{lem-layerpotential} are independent of the boundary conditions of the obstacle. 

\begin{lemma}\label{lem-cauchydata}
    Let $u\in H^1_{loc}(\R^2\backslash \overline{D})$ be a radiating solution to the Helmholtz equation $\Delta u+k^2 u = 0$ in $\R^2\backslash \overline{D}$ subject to the impedance boundary condition
\ben
\frac{\pa u}{\pa \nu}+ik\la u=g
\enn 
where $\la\in L^\infty(\partial D), \inf \la >0$ and $g\in L^2(\partial D)$. Then we have the following estimates:
\be\label{eq-lem-cauchy}
\left\|\frac{\pa u}{\pa \nu}\right\|_{L^2(\partial D)}\leq \left(1+\frac{\sup \la}{\inf \la}\right)\|g\|_{L^2(\partial D)}
\text{ and }\ 
\|u\|_{L^2(\partial D)}\leq \frac{1}{k\inf \la}\|g\|_{L^2(\partial D)}
\en
\end{lemma}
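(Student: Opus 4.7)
The plan is to derive both estimates from a single testing of the impedance boundary condition against $\bar u$, combined with the standard radiation identity for the exterior Helmholtz problem.

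First I would multiply the boundary condition ${\pa u}/{\pa\nu}+ik\lambda u=g$ by $\bar u$ and integrate over $\partial D$, yielding
\begin{equation*}
\int_{\partial D} g\,\bar u\,ds = \int_{\partial D}\bar u\frac{\pa u}{\pa\nu}\,ds + ik\int_{\partial D}\lambda |u|^2\,ds.
\end{equation*}
Taking imaginary parts gives
\begin{equation*}
k\int_{\partial D}\lambda|u|^2\,ds = \mathrm{Im}\int_{\partial D} g\,\bar u\,ds - \mathrm{Im}\int_{\partial D}\bar u\frac{\pa u}{\pa\nu}\,ds.
\end{equation*}

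The second ingredient is the standard non-negativity $\mathrm{Im}\int_{\partial D}\bar u\,\pa_\nu u\,ds\ge 0$ for radiating solutions. To obtain this I would apply Green's first identity to $u$ on $B_R\setminus\overline D$, take imaginary parts (the volume term is real since $\Delta u=-k^2 u$), and pass $R\to\infty$. The Sommerfeld radiation condition together with the $L^2(\partial B_R)$ identity $\mathrm{Im}\int_{\partial B_R}\bar u\,\pa_r u\,ds \to k\int_{\mathbb S^1}|u^\infty|^2\,d\hat x \ge 0$ then delivers the claim. Care is needed here because the boundary is only Lipschitz, but for $g\in L^2(\partial D)$ the trace $u|_{\partial D}$ lies in $H^{1/2}(\partial D)$ and $\pa_\nu u\in L^2(\partial D)$, so Green's identity and the pairings above are justified in the standard Lipschitz sense.

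Combining the two facts and using $\lambda\ge\inf\lambda>0$ and the Cauchy--Schwarz inequality,
\begin{equation*}
k(\inf\lambda)\,\|u\|_{L^2(\partial D)}^2 \;\le\; k\int_{\partial D}\lambda|u|^2\,ds \;\le\; \|g\|_{L^2(\partial D)}\,\|u\|_{L^2(\partial D)},
\end{equation*}
which immediately gives the second stated inequality $\|u\|_{L^2(\partial D)}\le (k\inf\lambda)^{-1}\|g\|_{L^2(\partial D)}$.

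For the first estimate I would simply isolate the normal derivative from the boundary condition, ${\pa u}/{\pa\nu}=g-ik\lambda u$, take $L^2(\partial D)$ norms, apply the triangle inequality and the bound $\|\lambda u\|_{L^2(\partial D)}\le(\sup\lambda)\|u\|_{L^2(\partial D)}$, and insert the previously proved bound on $\|u\|_{L^2(\partial D)}$ to cancel the factor of $k$. The main (and essentially only) obstacle is the first step: justifying the radiation identity rigorously at the Lipschitz boundary, which requires verifying that the boundary pairings make sense in the appropriate duality and that the radiation-condition limit on $\partial B_R$ produces the correct non-negative far-field term. Once this is in place, the rest is algebraic.
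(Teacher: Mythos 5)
Your proposal is correct and follows essentially the same route as the paper: both arguments rest on the Rellich-type identity $\mathrm{Im}\int_{\partial D}\bar u\,\partial_\nu u\,ds = k\|u^\infty\|^2_{L^2(\mathbb S^1)}\ge 0$ obtained from Green's identity and the radiation condition, then substitute the impedance condition, take imaginary parts, and apply Cauchy--Schwarz for the $L^2(\partial D)$ bound on $u$, with the normal-derivative bound following from the triangle inequality. The only cosmetic difference is that you test the boundary condition on $\partial D$ first and invoke the radiation identity separately, whereas the paper derives both in one pass over $B_R\setminus\overline D$.
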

\begin{proof}
By Green's identity, we obtain
\be
\begin{aligned}
    0&= \int_{B_R\backslash\overline{D}}\ov{u}(\Delta + k^2) u dy\\
    &=k^2\|u\|^2_{L^2(B_R\backslash\overline{D})} - \|\nabla u\|^2_{L^2(B_R\backslash\overline{D})} + \int_{\pa B_R}\frac{\pa u}{\pa \nu} \ov{u} ds(y)-\int_{\partial D}\frac{\pa u}{\pa \nu} \ov{u} ds(y).
\end{aligned}
\en
Taking the imaginary part of both sides and letting $R\to\infty$, it turns out that
\be\label{eq-rellich}
\mathfrak{Im}\int_{\partial D}\frac{\pa u}{\pa \nu} \ov{u} ds(y)
 = \lim_{R\to\infty} \mathfrak{Im}\int_{\pa B_R}\frac{\pa u}{\pa \nu} \ov{u} ds(y)
 = k\|u^\infty\|^2_{L^2(S^1)}>0
\en
by the radiation condition \eqref{Srcobstacle}. Substituting the impedance boundary condition into \eqref{eq-rellich}, we get
\be
-k\int_{\partial D} \la(y) |u(y)|^2ds(y) + \mathfrak{Im} \int_{\partial D} g \ov{u}>0.
\en
Applying the Cauchy-Schwarz inequality, we obtain
\be
\|g\|_{L^2({\partial D})}\|u\|_{L^2({\partial D})} \geq k\inf \la \|u\|^2_{L^2({\partial D})},
\en
which proves the second inequality in \eqref{eq-lem-cauchy}.
The first inequality follows directly from the boundary condition and the triangle inequality.
\end{proof}

We are ready to prove Lemma \ref{lem-impe}.

\begin{proof}[Proof of Lemma \ref{lem-impe}]
By Green's representation formula \cite[Theorem 2.5]{CK}, the solution $u$ can be expressed as
\ben
u = -S_k \frac{\pa u}{\pa \nu} + D_k u, \text{ in } \R^2\backslash \overline{D}.
\enn
Applying the operator norm estimates from Lemma \ref{lem-layerpotential} and the Cauchy data estimates from Lemma \ref{lem-cauchydata}, we obtain the following estimate:
\ben
\begin{aligned}
\|u\|_{L^2(B_R\backslash \overline{D})} &\leq \left\|S_k \frac{\pa u}{\pa \nu}\right\|_{L^2(B_R\backslash \overline{D})} +\|D_k u\|_{L^2(B_R\backslash \overline{D})}\\
&\lesssim k^{-\frac{1}{2}} \left\|\frac{\pa u}{\pa \nu}\right\|_{L^2(\partial D)} + k^{\frac{1}{2}}\|u\|_{L^2(\partial D)}\\
&\leq k^{-\frac{1}{2}}\left(1+\frac{\sup \la+1}{\inf \la}\right)\|g\|_{L^2(\partial D)}.
\end{aligned}
\enn 
This completes the proof of the lemma.
\end{proof}

Theorem \ref{Asymp-farfield} holds universally for all boundary conditions in both two- and three-dimensional settings.

\subsection{Uniqueness results}

Based on Theorem \ref{Asymp-farfield}, we establish the following uniqueness theorem using multi-frequency data in sparse directions.
\begin{theorem}\label{UniqObs}
Let $D\subset \mathbb{R}^n, n=2,3$ be a smooth, strictly convex obstacle with an arbitrary boundary condition given by \eqref{Bc}. Consider an incident-observation direction pair $(\hx, \hth)$ where $\hx\neq \theta$, and assume the reflection coefficient $R^\lambda\neq 0$, i.e., $\lambda(y^+)\neq -\phi\cdot \hx$. Then the following results hold:
    \begin{itemize}
        \item[1.] The line $\{y\in \R^n|y\cdot(\hx-\hth)=y^+\cdot(\hx-\hth)\}$ and the combined parameter $\kappa^{-1/2}(y^+) R^\lambda(\hx, \theta)$ are determined by the far field data $\{u^\infty(\hx, \theta, k)| k\in(0,\infty)\}$.
        \item[2.] Both the curvature $\kappa(y^+)$ and impedance parameter $\lambda(y^+)$ at the reflection point $y^+$ are determined by $\{u^\infty(\hx_j, \theta_j, k)| k\in(0,\infty), j=1,2\}$ for $(\hx_j,\theta_j), j=1,2$ such that $-\phi(\hx_1,\theta_1)=-\phi(\hx_2,\theta_2)$ is the normal vector at $y^+$.
    \end{itemize} 
\end{theorem}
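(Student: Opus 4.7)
The proof is driven by extracting the two pieces of information carried by the leading term in Majda's asymptotic expansion (Theorem \ref{Asymp-farfield}), namely
\[
    u^\infty(\hx,\theta,k) = e^{iky^+\cdot(\hx-\theta)}\,\ka^{-1/2}(y^+)\,R^\la(\hx,\theta) + \mathcal{O}(1/k),\qquad k\to\infty.
\]
The dominant term is a single complex sinusoid in $k$ whose frequency encodes the scalar $s^{*}:=y^+\cdot(\hx-\theta)$ and whose complex amplitude is precisely the combined parameter $\ka^{-1/2}(y^+)R^\la(\hx,\theta)$. Both parts of the theorem amount to reading off these two pieces of data.

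For Part 1, the plan is to \emph{demodulate} the far field against a trial phase: for any candidate $s\in\R$, set
\[
    F_s(k):=u^\infty(\hx,\theta,k)\,e^{-iks},
\]
so that $F_s(k) = e^{ik(s^{*}-s)}\,\ka^{-1/2}(y^+)R^\la(\hx,\theta) + \mathcal{O}(1/k)$. If $s=s^{*}$ then $F_s(k)\to \ka^{-1/2}(y^+)R^\la(\hx,\theta)$, while for $s\neq s^{*}$ the prefactor $e^{ik(s^{*}-s)}$ oscillates and $F_s(k)$ has no limit. Since $R^\la\neq 0$ by hypothesis, $s^{*}$ is the unique real number for which $\lim_{k\to\infty}F_s(k)$ exists and is nonzero. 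The affine hyperplane $\{y:y\cdot(\hx-\theta)=s^{*}\}$ (equivalently $y\cdot(\hx-\hth)=s^{*}$) is thereby determined, and the limit itself delivers the combined parameter $\ka^{-1/2}(y^+)R^\la(\hx,\theta)$.

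For Part 2, I apply Part 1 to each of the two pairs to obtain the known scalars $\al_j:=\ka^{-1/2}(y^+)R^\la(\hx_j,\theta_j)$, $j=1,2$. Because the two pairs share the same reflection normal, $\phi(\hx_1,\theta_1)=\phi(\hx_2,\theta_2)=:\phi=-\nu(y^+)$. Writing the unknowns $\mu:=\ka^{-1/2}(y^+)$ and $\La:=\la(y^+)$, and the known quantities $c_j:=\phi\cdot\hx_j$, $a_j:=|\hx_j-\theta_j|$, $\gamma_j:=-\al_j\,a_j^{(n-1)/2}/c_j$, the formula for $R^\la$ reduces the problem to
\[
    \gamma_j \;=\; \mu\,\frac{\La+c_j}{\La-c_j},\qquad j=1,2.
\]
Eliminating $\mu$ from the ratio $\gamma_1/\gamma_2$ and clearing denominators yields a quadratic in $\La$ whose product of roots equals $-c_1 c_2$. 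Since $\theta_j\neq\hx_j$ forces $c_j=(\theta_j\cdot\hx_j-1)/a_j<0$, we have $-c_1 c_2<0$, so the quadratic has exactly one positive and one negative real root. The positivity assumption $\la(y^+)>0$ selects $\La$ uniquely, after which either of the two equations recovers $\mu$, hence $\ka(y^+)$.

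The delicate point in Part 1 is the non-vanishing hypothesis $R^\la\neq 0$, i.e.\ $\la(y^+)\neq-\phi\cdot\hx$, without which the leading-order amplitude disappears and neither the phase $s^{*}$ nor the amplitude can be separated from the $\mathcal{O}(1/k)$ remainder. The main obstacle in Part 2 is ensuring the $2\times 2$ system is non-degenerate: it collapses exactly when $c_1=c_2$, which corresponds to $\hx_1$ and $\hx_2$ being mirror images across the tangent hyperplane at $y^+$, a codimension-one configuration that generic choices of the backscattering pairs will avoid.
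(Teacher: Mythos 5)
Your proposal follows essentially the same route as the paper. Part 1 is a demodulation argument that reads off the phase $y^+\cdot(\hx-\theta)$ and the complex amplitude $\kappa^{-1/2}(y^+)R^\lambda(\hx,\theta)$ from the leading sinusoid in Majda's expansion; the paper kills the off-resonance oscillation with the average $\frac1K\int_K^{2K}u^\infty(\hx,\theta,k)e^{-ikt}\,dk$ rather than your pointwise limit of $u^\infty e^{-iks}$, but the two are interchangeable here precisely because $R^\lambda\neq0$. Part 2 likewise matches the paper: the ratio of the two amplitudes cancels $\kappa^{-1/2}$ and reduces to a quadratic in $\lambda$ with constant term $-\phi_1\cdot\hx_1\,\phi_2\cdot\hx_2<0$ (since $\phi_j\cdot\hx_j=(\theta_j\cdot\hx_j-1)/|\hx_j-\theta_j|<0$), so exactly one root is positive. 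The one place your write-up falls short of the theorem's claim of an \emph{arbitrary} boundary condition is the Dirichlet/Neumann endpoints: there $\gamma_1=\gamma_2$ (equal to $\mu$ resp.\ $-\mu$), the leading coefficient of your quadratic vanishes, and the ``product of roots'' argument is vacuous. The paper handles this case separately: $L=\gamma_1/\gamma_2=1$ characterizes $\lambda\in\{0,\infty\}$ (assuming $\phi_1\cdot\hx_1\neq\phi_2\cdot\hx_2$), and the sign of the recovered amplitude --- positive for Dirichlet, negative for Neumann, because $\phi\cdot\hx<0$ --- then decides between the two; you should add this one line to cover the full statement. On the other hand, your explicit flagging of the degeneracy $c_1=c_2$ (equivalently $|\hx_1-\theta_1|=|\hx_2-\theta_2|$, the mirror-image configuration), in which the ratio carries no information about $\lambda$, makes explicit a nondegeneracy condition that the paper uses but never states.
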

\begin{proof}
\begin{itemize}
    \item[1.] By \eqref{thm-majda} in Theorem \ref{Asymp-farfield},
\begin{equation}\label{eq-asym-int}
\lim_{K\to\infty} \frac{1}{K}\int_K^{2K} u^\infty(\hx, \theta, k)e^{-ikt}dk=
\left\{
\begin{aligned}
    &0, &\text{if }t\neq y^+\cdot(\hx-\theta);\\
    &\kappa^{-1/2}(y^+) R^\lambda(\hx, \theta), &\text{if }t= y^+\cdot(\hx-\theta).
\end{aligned}
\right.
\end{equation}
Obviously, $y^+\cdot(\hx-\theta)$ and $\kappa^{-1/2}(y^+) R^\lambda(\hx, \theta)$ are determined if $R^\lambda(\hx, \theta)\neq 0$, which is satisfied by the assumption. Hence, the line $\{y\in \R^n|y\cdot(\hx-\hth)=y^+\cdot(\hx-\hth)\}$ is determined.
    \item[2.] It suffices to prove that $\la(y^+)$ is determined by
\begin{equation*}
\kappa^{-1/2}(y^+) R^\lambda(\hx_j, \theta_j),\quad  j=1,2	
\end{equation*}
First, $\la(y^+)\neq 0$ if $\kappa^{-1/2}(y^+) R^\lambda(\hx_j, \theta_j)>0$ and $\la(y^+)\neq \infty$ if $\kappa^{-1/2}(y^+) R^\lambda(\hx_j, \theta_j)<0$.
By the assumptions on $\la(y^+)$, we can define the quotient
\begin{equation*}
	L:=\frac{-|\hx_2-\theta_2|^{-\frac{n-1}{2}} \phi_2\cdot \hx_2}{-|\hx_1-\theta_1|^{-\frac{n-1}{2}} \phi_1\cdot \hx_1}\frac{R^\lambda(\hx_1, \theta_1)}{R^\lambda(\hx_2, \theta_2)}=
\frac{\left(\la(y^+) + \phi_1\cdot \hx_1\right)\left(\la(y^+) - \phi_2\cdot \hx_2\right)}{\left(\la(y^+) - \phi_1\cdot \hx_1\right)\left(\la(y^+) + \phi_2\cdot \hx_2\right)},
\end{equation*}
where $\phi_j=\phi(\hx_j, \theta_j), j=1,2$. If $\la(y^+) =0$ or $\infty$, then $L=1$. Conversely, if $L=1$ and $\lambda\neq \infty$, then $\lambda=0$. 
In consequence, $\la(y^+)$ is determined if the boundary condition is Dirichlet or Neumann.

For the impedance case, $\la(y^+)$ is the solution of a quadratic equation
\begin{equation*}
\lambda^2 + \frac{L+1}{L-1}(\phi_2\cdot\hx_2-\phi_1\cdot\hx_1)\lambda - \phi_1\cdot\hx_1\phi_2\cdot\hx_2=0.
\end{equation*}
The left-hand side takes the value $-\phi_1\cdot\hx_1\phi_2\cdot\hx_2<0$ at $\lambda=0$ and goes to $+\infty$ as $|\lambda| \to \infty$. Therefore, the equation has exactly one negative solution and one positive solution, that is, $\la(y^+)$ is determined.
\end{itemize}
\end{proof}

\begin{remark}{\quad }

\begin{itemize}
    \item While the condition $\la(y^+)\neq -\phi\cdot \hx$ is fundamental in Theorem \ref{UniqObs},  the underlying arguments remain valid when supplementary data becomes available.
     Notably, this condition fails for a single direction pair. By incorporating far-field patterns $u^\infty(\hx^*,\theta^*, k)$ corresponding to an additional direction pair $(\hx^*,\theta^*)$ satisfying $-\phi(\hx^*,\theta^*)=\nu(y^+)$, the method can be successfully implemented without relying on his initial assumption. This indicates that the direction set $\mathcal{A}_2$ can reliably identify the obstacle without requiring prior knowledge of $\la$.
    \item In the case when $\lambda(y^+) = -\phi\cdot\hat{x}$, equation \eqref{eq-asym-int} produces zero for all $t\in \mathbb{R}$ - a result that differs markedly from the case when $\lambda(y^+)\neq -\phi\cdot\hat{x}$. Consequently, we can still establish that $\kappa^{-1/2}(y^+) R^\lambda(\hat{x}, \theta) = 0$ under these conditions.
    \item For strictly convex obstacles ($\kappa > 0$), the boundary condition can be partially classified based on the sign of $\kappa^{-1/2}(y^+) R^\lambda(\hat{x}, \theta)$:
\begin{itemize}
    \item When $\kappa^{-1/2}(y^+) R^\lambda(\hat{x}, \theta) \geq 0$, the boundary must be either Dirichlet or impedance with $\lambda(y^+)\geq\phi\cdot\hat{x}$.
    \item When $\kappa^{-1/2}(y^+) R^\lambda(\hat{x}, \theta) \leq 0$, the boundary must be either Neumann or impedance with $\lambda(y^+)\leq\phi\cdot\hat{x}$.
\end{itemize}
Thus, the sign of this quantity can eliminate either Dirichlet or Neumann boundary conditions for fixed directions $\hat{x}$ and $\theta$.
\end{itemize}
\end{remark}

In conclusion, the line tangent to the obstacle at $y^+$ is determined except for the critical condition $\la(y^+)=-\phi\cdot \hx$, in which the main term in the asymptotics \eqref{thm-majda} vanishes. Applying Theorem \ref{UniqObs} to the data set $\mathcal{A}_1$, we can derive the following uniqueness results. 

\begin{corollary} Let $D$ and $\lambda$ satisfy the assumptions in Theorem \ref{UniqObs}. Given far field data $\{u^\infty(\hx, \theta, k)|(\hx, \theta)\in \mathcal{A}_1, k\in(k_-, k^+)\}$, the following results hold:
\begin{itemize}
    \item[1.] The $\mathcal{A}_1$-hull of $D$, defined as
\begin{equation}
D_{\mathcal{A}_1}:=\bigcap_{(\hat{x},\theta)\in\mathcal{A}_1}\left\{ z\in \R^n|z\cdot\phi(\hx, \theta)> y^+\cdot\phi(\hx, \theta)
\right\},
\end{equation}
is uniquely determined.
    \item[2.] For the case of infinitely many measurement directions $(l=\infty)$, both the obstacle boundary $\partial D$ and its boundary condition are uniquely determined.
\end{itemize}
\end{corollary}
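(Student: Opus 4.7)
My starting observation is that for each fixed direction pair, $u^\infty(\hat x,\theta,\cdot)$ is real-analytic in $k$ on $(0,\infty)$ (the direct exterior problem has a unique radiating solution depending analytically on the wavenumber), so the data on the finite band $(k_-,k^+)$ extend uniquely by analytic continuation to all $k>0$, and I am free to invoke Theorem~\ref{UniqObs} at every direction pair. For each $(\hat x,\theta)\in\mathcal{A}_1$ with $\hat x\neq\theta$ and $R^\lambda(\hat x,\theta)\neq 0$, Theorem~\ref{UniqObs} delivers the scalar $y^+\cdot(\hat x-\theta)$, i.e.\ the tangent hyperplane to $\partial D$ at the reflection point $y^+$; equivalently, since $\phi(\hat x,\theta)=-\nu(y^+)$ is the inward normal at $y^+$, the supporting half-space $\{z:z\cdot\phi>y^+\cdot\phi\}$ of $D$ is determined. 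Intersecting these half-spaces over $\mathcal{A}_1$ is, by the very definition of $D_{\mathcal{A}_1}$, the $\mathcal{A}_1$-hull, which proves part~(1) modulo the handling of critical pairs with $R^\lambda=0$: there the data supply only the vanishing of the coefficient, but continuity of $\theta\mapsto y^+$ and $\theta\mapsto\phi$ together with boundedness of $\lambda$ confine the critical set to a nowhere-dense subset of $\mathcal{A}_1$, so the missing tangent half-spaces are filled in as limits of non-critical neighbours.

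For part~(2), I take $\Theta_\infty$ to be dense in $\mathbb{S}^{n-1}$ (a hypothesis that is implicit here and that holds, in particular, for $Q=-I$, where $\phi=\theta$), so that the outward normals $-\phi(\hat x,\theta)$ are dense in $\mathbb{S}^{n-1}$. Because $D$ is strictly convex with smooth boundary, its Gauss map is a diffeomorphism and $\overline D$ coincides with the intersection of all its supporting half-spaces; this together with part~(1) and continuity of the Gauss map gives $D_{\mathcal{A}_1}=\overline D$, and hence $\partial D$ is identified. Once $\partial D$ is known, the curvature $\kappa(y^+)$ and the correspondence $(\hat x,\theta)\mapsto y^+$ become computable, so the determined product $\kappa^{-1/2}(y^+)R^\lambda(\hat x,\theta)$ yields the reflection coefficient $R^\lambda(\hat x,\theta)$ itself. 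Since $\lambda\mapsto R^\lambda(\hat x,\theta)$ is a M\"obius transformation on $[0,\infty]$ and therefore injective, the value $\lambda(y^+)\in[0,\infty]$ is pinned down pointwise on $\partial D$; this simultaneously identifies the type of boundary condition (Dirichlet if $\lambda\equiv\infty$, Neumann if $\lambda\equiv 0$, impedance otherwise) and, in the impedance case, the impedance profile itself.

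The principal obstruction I expect is a uniform treatment of the critical locus $\{(\hat x,\theta):\lambda(y^+)=-\phi\cdot\hat x\}$, on which Majda's leading asymptotic term vanishes and Theorem~\ref{UniqObs} returns no tangent line. The continuity/density argument sketched above should be sufficient; everything else is essentially routine bookkeeping of the information delivered by Theorem~\ref{UniqObs} across direction pairs, combined with the elementary inversion of the M\"obius reflection coefficient.
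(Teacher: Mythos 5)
Your argument is correct under your stated hypotheses and reaches the same conclusions, but part 2 follows a genuinely different route from the paper. The paper does not assume that $\Theta_\infty$ is dense in $\Sp^{n-1}$: it invokes the real-analytic dependence of $u^\infty(Qd,d,k)$ on the incident direction $d$ (citing Griesmaier--Hyv\"onen--Seiskari), so that an arbitrary infinite direction set, which necessarily accumulates on the compact sphere, already determines the backscattering data for \emph{all} directions; the full family of supporting half-spaces then recovers the convex hull of $D$, which equals $D$ by convexity. Your density hypothesis plus continuity of the Gauss map buys a more elementary, purely geometric argument that needs no analyticity in the angular variable, at the price of an extra assumption on $\Theta_\infty$ that the paper deliberately avoids. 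For the recovery of $\lambda$ the two proofs coincide: once $\partial D$ and hence $\kappa$ are known, the combined parameter yields $R^\lambda$, and the M\"obius map $\lambda\mapsto R^\lambda$ is inverted pointwise. Your explicit analytic continuation in $k$ from $(k_-,k^+)$ to $(0,\infty)$ addresses a step the paper leaves implicit and is a welcome addition.

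One caveat: your treatment of the critical locus $\lambda(y^+)=-\phi\cdot\hx$ overreaches. The claim that continuity of $y^+$ and $\phi$ together with boundedness of $\lambda$ confine this set to a nowhere-dense subset of $\mathcal{A}_1$ is false in general: for classical backscattering ($Q=-I$) the critical condition reads $\lambda(y^+)=1$, which can hold on an open patch of $\partial D$ (indeed identically), in which case the missing tangent planes cannot be filled in as limits of non-critical neighbours. This does not invalidate your proof of the corollary as stated, since the hypothesis that $D$ and $\lambda$ satisfy the assumptions of Theorem \ref{UniqObs} already excludes $R^\lambda=0$; but the continuity/density argument should not be presented as a general repair of the critical case. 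The paper's own remark handles that case instead by enlarging the data set to $\mathcal{A}_2$.
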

\begin{proof}
\begin{itemize}
    \item[1.] This is a direct consequence of Theorem \ref{UniqObs}. 
    \item[2.] The generalized backscattering data $u^\infty(Qd, d,k)$ exhibits analytic dependence on the incident direction $d\in S^{n-1}$, as established in \cite{GriesHS2013}. This analytic property demonstrates the equivalence between our backscattering measurements and full aperture backscattering data. Consequently, we can uniquely determine the convex hull of $D$, which coincides with $D$ itself.
    With the determined obstacle $D$, the Gauss curvature $\ka(x)$ at all boundary points $x\in \pa D$ becomes uniquely specified. This consequently leads to the determination of $R^\la(x)$ through Theorem \ref{UniqObs}, from which the function $\la(x)$ naturally follows.
\end{itemize}
\end{proof}

\section{Numerical Algorithm}\label{sec-Numeric}
This section is devoted to introducing the reconstruction algorithm. For simplicity, we describe our algorithm for the two-dimensional case. The three-dimensional case can be obtained by proper modification.
The far field patterns are obtained via the boundary integral equation method \cite{CK,KR95}.
We further perturb these synthetic data with relative random noise as follows:
 \begin{align}
 \label{dataerror}
     u^{\infty,\delta}(\hx,\theta,\omega)=u^{\infty}(\hx,\theta,\omega)\Big(1+\delta\big[X_{\theta,\hat{x},\omega}+iY_{\theta,\hat{x},\omega}\big]\Big),
 \end{align}
 where $X_{\theta,\hat{x},\omega},Y_{\theta,\hat{x},\omega}\sim N(0,1)$ are independent random variables, $N(0,1)$ is the normal distribution with mean zero and standard deviation of one. If not stated otherwise, we take $\delta=0.1$.
 
\subsection{Determine the boundary condition}
In this subsection, we introduce two methods to determine the boundary conditions. 

\subsubsection{Determine the boundary condition without knowing $\pa D$}
We begin with a method for computing the impedance coefficient $\lambda$ without knowing the boundary $\partial D$. To do so, we use the multi-frequency far field patterns with direction set $\mathcal{A}_2$.
The theoretical basis is established in the following theorem.
\begin{theorem}\label{determine-lambda-noD}
For every $\hx_0\in\mathbb S^{n-1}$, we select three pairs $(\hx_j,\theta_j)$ satisfying $-\phi(\hx_j,\theta_j)=\hx_0,\ j=1,2,3$. Using $\left\{|u^{\infty}(\hx_j,\theta_j,k)|\Big|k\in(k_-, k^+),j=1,2,3\right\}$,
we can distinguish between the following two cases:
\begin{itemize}
    \item The scatterer $D$ is either sound-soft or sound-hard.
    \item The scatterer $D$ is of impedance type, in which case  $\lambda\left(\mathcal{G}(-\hx_0,\hx_0)\right)$ can be additionally determined.
\end{itemize}
\end{theorem}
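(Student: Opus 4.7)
The plan is to apply Majda's asymptotic \eqref{thm-majda} after passing to moduli, thereby bypassing the unknown phase. The three pairs $(\hat x_j,\theta_j)$ all satisfy $-\phi_j = \hat x_0$, so by the definition of $\mathcal G$ they share a single reflection point $y^+\in\partial D$ at which the common values $\kappa(y^+)$ and $\lambda(y^+)$ govern the asymptotics. Taking $|\cdot|$ in Theorem \ref{Asymp-farfield} eliminates the oscillatory factor $e^{iky^+\cdot(\hat x_j-\theta_j)}$ and gives
\[
|u^\infty(\hat x_j,\theta_j,k)| \;=\; \kappa^{-1/2}(y^+)\, B_j\, F_j\bigl(\lambda(y^+)\bigr) + O(k^{-1}),\quad k\to\infty,
\]
where $\alpha_j := \phi_j\cdot\hat x_j$, $B_j := |\alpha_j|\,|\hat x_j-\theta_j|^{-(n-1)/2}$ is an explicit constant depending only on the directions, and $F_j(\lambda) := \bigl|(\lambda+\alpha_j)/(\lambda-\alpha_j)\bigr|$ with the convention $F_j \equiv 1$ when $\lambda \in \{0,\infty\}$. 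I would then extract the three limit constants $A_j := \kappa^{-1/2}(y^+)B_j F_j(\lambda(y^+))$ from the modulus data by a Ces\`aro-type average in $k$ analogous to the one in \eqref{eq-asym-int}.

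The classification of the boundary condition rests on the three normalized quantities $A_j/B_j$. In the Dirichlet or Neumann case $F_j \equiv 1$, so $A_1/B_1 = A_2/B_2 = A_3/B_3 = \kappa^{-1/2}(y^+)$. Conversely, for a genuine impedance $0 < \lambda < \infty$, a short computation shows that $F_i(\lambda) = F_j(\lambda)$ forces either $\alpha_i = \alpha_j$ or $\lambda^2 = \alpha_i\alpha_j$. By choosing the three directions $\hat x_j$ so that $\alpha_1,\alpha_2,\alpha_3$ are pairwise distinct (always possible, since $\alpha_j = -\hat x_0\cdot\hat x_j$ varies freely with $\hat x_j$), the conditions $\lambda^2 = \alpha_i\alpha_j$ cannot all hold at a single $\lambda$. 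Hence equality of all three ratios $A_j/B_j$ characterizes the sound-soft/sound-hard case, and any discrepancy places us in the impedance case.

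In the impedance case, $\lambda(y^+)$ is recovered from any ratio $F_i(\lambda)/F_j(\lambda) = (A_i/B_i)/(A_j/B_j)$, which after squaring and clearing denominators reduces to a polynomial equation of degree at most two in $\lambda$; the positivity constraint $\lambda > 0$ and the redundancy provided by the third pair single out the correct root. The hardest step is not the algebra but the first one: strictly, \eqref{thm-majda} is a $k\to\infty$ identity while the data live in the bounded band $(k_-, k^+)$, so extracting $A_j$ cleanly demands either a Ces\`aro-type average on a large enough subinterval of that band, in the spirit of \eqref{eq-asym-int}, or a quantitative bound showing the $O(1/k)$ remainder in Majda's formula is dominated by the leading term at the available frequencies. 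A secondary concern is handling the critical values $\lambda = -\alpha_j$ at which $R^\lambda$ vanishes for a single pair; since at most one of the three $\alpha_j$'s can realize this, the argument simply proceeds with the remaining two indices.
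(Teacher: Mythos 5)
Your approach is essentially the paper's: take moduli of Majda's asymptotics so that the unknown phase factor and the common curvature factor $\kappa^{-1/2}(y^+)$ drop out of suitably normalized ratios, classify the boundary condition by whether those ratios are all trivial, and recover $\la(y^+)$ from two independent ratio equations plus the redundancy of the third direction. The classification half is sound, and your computation that $F_i(\la)=F_j(\la)$ forces $\alpha_i=\alpha_j$ or $\la^2=\alpha_i\alpha_j$ is exactly the mechanism behind the paper's quotient $L_j(\hx_0)$.

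The gap is in the recovery step. Because only $|u^\infty|$ is available, each ratio equation $F_i(\la)/F_j(\la)=C$ splits into two sign branches; each branch is a quadratic whose constant term $-\alpha_i\alpha_j$ is negative (both $\alpha$'s are negative since $\phi_j\cdot\hx_j<0$), so each branch has exactly one positive root. Hence each of the two independent equations yields \emph{two} positive candidates --- the true $\la(y^+)$ and a spurious root $\la_j^*$ --- not ``a polynomial equation of degree at most two'' whose correct root is selected by positivity alone. The entire content of the determination is the claim that the third direction disambiguates, i.e.\ that $\la_2^*\neq\la_3^*$, so that $\la(y^+)$ is the unique value common to both candidate pairs. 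You assert this (``the redundancy provided by the third pair single[s] out the correct root'') but do not prove it, and it is not automatic. The paper closes exactly this point by contradiction: if $\la_2^*=\la_3^*=\la^*$ satisfied both wrong-sign equations, then the equation built from the quotient $L_2/L_3$ would admit two distinct positive solutions, contradicting the one-positive-root structure of that quadratic. Without this (or an equivalent) argument, the recovery of $\la(y^+)$ is not established. A secondary point: in the critical case $\la=-\alpha_j$ you propose to ``proceed with the remaining two indices,'' but a single ratio equation cannot be disambiguated; the correct observation is that $A_j=0$ itself pins down $\la=-\alpha_j$ directly.
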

\begin{proof}
According to \eqref{thm-majda}, we define the following equations
     \begin{equation}\label{sovleLambda-noD}
     \begin{aligned}
         L_{j}(\hx_0):&=\lim_{k_-\to\infty} \frac{|\hx_1-\theta_1|^{-\frac{n-1}{2}} \hx_0\cdot \hx_1}{|\hx_j-\theta_j|^{-\frac{n-1}{2}} \hx_0\cdot \hx_j}\frac{\int_{k_-}^{k^+} \left|u^\infty(\hx_j, \theta_j, k)\right|dk}{\int_{k_-}^{k^+} \left|u^\infty(\hx_1, \theta_1, k)\right|dk}\\
         &=\left|\frac{(\lambda - \hx_0\cdot \hx_j)(\lambda + \hx_0\cdot \hx_1)}{(\lambda + \hx_0\cdot \hx_j)(\lambda - \hx_0\cdot \hx_1)}\right|\quad j=2,3.
     \end{aligned}
     \end{equation}

Note that we regard $\lambda$ as $\infty$ and $0$ for the Dirichlet and Neumann boundary conditions. This implies that we know that $\mathcal{B}(u)=u\ {\rm or}\ \frac{\partial u}{\partial \nu}$ if $L_2(\hx_0)=L_3(\hx_0)=1,\ \forall\ \hx_0\in\mathbb S^{n-1}$.
 
Otherwise, the scatterer $D$ is of impedance type. Then, it is sufficient to consider the case that $\lambda\left(\mathcal{G}(-\hx_0,\hx_0)\right) - \hx_0\cdot \hx_1\neq 0$. Similar to the arguments in Theorem \ref{UniqObs}, for $j=2,3$ every equation \eqref{sovleLambda-noD} has two positive solutions, denoted by $\lambda\left(\mathcal{G}(-\hx_0,\hx_0)\right)$ and $\lambda^*_j$, respectively. Without loss of generality, we assume that
\begin{align*}
\frac{(\lambda\left(\mathcal{G}(-\hx_0,\hx_0)\right) - \hx_0\cdot \hx_j)(\lambda\left(\mathcal{G}(-\hx_0,\hx_0)\right) + \hx_0\cdot \hx_1)}{(\lambda\left(\mathcal{G}(-\hx_0,\hx_0)\right) + \hx_0\cdot \hx_j)(\lambda\left(\mathcal{G}(-\hx_0,\hx_0)\right) - \hx_0\cdot \hx_1)}
=L_j(\hx_0)\quad j=2,3.
\end{align*}
Suppose that $\lambda^*_2=\lambda^*_3=\lambda^*>0$ satisfies
\begin{align*}
    \frac{(\lambda^* - \hx_0\cdot \hx_j)(\lambda^* + \hx_0\cdot \hx_1)}{(\lambda^* + \hx_0\cdot \hx_j)(\lambda^* - \hx_0\cdot \hx_1)}=-L_j(\hx_0)\quad j=2,3.
\end{align*}
Then, the equation 
\begin{align*}
    \frac{(\lambda - \hx_0\cdot \hx_2)(\lambda + \hx_0\cdot \hx_3)}{(\lambda + \hx_0\cdot \hx_2)(\lambda - \hx_0\cdot \hx_3)}=\frac{L_2(\hx_0)}{L_3(\hx_0)}.
\end{align*}
has two positive solutions, which leads to a contradiction to the arguments in Theorem \ref{UniqObs}. Hence $\la_2^*\neq \la_3^*$. Then the impedance coefficient $\lambda\left(\mathcal{G}(-\hx_0,\hx_0)\right)$ can be obtained because it appears twice among the four above solutions. 
\end{proof}
Numerically, we take $\alpha_1=8$ and $\alpha_2=10$, set 
\begin{align*}
     \delta k:=\frac{k^+-k_-}{M} ,\quad\mbox{and}\quad     k_m:=k_-+m\delta k,\quad m=0,1,2,\cdots, M.
 \end{align*}
and compute
\begin{align}\label{sovle-lambda-num-noD}
    \mathcal{L}(\hx,\alpha_j):= \frac{|\hx-\theta|^{-\frac{1}{2}} \hx\cdot \hx}{|\hx_j-\theta_j|^{-\frac{1}{2}} \hx\cdot \hx_j}\frac{\sum\limits_{m=0}^M \left|u^{\infty,\delta}(\hx_j, \theta_j, k_m)\right|}{\sum\limits_{m=0}^M \left|u^{\infty,\delta}(\hx, \theta, k_m)\right|},
\end{align}
where $\hx=-\theta$, $\hx_j:=Q_{\alpha_j}\theta_j$ and $\theta_j:=R_{\alpha_j}\theta$
with
\be \label{QandR}
Q_{\alpha_j}:=
\left [
            \begin{array}{ll}
              -\cos{\frac{\alpha_j\pi}{16}}, & \quad \sin{\frac{\alpha_j\pi}{16}}  \\
               -\sin{\frac{\alpha_j\pi}{16}}, &  -\cos{\frac{\alpha_j\pi}{16}}
            \end{array}
          \right]
          \quad{\rm and}\quad 
          R_{\alpha_j}:=
          \left [
            \begin{array}{ll}
              \quad \cos{\frac{\alpha_j\pi}{32}}, & \sin{\frac{\alpha_j\pi}{32}}  \\
               -\sin{\frac{\alpha_j\pi}{32}}, &  \cos{\frac{\alpha_j\pi}{32}}
            \end{array}
          \right],\quad j=1,2.
\en

First, we regard the boundary condition as a Dirichlet or Neumann boundary condition if 
\begin{align}\label{judgement-num-DorN}
    |\mathcal{L}(-\theta,\alpha_j)-1|<\frac{\delta}{2}=0.05,\ \forall\theta\in\Theta_l, \,j=1\, ({\rm or}\, 2).
\end{align}
The threshold values established in \eqref{judgement-num-DorN} for the backscattering scenario induce a dual truncation of the impedance parameter at critical values $\lambda=12.06$ and $\lambda=0.06$. This formulation effectively establishes a classification criterion where impedance values below $0.06$ are treated as Neumann-type boundaries, while those exceeding $12.06$ are interpreted as Dirichlet-type boundaries. As demonstrated in Table  \ref{table-reasonable-lambda}, these carefully selected threshold values yield backscattering far-field data that deviates by approximately $2\%-10\%$ from pure Neumann or Dirichlet reference solutions. 
Therefore, criterion \eqref{judgement-num-DorN} provides a valid means to determine whether the boundary condition is of impedance type. 

\begin{table}[h]
    \centering
    \begin{tabular}{|c|c|c|c|c|c|}
    \hline
    $\lambda$&0\ (Neumann)&0.06&12.06&$\infty\ ({\rm Dirichlet})$\\
    \hline
    $u^{\infty}(d,d,20)$&  -3.3288 + 3.8856i&  -3.5254 + 3.9288i& -4.3081 + 3.6514i&-4.3184 + 3.6405i\\
    \hline
    $u^{\infty}(d,-d,20)$&  -0.8189 + 0.2814i&  -0.7255 + 0.2493i&   0.7007 - 0.2172i&0.8278 - 0.2555i\\
    \hline
    $u^{\infty}(d,d,50)$& -5.5900 + 6.0797i&-5.8112 + 6.1183i& -6.4356 + 5.8676i&  -6.4422 + 5.8608i\\
     \hline
    $u^{\infty}(d,-d,50)$& 0.6111 + 0.6135i&0.5418 + 0.5442i& -0.5109 - 0.5262i& -0.6030 - 0.6217i\\
    \hline
    \end{tabular}
    \caption{The noise-free far field pattern at $d=(1,0)^T$ for a disk $D=\{x\in\mathbb R^2, |x|\leq1.5\}$ with different boundary conditions.}
    \label{table-reasonable-lambda}
\end{table} 

Second, for the impedance boundary condition, combining \eqref{sovleLambda-noD} and \eqref{sovle-lambda-num-noD}, we can obtain two positive solutions for each $\hx$ and $\alpha_j$ by solving 
\begin{align}\label{sovlelambda-num-equation}
    \left|\frac{(\lambda - \hx\cdot \hx_j)(\lambda + 1)}{(\lambda + \hx\cdot \hx_j)(\lambda - 1)}\right|=\mathcal{L}(\hx,\alpha_j).
\end{align}
 Specifically, for $j=1,2$, we have 
\begin{align*}
    \lambda_{\alpha_j,\hx}^{(1)}&:=\frac{1}{2}\left(\sqrt{\left[\frac{1+\mathcal{L}(\hx,\alpha_j)}{1-\mathcal{L}(\hx,\alpha_j)}(1-\hx\cdot\hx_j)\right]^2+4\hx\cdot\hx_j}-\frac{1+\mathcal{L}(\hx,\alpha_j)}{1-\mathcal{L}(\hx,\alpha_j)}(1-\hx\cdot\hx_j)\right),\\
    \lambda_{\alpha_j,\hx}^{(2)}&:=\frac{1}{2}\left(\sqrt{\left[\frac{1-\mathcal{L}(\hx,\alpha_j)}{1+\mathcal{L}(\hx,\alpha_j)}(1-\hx\cdot\hx_j)\right]^2+4\hx\cdot\hx_j}-\frac{1-\mathcal{L}(\hx,\alpha_j)}{1+\mathcal{L}(\hx,\alpha_j)}(1-\hx\cdot\hx_j)\right).
\end{align*}
With the help of Theorem \ref{determine-lambda-noD}, $\lambda(\mathcal{G}(-\hx,\hx))$ is approximated by 
\begin{align}\label{reconstruct-lambda}
    \Tilde{\lambda}(\hx):=\frac{1}{2}\left(\lambda_{\alpha_1,\hx}^{(\mathbbm{i})}+\lambda_{\alpha_2,\hx}^{(\mathbbm{j})}\right),
\end{align}
where
\begin{align*}
    (\mathbbm{i},\mathbbm{j})=\underset{(s,t)\in\{1,2\}\times\{1,2\}}{\arg\min}\frac{\left|\lambda_{\alpha_1,\hx}^{(s)}-\lambda_{\alpha_2,\hx}^{(t)}\right|}{\sqrt{\left(\lambda_{\alpha_1,\hx}^{(s)}\right)^2+\left(\lambda_{\alpha_2,\hx}^{(t)}\right)^2}}.
\end{align*}
Note that 
\begin{align}\label{stab-lambda-L}
\left|\frac{d\lambda_{\alpha_j,\hx}^{(1)}}{d\mathcal{L}}\right|&=\left|\frac{(1-\hx\cdot\hx_j)\frac{1+\mathcal{L}}{1-\mathcal{L}}}{\sqrt{[\frac{1+\mathcal{L}}{1-\mathcal{L}}(1-\hx\cdot\hx_j)]^2+4\hx\cdot\hx_j}}-1\right|\frac{(1-\hx\cdot\hx_j)}{(1-\mathcal{L})^2}
    \leq\frac{2(1-\hx\cdot\hx_j)}{(1-\mathcal{L})^2},
\end{align}
which implies that under the perturbation of noise, equation \eqref{sovlelambda-num-equation} remains stable when $\mathcal{L}$ is far from $1$, whereas the equation becomes ill-posed when $\mathcal{L}\approx1$ ($\mathcal{L}=1$ is equivalent to the case in which $\mathcal{B}u=u$ or $\mathcal{B}u={\partial u}/{\partial \nu}$). This phenomenon also indicates that we need to select a larger $\alpha_j$ to keep $\mathcal{L}$ away from $1$ (note also that too large $\alpha_j$ will strengthen the residual term in \eqref{thm-majda}, thereby making the algorithm less effective) and our criterion for determining the boundary conditions is
reasonable.

\subsubsection{Determine the boundary condition with given $\pa D$}
Given $\partial D$, following Theorem \ref{UniqObs}, one may compute the impedance coefficient $\lambda\left(\mathcal{G}(-\hx,\hx)\right)$ for $-\hx\in\Theta_l$ via data on the direction set $\mathcal{A}_1$. Specifically, for large $k_-$ and $k^+$, the impedance coefficient can be approximately computed by
\begin{align}\label{sovlelambda-num-NeD}
    \lambda\left(\mathcal{G}(-\hx,\hx)\right)\approx\frac{\mathcal{H}+1}{\mathcal{H}-1},
\end{align}
where
 \begin{align*}
     \mathcal{H}(\hx):=\frac{\sqrt{2\kappa(y^+)}}{M+1}\sum\limits_{m=0}^Mu^{\infty,\delta}(\hx,-\hx,k_m)e^{-2ik_my^+\cdot\hx}
 \end{align*}
 with $y^+=\mathcal{G}(-\hx,\hx)$. 

\subsection{Direct sampling methods for the boundary}
We introduce two direct sampling methods for shape reconstructions. Precisely, two indicators with and without boundary conditions, respectively, are proposed.

\subsubsection{Shape reconstruction based on the computed boundary condition}
Before introducing our direct sampling method, we may recall the Kirchhoff's approximation.
For any convex obstacle $D$, let
\be\label{illuminatedShadow}
\pa D_{-}(\theta):=\left\{x\in\pa D |\, \nu(x)\cdot \theta<0\right\} \quad\mbox{and}\quad \pa D_{+}(\theta):= \left\{x\in\pa D |\, \nu(x)\cdot \theta\geq0\right\}
\en
be the illuminated region and shadow region, respectively, with respect to the incident direction $\theta$.
For a large wave number $k$, i.e., for a small wavelength, an obstacle $D$ is locally almost a hyperplane with normal $\nu(x)$ at each point $x\in\pa D$. Consequently, the scattered field can be locally viewed as a reflected plane wave. For the near field, this implies that \cite{CK,LiLiu2015}
\be\label{Kirchhoff-soft}
u =  0\  \hbox{on $\pa D$,}\quad
\frac{\pa u}{\pa\nu} \approx \left\{
                            \begin{array}{ll}
                              2\frac{\pa u^{in}}{\pa\nu}  & \hbox{on $\pa D_{-}(\theta)$;} \\
                              0 & \hbox{on $\pa D_{+}(\theta)$}
                            \end{array}
                          \right.
\en
if $D$ is sound soft, and
\be\label{Kirchhoff-hard}
u \approx \left\{
            \begin{array}{ll}
              2 u^{in} & \hbox{on $\pa D_{-}(\theta)$;} \\
              0 & \hbox{on $\pa D_{+}(\theta)$.}
            \end{array}
          \right.\quad
\frac{\pa u}{\pa \nu} = 0 \,\hbox{on $\pa D$}
\en
if $D$ is sound hard. 

For the impedance case, under the same methodology, 
we assume that 
\begin{align*}
    u^s(y)\approx T(\theta,y)e^{ik\Tilde{\theta}(y)\cdot x},\ y\in\partial D_-,
\end{align*}
where $\Tilde{\theta}(y)\in\mathbb S^{n-1}$ such that $\nu(y)=\mathcal{G}(\theta,\Tilde{\theta})$.
In addition, ignoring the derivative of $T$ and $\Tilde{\theta}$, we approximate the boundary condition as follows
\begin{align*}
    ik\lambda(y)(e^{ikx\cdot \theta}+Te^{ik\Tilde{\theta}\cdot x})+ik(\nu(y)\cdot\theta e^{ikx\cdot \theta}+T\nu(y)\cdot\Tilde{\theta} e^{ikx\cdot \Tilde{\theta}})=0,\, \forall\ y\in \partial D_-.
\end{align*}
Note that $\nu(y)\cdot\theta=-\nu(y)\cdot\Tilde{\theta}(y)$, thus the scattered field has the following approximation
\be\label{Kirchhoff-imp}
u \approx \left\{
            \begin{array}{ll}
              u^{in}-\frac{\lambda+\nu\cdot \theta}{\lambda-\nu\cdot\theta} u^{in} & \hbox{on $\pa D_{-}(\theta)$;} \\
              0 & \hbox{on $\pa D_{+}(\theta)$,}
            \end{array}
          \right.
          \quad{\rm and}\quad
\frac{\pa u}{\pa \nu} \approx \left\{
            \begin{array}{ll}
              \frac{\pa u^{in}}{\pa \nu}+\frac{\lambda+\nu\cdot \theta}{\lambda-\nu\cdot\theta} \frac{\pa u^{in}}{\pa \nu} & \hbox{on $\pa D_{-}(\theta)$;} \\
             0 & \hbox{on $\pa D_{+}(\theta)$.}
            \end{array}
          \right.\quad
\en

Inspired by the famous Bojarski identity \cite{Bojarski}, we deduce that
\ben
\begin{aligned}
u^\infty(\hx, -\hx, k) &= \frac{C_n k^{(n-3)/2}}{2}\int_{\pa D} \left(\frac{\pa u}{\pa \nu} e^{-ik\hx\cdot y}-u\frac{\partial e^{-ik\hx\cdot y}}{\partial\nu} \right)ds(y) \\
&= \frac{C_n k^{(n-3)/2}}{2}\int_{\pa D} \left(-ik\lambda(y)+ik\hx\cdot\nu(y)\right)u e^{-ik\hx\cdot y} ds(y)\\
&\approx \frac{C_n k^{(n-3)/2}}{2}\int_{\pa D_-} \left(-ik\lambda(y)+ik\hx\cdot\nu(y)\right)\left(1-\frac{\lambda(y)-\nu(y)\cdot\hx}{\lambda(y)+\nu(y)\cdot\hx}\right)u^{in} e^{-ik\hx\cdot y} ds(y)\\
& = \frac{C_n k^{(n-3)/2}}{2}\int_{\pa D_-} \frac{\lambda(y)-\nu(y)\cdot\hx}{\lambda(y)+\nu(y)\cdot\hx}(-2ik\hx\cdot\nu(y))  e^{-2ik\hx\cdot y}ds(y)\\
&\approx\frac{C_n k^{(n-3)/2}}{2}\gamma(\hx)\int_{\pa D_-} \frac{e^{-2ik\hx\cdot y}}{\partial\nu}ds(y),\quad  \hx \in S^{n-1}.
\end{aligned}
\enn
 Here, $C_2 = -e^{i\frac{\pi}{4}}/\sqrt{2\pi}, C_3 = -1/2\pi$ and $\gamma(\hx)=\frac{\lambda\left(\mathcal{G}(-\hx,\hx)\right)-1}{\lambda\left(\mathcal{G}(-\hx,\hx)\right)+1}$. The final high-frequency approximation is derived by the stationary phase method (Theorem 7.7.5 in \cite{LH-SPM}). Consequently, by Green's theorem, we have
\ben
\begin{aligned}
\gamma(\hx)^{-1}u^\infty(\hx, -\hx, k)& + i^{3-n}\gamma(-\hx)^{-1}\ov{u^\infty(-\hx, \hx, k)} \\
&\approx \frac{C_n k^{(n-3)/2}}{2}\int_{\pa D} \frac{\pa  e^{-2ik\hx\cdot y}}{\pa \nu} ds(y)\\
&= \frac{C_n k^{(n-3)/2}}{2} \int_{D} \Delta e^{-2ik\hx\cdot y}dy\\
&=-2C_n  k^{(n+1)/2}\int_D e^{-2ik\hx\cdot y}dy.
\end{aligned}
\enn
We rewrite it as
\be\label{eq-Bojarski}
\int_{\mathbb{R}^n} \chi_D e^{-2ik\hx\cdot y}dy
\approx -\frac{\gamma(\hx)^{-1}u^\infty(\hx, -\hx, k) + i^{3-n}\gamma(-\hx)^{-1}\ov{u^\infty(-\hx, \hx, k)}}{2C_n  k^{(n+1)/2}}=: V(\hx, k).
\en
Here, $\chi_D$ is the characteristic function of the domain $D$. $V(\hx,k)$ can be viewed as the far field pattern of source $\chi_D$ in the direction $\hx$ and wave number $2k$. 

Motivated by the indicators in \cite{LiuShi} for the inverse source problem, we define the following indicator using the backscattering data:
\be\label{Iz-bj}
I(z):=\frac{1}{l}\sum_{ -\hx\in \Theta_l} I_{\hx}(z),
\en
where
\ben
I_{\hx}(z) &:=& \int_{k_-}^{k^+} k^{n-1} V(\hx, k)e^{2ik\hx\cdot z} dk\\
    &=&  -\int_{k_-}^{k^+}\frac{\gamma(\hx)^{-1}u^\infty(\hx, -\hx, k)e^{2ik\hx\cdot z} + i^{3-n}\gamma(-\hx)^{-1}\ov{u^\infty(-\hx, \hx, k)e^{-2ik\hx\cdot z}}}{2C_n  k^{(3-n)/2}} dk.
\enn
Letting $l$ tend to infinity, we find that
\begin{align*}
   I(z)&\approx\frac{1}{2^{n-1}\pi}\int_{\mathbb S^{n-1}} I_{\hx}(z)ds_{\hx}\\
   &\approx\frac{1}{2^{n-1}\pi}\int_{k_-<|\xi|<k^+}\int_{\mathbb{R}^n} \chi_D e^{-2i\xi\cdot y}dy\ e^{2i\xi\cdot z}d\xi\\
   &=\frac{1}{2}\int_{k_-<|\xi|<k^+}\mathcal{F}[\chi_D](2\xi)\ e^{2i\xi\cdot z}d\xi.
\end{align*}
This implies that $I(z)$ is again expected to capture the boundary $\pa D$ when $k_-$ is large because $\pa D$ carries the high-frequency Fourier pattern of $\chi_D$.

Numerically, we use the computed $\lambda$ in \eqref{reconstruct-lambda}, define 
\begin{equation}\label{num-set-gamma}
    \Tilde{\gamma}(\hx)=
    \left\{
    \begin{aligned}
        \frac{\Tilde{\lambda}(\hx)-1}{\Tilde{\lambda}(\hx)+1},&\ {\rm if}\ \eqref{judgement-num-DorN} \ {\rm is \ established},\\
        1\qquad\quad,&\ {\rm if}\ \eqref{judgement-num-DorN} \ {\rm is \ not\  established},
    \end{aligned}
    \right.
\end{equation}
and plot the following indicator function on the domain of interest,
\begin{align}\label{Iz-num-bj}
\mathcal{I}(z)=\frac{1}{l}\sum_{ -\hx\in \Theta_l} \mathcal{I}_{\hx}(z),
\end{align}
where 
\begin{align*}
\mathcal{I}_{\hx}(z) :=-\sum\limits_{m=0}^M\frac{\Tilde{\gamma}(\hx)^{-1}u^{\infty,\delta}(\hx, -\hx, k_m)e^{2ik_m\hx\cdot z} + i\Tilde{\gamma}(-\hx)^{-1}\ov{u^{\infty,\delta}(-\hx, \hx, k_m)e^{-2ik_m\hx\cdot z}}}{2C_n  k_m^{1/2}} \delta k.
\end{align*}
Comparing \eqref{Iz-bj} and \eqref{Iz-num-bj}, we derive that 
\begin{equation*}
   \mathcal{I}(z)\approx\left\{
   \begin{aligned}
        &I(z),\ {\rm for\ Dirichlet\ condition},\\
       -&I(z),\ {\rm for\ Neumann\ condition},\\
        &I(z),\ {\rm for\ Impedance\ condition},
   \end{aligned}\right.
\end{equation*}
which means that the indicator function $\mathcal{I}(z)$ is expected to catch the boundary $\partial D$. Moreover, $\mathcal{I}(z)$  can also be used to distinguish whether the boundary condition is the Dirichlet condition or the Neumann condition. We regard the boundary condition as
\begin{equation}\label{I+I-DN}
    \mathcal{B}u=\left\{
    \begin{aligned}
        u,&\ {\rm if}\  \mathcal{I}|_{\partial D^+}<0<\mathcal{I}|_{\partial D^-},\\
        \frac{\partial u}{\partial \nu},&\ {\rm if}\  \mathcal{I}|_{\partial D^+}>0>\mathcal{I}|_{\partial D^-}.
    \end{aligned}\right.
\end{equation}
Here,  $\partial D^{\pm}$ represents the outer and inner neighborhoods of $\partial D$. The explanation for criterion \eqref{I+I-DN} can be obtained from the Gibbs phenomenon. Gibbs phenomenon is a classic phenomenon in signal processing and Fourier analysis, mainly manifested as fixed amplitude overshoot peak and oscillation near discontinuous points when approximating discontinuous periodic signals with finite $M$ term Fourier series, details are shown in Figure \ref{gibbs}. Note that $I$ can be approximated by the difference between the $\lfloor k^+\rfloor$-term  Fourier series of $\chi_D$ and the $\lfloor k_-\rfloor$-term  Fourier series of $\chi_D$.
\begin{figure}[htbp]
    \centering
    \includegraphics[width=0.5\linewidth]{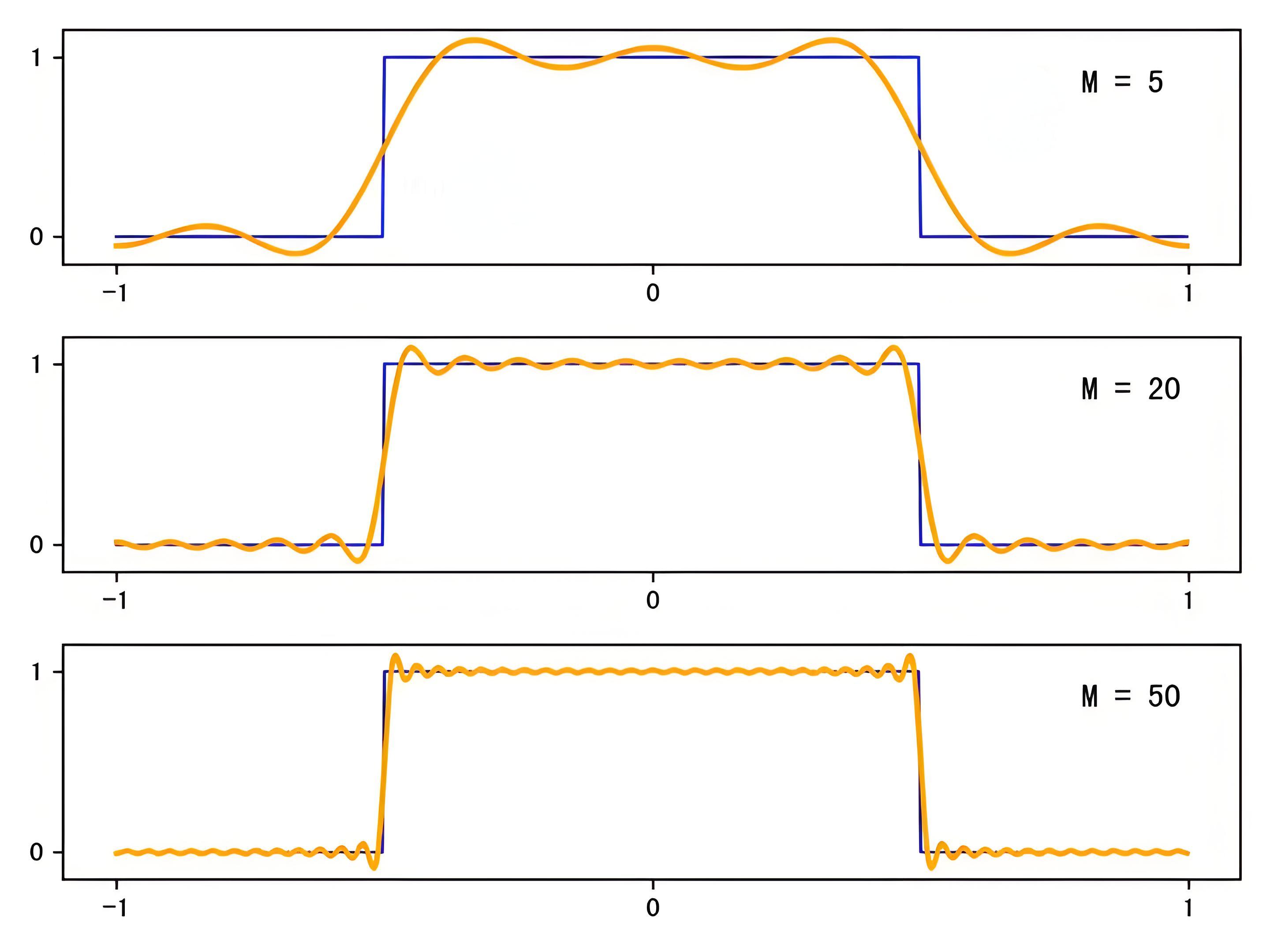}
    \caption{As $M$ increases, the peak of finite $M$-term Fourier series gradually approaches discontinuous points.}
    \label{gibbs}
\end{figure}
 
\subsubsection{Shape reconstruction without the boundary condition}

On the other hand, owing to Theorem \ref{Asymp-farfield},
we define the second indicator:
\begin{equation}\label{Tz-tl}
    \begin{aligned}
        T(z):=\frac{1}{l}\sum\limits_{-\hx\in\Theta_l}\frac{|T_{\hx}(z)|}{\sup\limits_{y\in\mathbb G}|T_{\hx}(y)|}
    \end{aligned}
\end{equation}
with
\ben
T_{\hx}(z):=\int_{k_-}^{k^+}u^{\infty}(\hx,-\hx,k)e^{-2ik\hx\cdot z}k^{(n-3)/2}dk, \quad z\in \mathbb G.
\enn
The behavior of $T_{\hx}(z)$ is illuminated in the following proposition.
\begin{proposition}\label{prop-line}
For a fixed $(\hx, -\hx)\in \mathcal{A}_2$, denote $y^+=\mathcal{G}(-\hx,\hx)$, we have
\ben
T_{\hx}(z) = 
&\kappa^{-1/2}(y^+)R^\la(\hx,-\hx)\left((k^+)^{\frac{n-1}{2}}-(k_-)^{\frac{n-1}{2}}\right) + O\left(\ln{\frac{k^+}{k_-}}\right), &\text{if }(z-y^+)\cdot\hx=0
\enn Moreover, we have
\ben
T_{\hx}(z) = O\left(\frac{k_-^{-\frac{3-n}{2}}}{|(z-y^+)\cdot\hx|}\right) + O\left(\ln{\frac{k^+}{k_-}}\right)
\enn
for any $z$ such that $(z-y^+)\cdot\hx\neq 0$.
\end{proposition}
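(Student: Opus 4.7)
The plan is to substitute Majda's asymptotic expansion from Theorem \ref{Asymp-farfield} directly into the definition of $T_{\hx}(z)$ and then analyze the two resulting one-dimensional integrals over $[k_-,k^+]$. Specifically, taking $\theta=-\hx$ in Theorem \ref{Asymp-farfield} and writing
$$u^\infty(\hx,-\hx,k) = \kappa^{-1/2}(y^+) R^\lambda(\hx,-\hx)\, e^{2iky^+\cdot \hx} + r(k), \qquad |r(k)| = O(1/k),$$
we would insert this into the defining integral and split it as
$$T_{\hx}(z) \;=\; \kappa^{-1/2}(y^+) R^\lambda(\hx,-\hx)\int_{k_-}^{k^+} e^{2ik(y^+-z)\cdot\hx}\, k^{(n-3)/2}\, dk \;+\; \int_{k_-}^{k^+} r(k)\, e^{-2ik\hx\cdot z}\, k^{(n-3)/2}\, dk.$$
The remainder integral is bounded in modulus by $C\int_{k_-}^{k^+} k^{(n-5)/2}\,dk$, which is $O(\ln(k^+/k_-))$ when $n=3$ and $O(1)$ (and hence absorbed into $O(\ln(k^+/k_-))$) when $n=2$. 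This furnishes the universal error term appearing in both claims.

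The first claim is then immediate: when $(z-y^+)\cdot\hx=0$, the oscillatory factor reduces to $1$ and the main integral collapses to $\int_{k_-}^{k^+} k^{(n-3)/2}dk = \tfrac{2}{n-1}\bigl((k^+)^{(n-1)/2}-(k_-)^{(n-1)/2}\bigr)$, yielding the leading term stated in the proposition (up to the harmless multiplicative constant $\tfrac{2}{n-1}$).

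For the second claim, with $\alpha := 2(y^+-z)\cdot\hx \neq 0$, I would treat the genuine oscillatory integral $\int_{k_-}^{k^+} e^{ik\alpha}\,k^{(n-3)/2}\,dk$ by a single integration by parts, taking $dv = e^{ik\alpha}\,dk$. This produces a boundary term of order $|\alpha|^{-1}\bigl[(k^+)^{(n-3)/2}+(k_-)^{(n-3)/2}\bigr]$, which in both dimensions $n\in\{2,3\}$ is controlled by $k_-^{(n-3)/2}/|\alpha|$ since $k^{(n-3)/2}$ is non-increasing on $[k_-,k^+]$. The residual integral $\tfrac{n-3}{2i\alpha}\int_{k_-}^{k^+} e^{ik\alpha}\,k^{(n-5)/2}\,dk$ vanishes identically for $n=3$ and is of the same order $O(k_-^{(n-3)/2}/|\alpha|)$ for $n=2$. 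Rewriting $k_-^{(n-3)/2}=k_-^{-(3-n)/2}$ and adding the Majda remainder bound gives precisely the announced estimate.

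The only delicate step is the integration-by-parts argument in the $(z-y^+)\cdot\hx\neq 0$ case, where one must track the $k$-weights separately for $n=2$ and $n=3$ to confirm that both the boundary and residual contributions assemble into the single bound $O(k_-^{-(3-n)/2}/|(z-y^+)\cdot\hx|)$; no deeper tool such as stationary phase is required here because the phase $k\alpha$ is linear in the integration variable and $y^+$ is a fixed reflection point supplied by the Gauss map $\mathcal{G}$.
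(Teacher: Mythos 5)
Your proposal is correct and follows essentially the same route as the paper: substitute Majda's expansion into the defining integral, bound the remainder by $\int_{k_-}^{k^+}k^{(n-5)/2}\,dk$, evaluate the main integral directly when $(z-y^+)\cdot\hx=0$, and integrate by parts once when $(z-y^+)\cdot\hx\neq 0$, tracking the weights $k^{(n-3)/2}$ separately for $n=2$ and $n=3$. Your explicit constant $\tfrac{2}{n-1}$ in the leading term is in fact more precise than the statement as written (which omits the factor $2$ when $n=2$), but this does not affect the conclusion.
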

\begin{proof}
    Inserting Theorem \ref{Asymp-farfield} into $T_{\hx}$ in \eqref{Tz-tl}, we derive
\ben
\Bigg|T_{\hx}(z)- \int_{k_-}^{k^+}k^{\frac{n-3}{2}} \kappa^{-1/2}(y^+)R^\la(\hx,-\hx)e^{2ik(y^+-z)\cdot\hx}dk\Bigg| =  O\left(\ln{\frac{k^+}{k_-}}\right).
\enn
The results follow by straightforward calculations if $(z-y^\pm)\cdot\hx=0$.\\
When $(z-y^+)\cdot\hx\neq 0$, note that
\ben
\begin{aligned}
\int_{k_-}^{k^+}e^{2ik(y^+-z)\cdot\hx}dk &= \frac{1}{2i(y^+-z)\cdot\hx}e^{2ik(y^+-z)\cdot\hx}\Big|^{k^+}_{k_-} \\
&= O\left(\frac{1}{|(z-y^+)\cdot\hx|}\right)
\end{aligned}
\enn
for dimension $n=3$ and that
\ben
\begin{aligned}
\int_{k_-}^{k^+}k^{-\frac{1}{2}}e^{2ik(y^+-z)\cdot\hx}dk 
&= \left.\frac{k^{-\frac{1}{2}}e^{2ik(y^+-z)\cdot\hx}}{2i(y^+-z)\cdot\hx}\right|^{k^+}_{k_-} +\frac{1}{2} 
\int_{k_-}^{k^+}\frac{e^{2ik(y^+-z)\cdot\hx}}{2i(y^+-z)\cdot\hx}k^{-\frac{3}{2}}dk\\
&= O\left(\frac{k_-^{-\frac{1}{2}}}{|(z-y^+)\cdot\hx|}\right) +O\left(\int_{k_-}^{k^+}\frac{k^{-\frac{3}{2}}}{(y^+-z)\cdot\hx}dk\right)\\
&=O\left(\frac{k_-^{-\frac{1}{2}}}{|(z-y^+)\cdot\hx|}\right)
\end{aligned}
\enn
for dimension $n=2$. The proof is complete.
\end{proof}

In conclusion, the indicator $T_{\hx}(z)$ has a large value near the line $\{z\in\mathbb R^2|(z-y^{+})\cdot\hx=0\}$, which passes through $y^+\in \pa D$, and decays when $z$ is away from this line. We divide $T_{\hx}(z)$ by $\sup\limits_{y\in\mathbb G}|T_{\hx}(y)|$ to counteract the effect of $\kappa^{-1/2}(y^+)R^\la(\hx,-\hx) $. The boundary $\pa D$ is consequently shown from the superposition of these lines, i.e., $T(z)$. 

Numerically, we plot the following indicator function:
\begin{align}\label{Tz-num-tl}
  \mathcal{T}(z)=  \frac{1}{l}\sum\limits_{-\hx\in\Theta_l}\frac{|\mathcal{T}_{\hx}(z)|}{\sup\limits_{y\in\mathbb G}|\mathcal{T}_{\hx}(y)|},
\end{align}
where
\begin{align*}
     \mathcal{T}_{\hx}(z)&:=\sum\limits_{m=0}^Mu^{\infty,\delta}(\hx,-\hx,k_m)e^{2ik_m\hx\cdot z}k_m^{-1/2}, \quad z\in \mathbb G.
\end{align*}
Obviously, $\mathcal{T}(z)\approx T(z)$. Therefore, $\mathcal{T}(z)$ is also expected to catch the boundary $\partial D$. Notably, the indicator $\mathcal{T}(z)$ requires less data than the indicator $\mathcal{I}(z)$, but the cost is that the boundary conditions cannot be determined.

\subsection{Summary of reconstruction algorithm}
Based on the methods proposed in the previous two subsections, we formulate Algorithm \ref{Alg-BandD} for identifying an obstacle using the noisy multi-frequency backscattering far field patterns.

\begin{algorithm}[htpb]
\label{Alg-BandD}
\caption{Reconstruct the boundary $\partial D$ and identify boundary condition}
\LinesNumbered 
\KwIn{Far field data set$\left\{u^{\infty,\delta}(\hx,\theta,k)\Big|(\hx,\theta)\in\mathcal{A}_2,\ k\in(k_-,k^+)\right\}$ with $Q_1=-I,\ Q_2=Q_{\alpha_1}$ and $Q_3=Q_{\alpha_2}$ such that $\Theta_l=R_{\alpha_1}\Theta_l=R_{\alpha_2}\Theta_l$. Here, $R_{\alpha_1}$, $R_{\alpha_2}$, $Q_{\alpha_1}$ and $Q_{\alpha_2}$ are defined in \eqref{QandR} with $\alpha_1=8$ and $\alpha_2=10$.} 
\KwOut{The boundary $\partial D$. The type of boundary condition. Furthermore,  for impedance type scatterer $D$, output $\lambda(\mathcal{G}(-\hx,\hx)),\ \hx\in\Theta_L$.}
Compute $\mathcal{L}(\hx,\alpha_j)$ in \eqref{sovle-lambda-num-noD} for $-\hx\in\Theta_l,j=1,2.$\\
Follow \eqref{judgement-num-DorN}, \eqref{sovlelambda-num-equation} and \eqref{reconstruct-lambda} to obtain $\Tilde{\gamma}(\hx)$ in \eqref{num-set-gamma} for $-\hx\in\Theta_l.$ For impedance type scatterer, calculate $\lambda(\mathcal{G}(-\hx,\hx))$.\\
Plot the indicator function $\mathcal{I}(z)$ to reconstruct $\partial D$.\\
Use \eqref{I+I-DN} to distinguish whether the boundary condition is the Dirichlet condition
or the Neumann condition.
\end{algorithm}

{\bf Remarks:}
\begin{itemize}
    \item One may use the indicator $\mathcal{T}$ given by \eqref{Tz-num-tl} directly to reconstruct the boundary $\pa D$.  Different to $\mathcal{I}$ given by \eqref{Iz-num-bj}, the behavior of the indicator $\mathcal{T}$ is independent of the physical property of the obstacles.    
    \item Given that $\pa D$ is known or reconstructed in a high resolution (e.g., by the direct sampling method with the indicator $\mathcal{T}$), we can now compute the impedance coefficient $\lambda$ by the formula \eqref{sovlelambda-num-NeD}. Note that the formula \eqref{sovlelambda-num-NeD} is much more robust to measurement noises but highly rely on the resolution of $\pa D$ since we have used the Gauss curvature.
    \item In step 2, we encounter an inherent ambiguity in distinguishing between Dirichlet and Neumann scatterers, as both boundary conditions satisfy the threshold criterion specified in \eqref{judgement-num-DorN}. This limitation is subsequently resolved in step 4 through the diagnostic behavior of our indicator function $\mathcal{I}(z)$, as mathematically characterized in \eqref{I+I-DN}.
\end{itemize}

\section{Numerical examples and discussions}
\label{NumExamples}
\setcounter{equation}{0}
In this section, a variety of numerical examples are presented in two dimensions to illustrate the applicability and the effectiveness of our sampling methods.
We consider the following two benchmark examples:
\be
\label{Egg}&\mbox{\rm Egg:}&\quad x(t)\ =\left(1.5\cos t, \frac{\sin t}{1+0.2\cos{t}} \right)^T,\quad -\pi\leq t\leq\pi,\\
\label{kite}&\mbox{\rm Kite:}&\quad x(t)\ =\ (\cos t+0.65\cos 2t-0.65, 1.5\sin t)^T,\quad -\pi\leq t\leq\pi.
\en
Note that the egg shaped domain is convex, while the kite shaped domain is concave.
In our simulations, if not stated otherwise, we will always consider equally distributed wave numbers with $\delta k=0.1$ and $64$ equally distributed directions of the incident wave in $\mathbb S^1$. The grids are equally distributed on the rectangle $[-3,3]^2$ with a sampling distance of $0.01$. 

For each example, we consider the following different boundary conditions:
\begin{itemize}
    \item Dirichlet condition,
    \item Neumann condition,
    \item $\lambda_1(t)=1+0.1\sin(t),\ -\pi<t<\pi$,
     \item $\lambda_2(t)=2+0.5\sin(t)+0.2\sin(5t),\ -\pi<t<\pi$.
\end{itemize}
\subsection{Reconstructions for the Egg Example}
Following Algorithm \ref{Alg-BandD}, we begin by distinguishing between the following two scenarios:
\begin{itemize}
    \item The scatterer $D$ is either sound-soft or sound-hard.
    \item The scatterer $D$ is of impedance type.
\end{itemize}

\begin{figure}[htbp]
   \centering
    \begin{tabular}{cccc}
        \subfigure[Dirichlet case.]{
            \label{judgement-D-Egg-20-50}
        \includegraphics[width=0.2\textwidth]{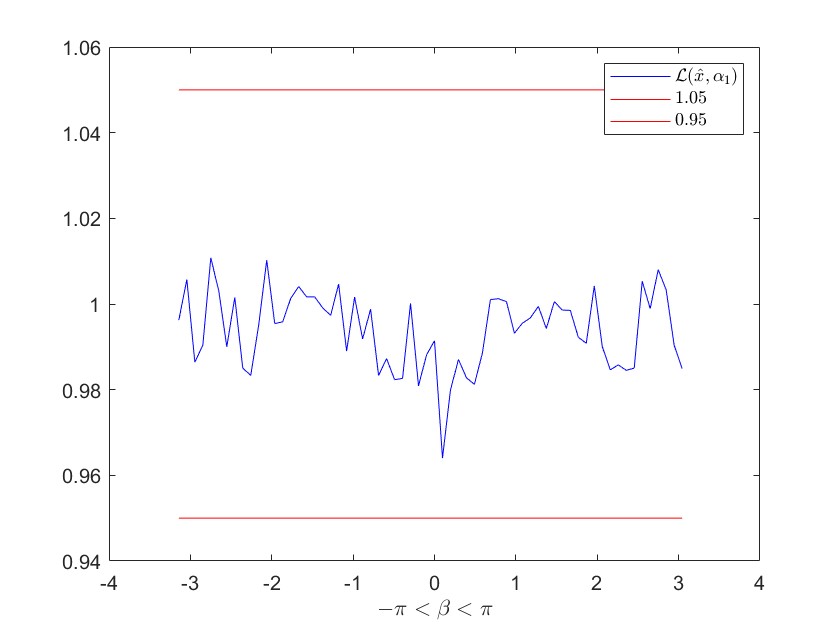}
        }&
        \subfigure[Neumann case.]{
            \label{judgement-N-Egg-20-50}
            \includegraphics[width=0.2\textwidth]{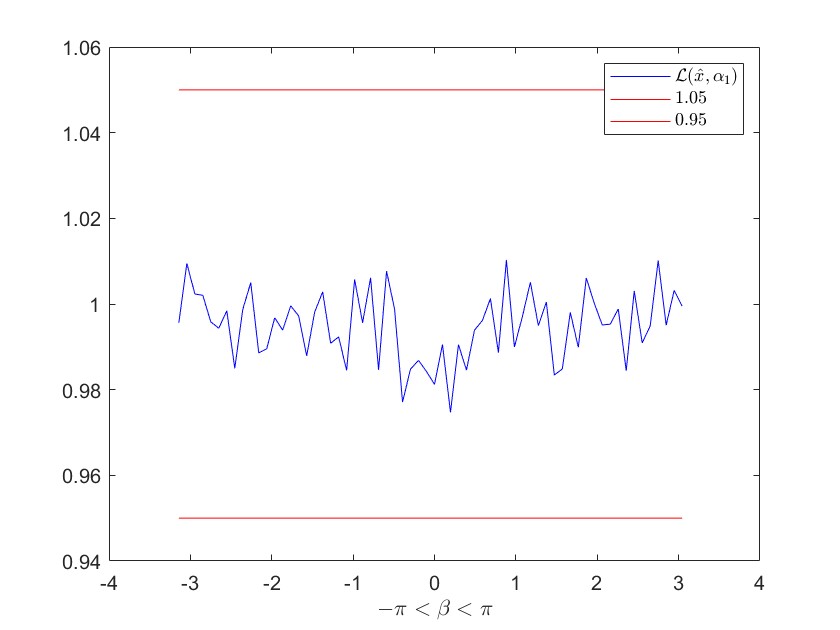}
        }&
        \subfigure[$\lambda_1$ case.]{
            \label{judgement-1S-Egg-20-50}
            \includegraphics[width=0.2\textwidth]{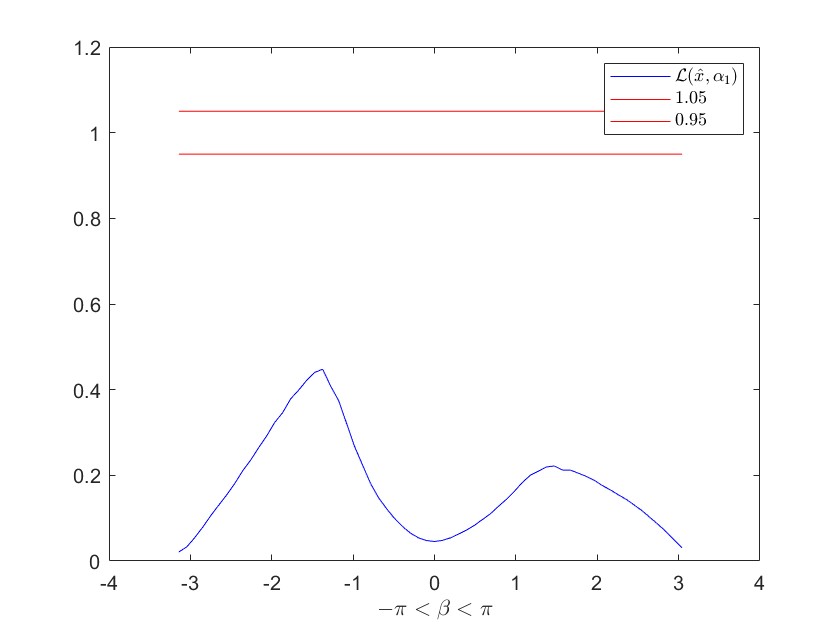}
        }&
        \subfigure[$\lambda_2$ case.]{
            \label{judgement-2SS-Egg-20-50}
            \includegraphics[width=0.2\textwidth]{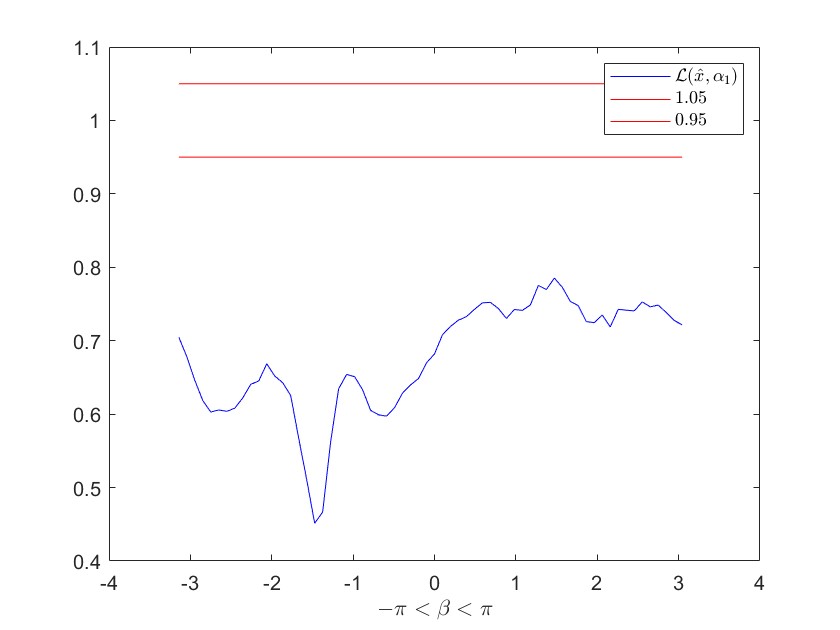}
        }
    \end{tabular}
    \caption{The values of $\mathcal{L}(\hx, \alpha_1)$ for four different boundary conditions with frequency band $[20, 50]$.}
    \label{judgement-Egg-20-50}
\end{figure} 
Taking $\hx=(\cos \beta,\sin \beta)^T,\ -\pi<\beta<\pi$. Based on the criteria \eqref{judgement-num-DorN}, Figure \ref{judgement-Egg-20-50} obviously show that the values of $\mathcal{L}(\hx, \alpha_1)$ indeed lie in the interval $(0.95, 1.05)$ for the Dirichlet and the Neumann cases. Furthermore,  Figure \ref{lambda-egg} shows the reconstruction of $\lambda(\mathcal{G}(-\hx,\hx))=\lambda(\beta)$ using \eqref{reconstruct-lambda}.
\begin{figure}[htbp]
   \centering
    \begin{tabular}{cc}
        \subfigure[Comparison for $\lambda_1$.]{
            \label{lambda1-egg}
        \includegraphics[width=0.3\textwidth]{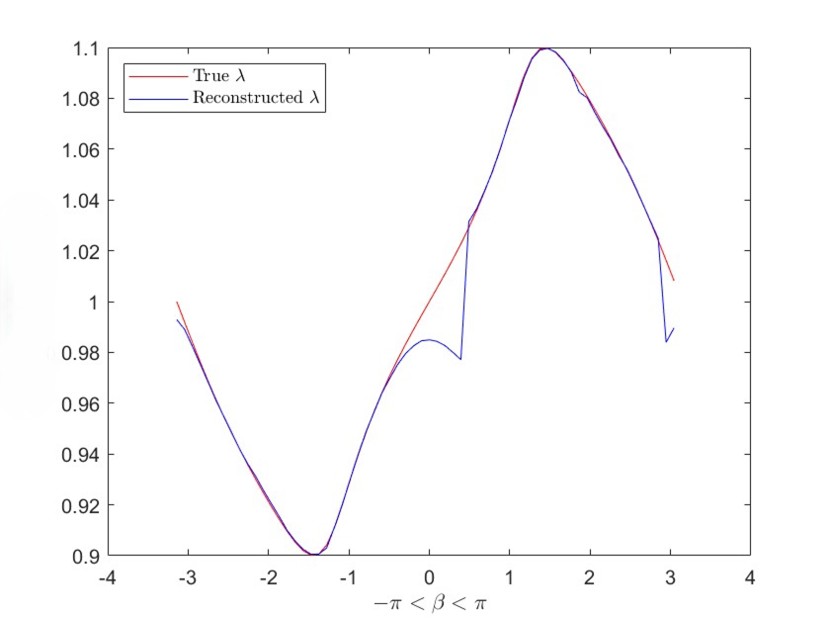}
        }&
        \subfigure[Comparison for  $\lambda_2$.]{
            \label{lambda2-egg}
            \includegraphics[width=0.3\textwidth]{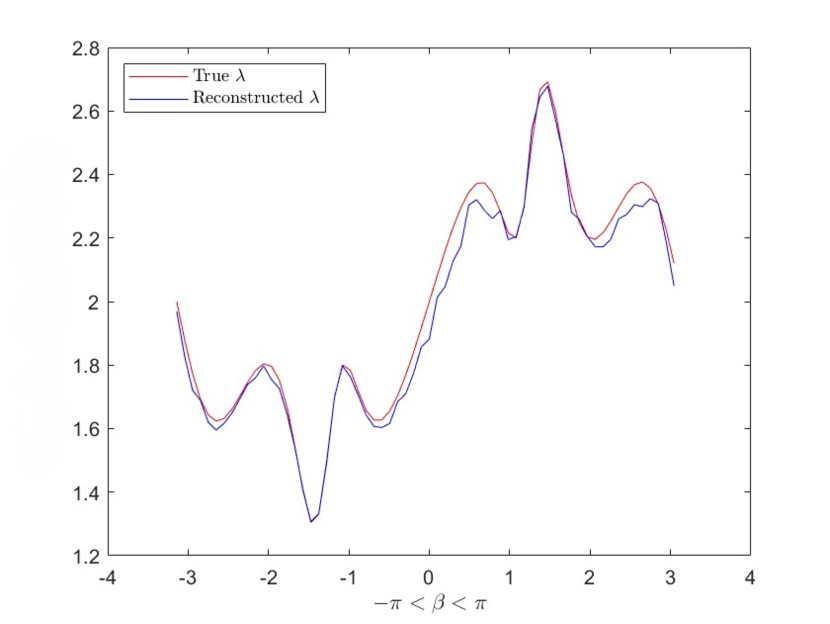}
        }
    \end{tabular}
    \caption{True and reconstructed $\lambda_1$ and $\lambda_2$ using \eqref{reconstruct-lambda} with frequency band $[20, 50]$.}
    \label{lambda-egg}
\end{figure}
The reconstruction of $\lambda$ is well-defined for values $\lambda\approx1$, with the exception of the specific case when $\lambda = 1$.
This is reasonable because $\mathcal{L}\to\infty$  as $\lambda\to1$ in \eqref{sovlelambda-num-equation}, and according to \eqref{stab-lambda-L}, it further makes the equation \eqref{sovlelambda-num-equation} so stable. However, $\mathcal{L}$ is always calculated as a finite value, which is naturally not good for reconstruction in the case of $\lambda=1$.
\begin{figure}[htbp]
   \centering
    \begin{tabular}{cccc}
        \subfigure[Dirichlet case.]{
            \label{RD-D-Egg-20-50}
        \includegraphics[width=0.2\textwidth]{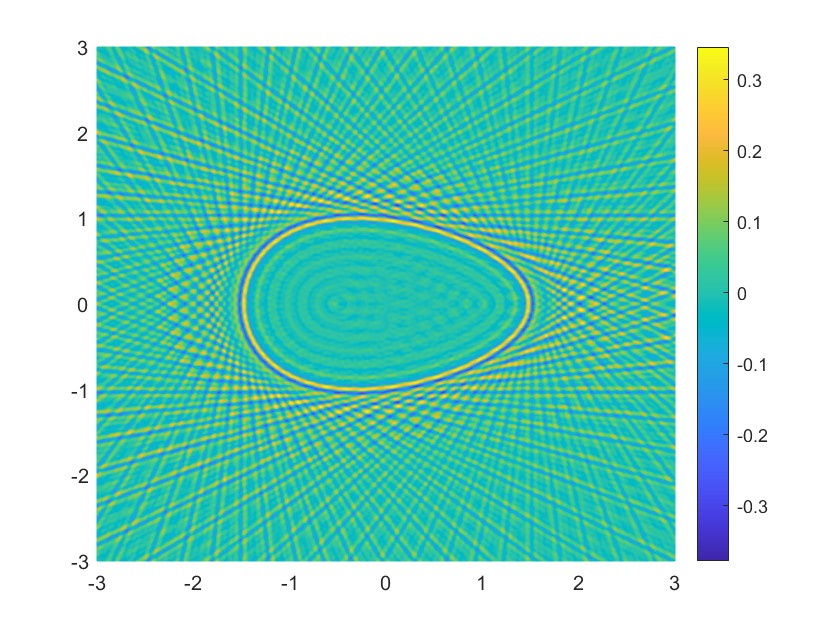}
        }&
                \subfigure[Neumann case.]{
            \label{RD-N-Egg-20-50}
        \includegraphics[width=0.2\textwidth]{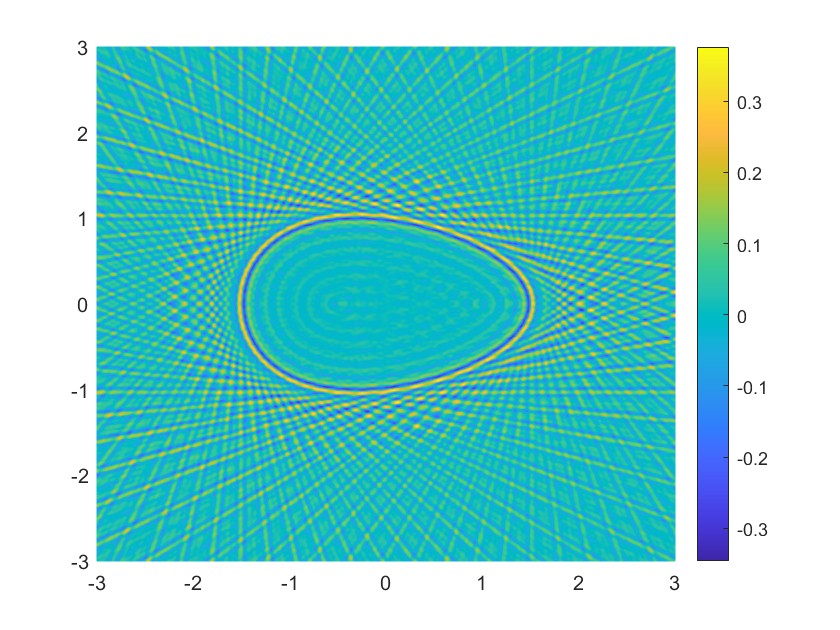}
        }&
                \subfigure[$\lambda_1$ case.]{
            \label{RD-1S-Egg-20-50}
        \includegraphics[width=0.2\textwidth]{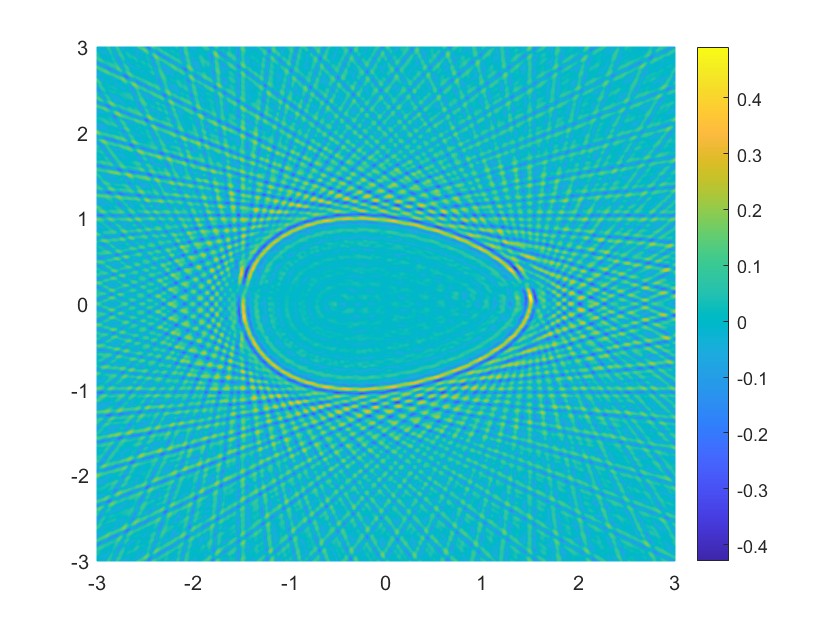}
        }&
        \subfigure[$\lambda_2$ case.]{
           \label{RD-2SS-Egg-20-50}
            \includegraphics[width=0.2\textwidth]{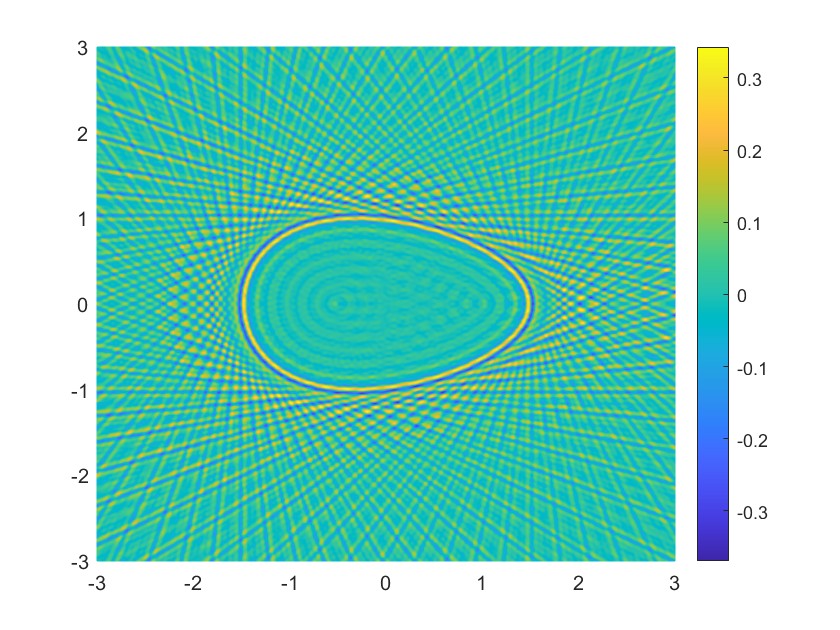}
        }\\
        \subfigure[Dirichlet case.]{
            \label{TRD-D-Egg-20-50}
            \includegraphics[width=0.2\textwidth]{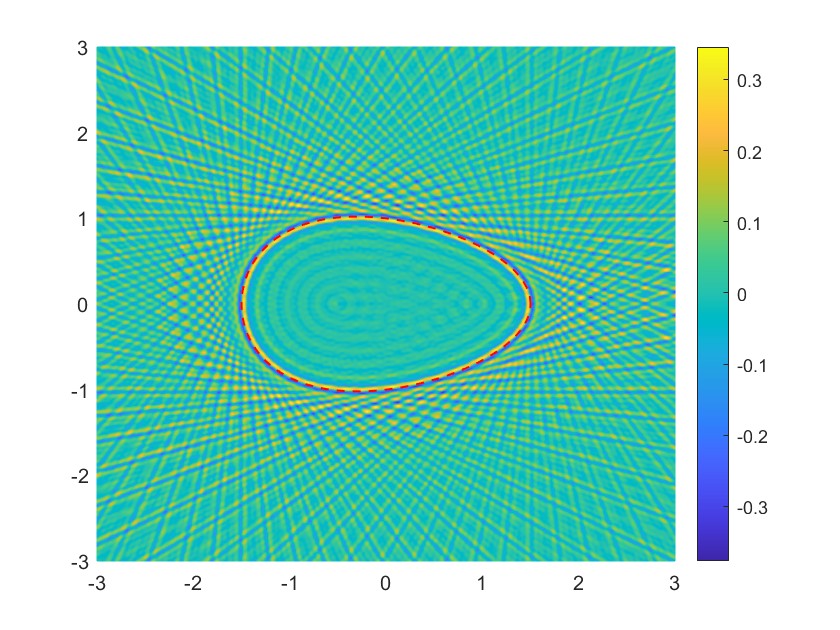}
        }&
                \subfigure[Neumann case.]{
            \label{TRD-N-Egg-20-50}
        \includegraphics[width=0.2\textwidth]{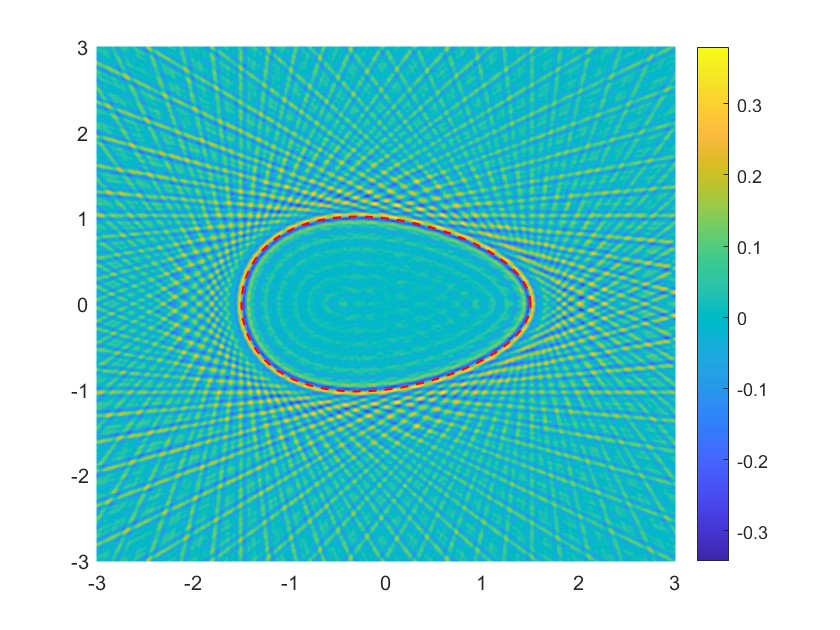}
        }&
                \subfigure[$\lambda_1$ case.]{
            \label{TRD-1S-Egg-20-50}
        \includegraphics[width=0.2\textwidth]{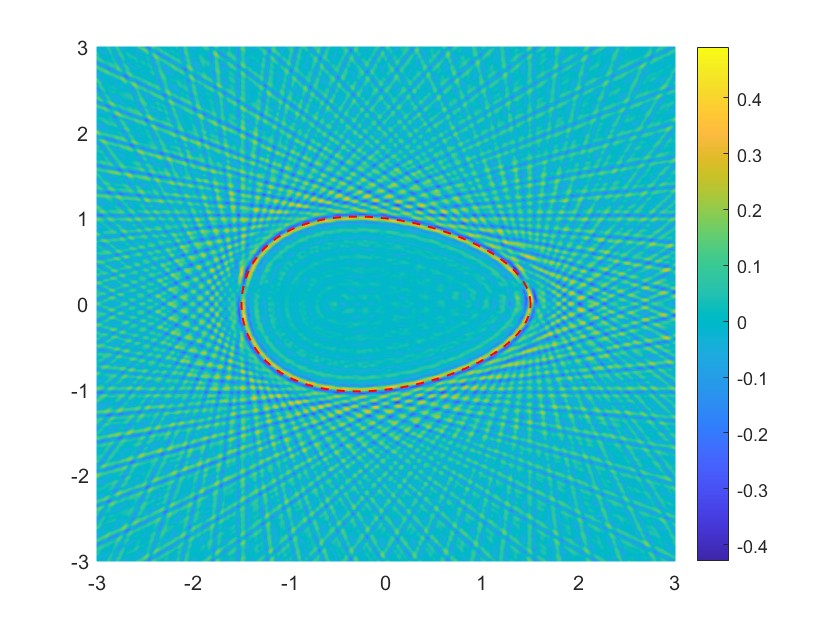}
        }&
        \subfigure[$\lambda_2$ case.]{
            \label{TRD-2SS-Egg-20-50}
            \includegraphics[width=0.2\textwidth]{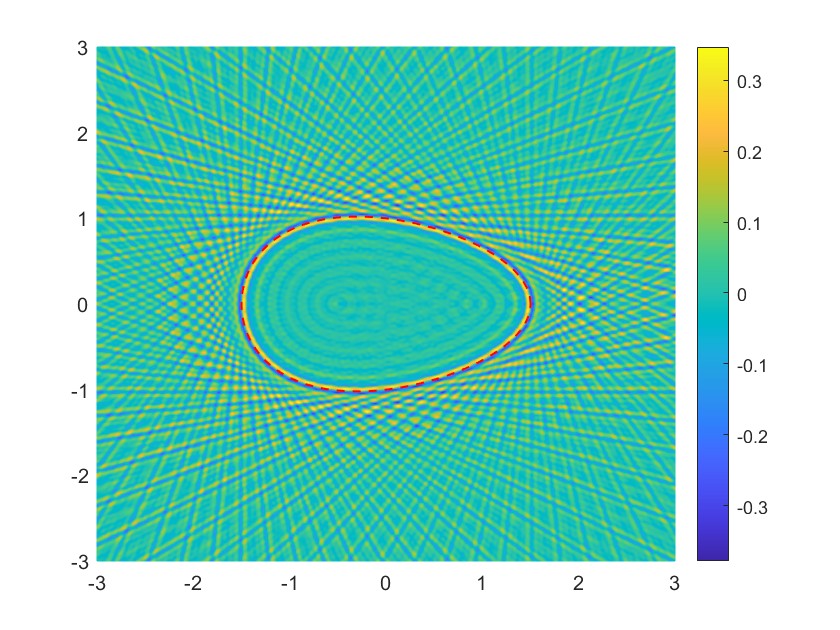}
        }
    \end{tabular}
    \caption{Reconstructions of $\partial D$ with $\mathcal{I}(z)$ in different case using frequency band $[20, 50]$. Top: Plot of $\mathcal{I}(z)$. Bottom: Comparison with true $\partial D$ (red dotted curve).}
    \label{RD-Egg-20-50}
\end{figure}
Figure \ref{RD-Egg-20-50} shows the the effectiveness of the indicator function $\mathcal{I}(z)$. Owing to the high-frequency data, these images seem somewhat not obvious. Using more low frequency data can improve this drawback for some examples. Although it is not compatible with the previous approximate theory, it is useful in practice. The details are shown in Figure \ref{RD-Egg-5-30}. Note that in the $\lambda_1$ case, the indicator function $\mathcal{I}(z)$ obtained significantly larger values at two points than elsewhere, because $\lambda_1\approx1$ near these points and $\lambda=1$ have indeed caused us trouble.
\begin{figure}[htbp]
   \centering
    \begin{tabular}{cccc}
       \subfigure[Dirichlet case.]{
            \label{RD-D-Egg-5-30}
      \includegraphics[width=0.2\textwidth]{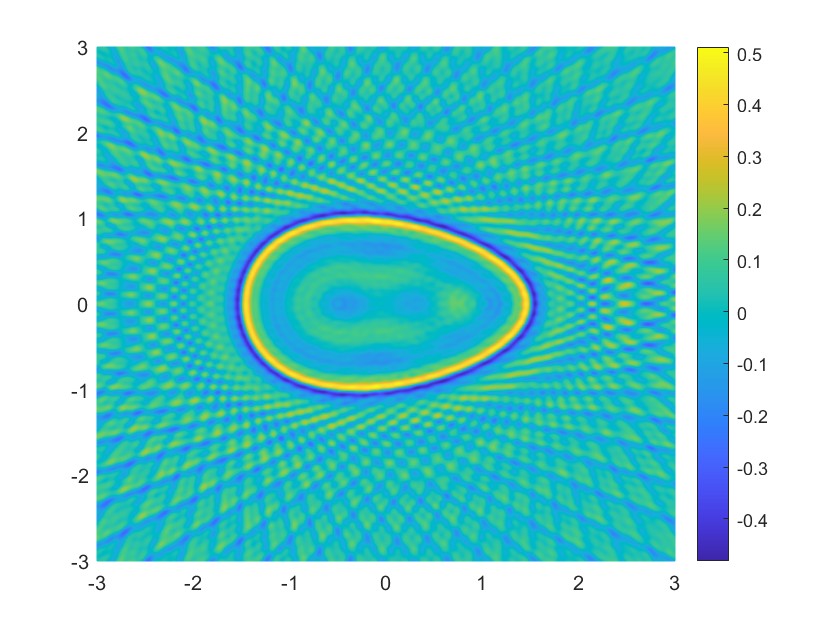}
        }&
                \subfigure[Neumann case.]{
           \label{RD-N-Egg-5-30}
        \includegraphics[width=0.2\textwidth]{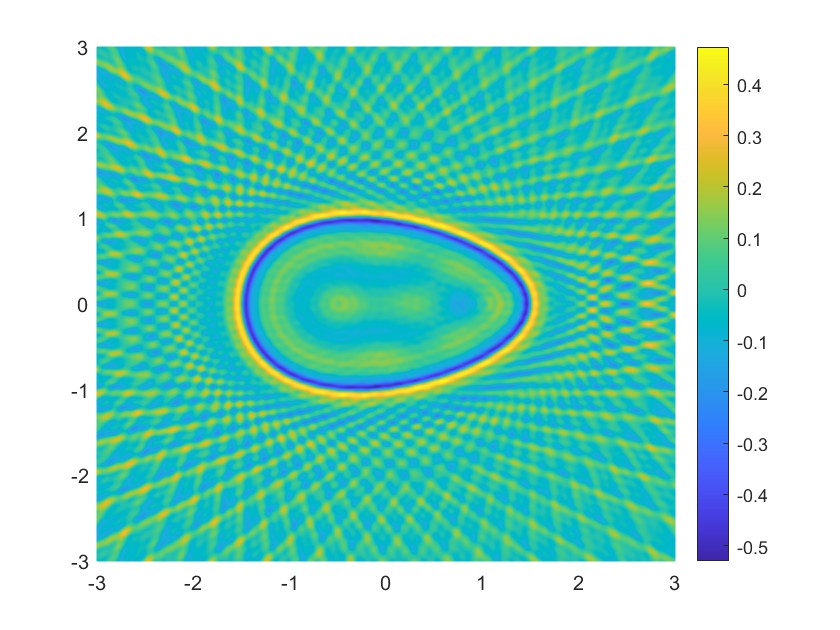}
        }&
               \subfigure[$\lambda_1$ case.]{
            \label{RD-1S-Egg-5-30}
       \includegraphics[width=0.2\textwidth]{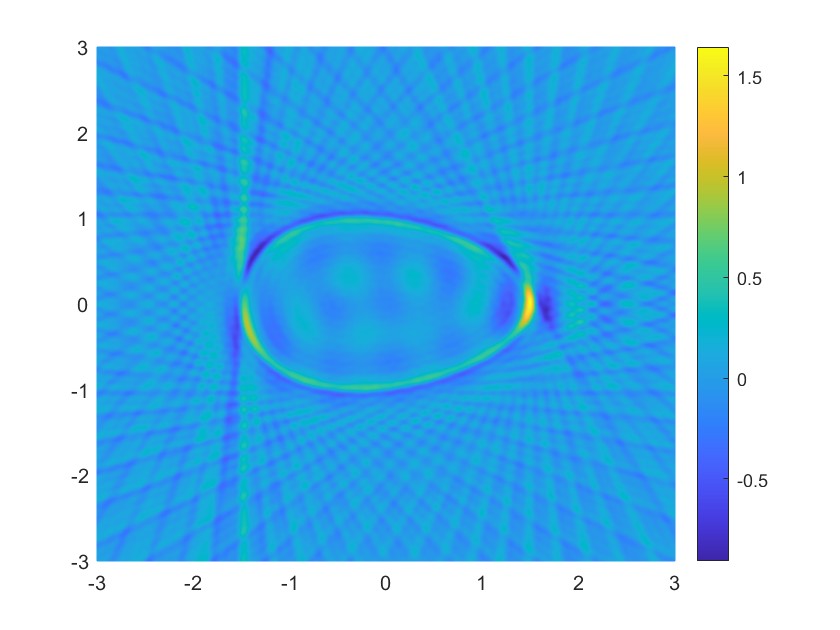}
       }&
       \subfigure[$\lambda_2$ case.]{
          \label{RD-2SS-Egg-5-30}
            \includegraphics[width=0.2\textwidth]{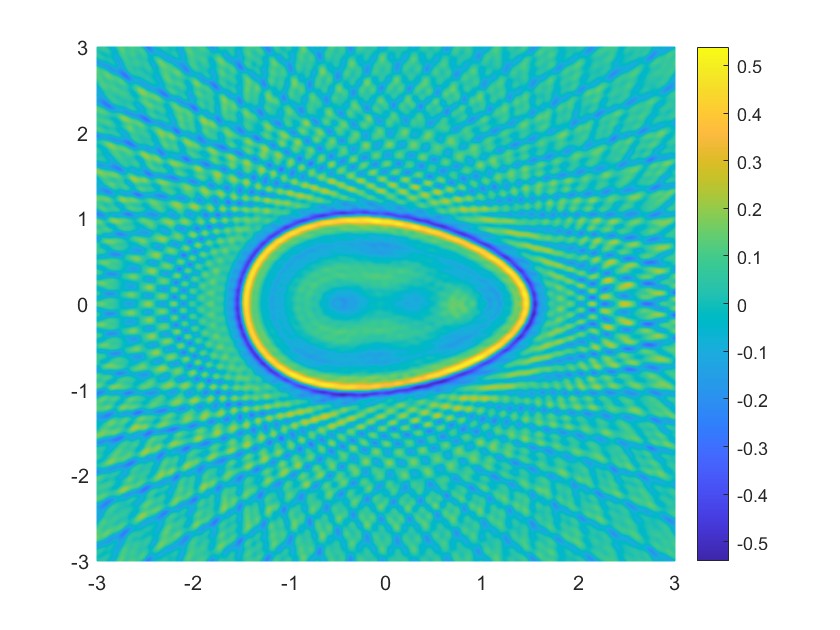}
        }\\
        \subfigure[Dirichlet case.]{
            \label{TRD-D-Egg-5-30}
            \includegraphics[width=0.2\textwidth]{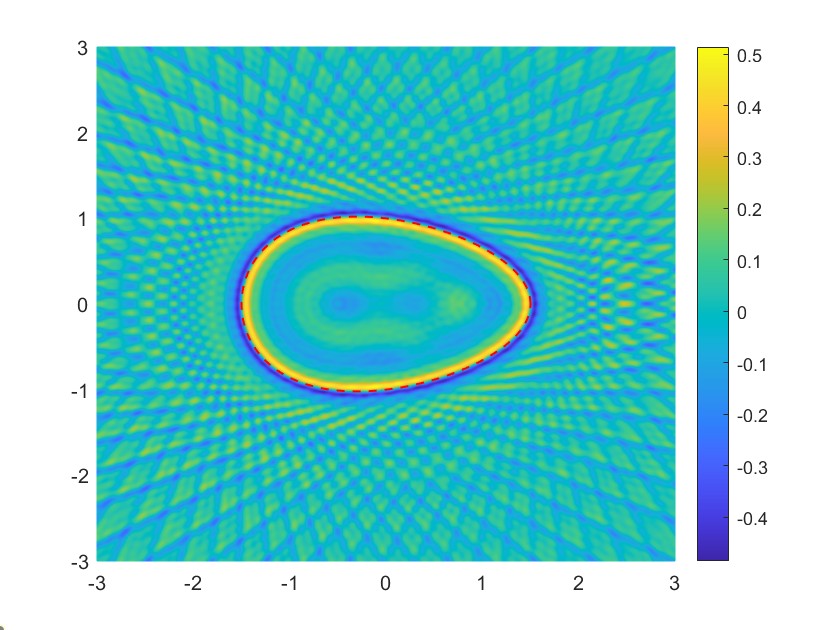}
        }&
                \subfigure[Neumann case.]{
            \label{TRD-N-Egg-5-30}
        \includegraphics[width=0.2\textwidth]{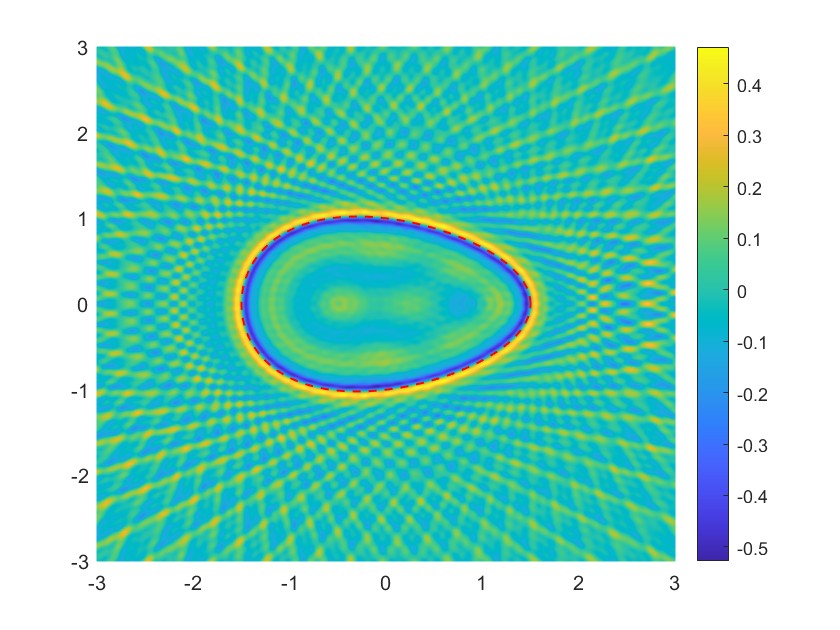}
        }&
                \subfigure[$\lambda_1$ case.]{
            \label{TRD-1S-Egg-5-30}
        \includegraphics[width=0.2\textwidth]{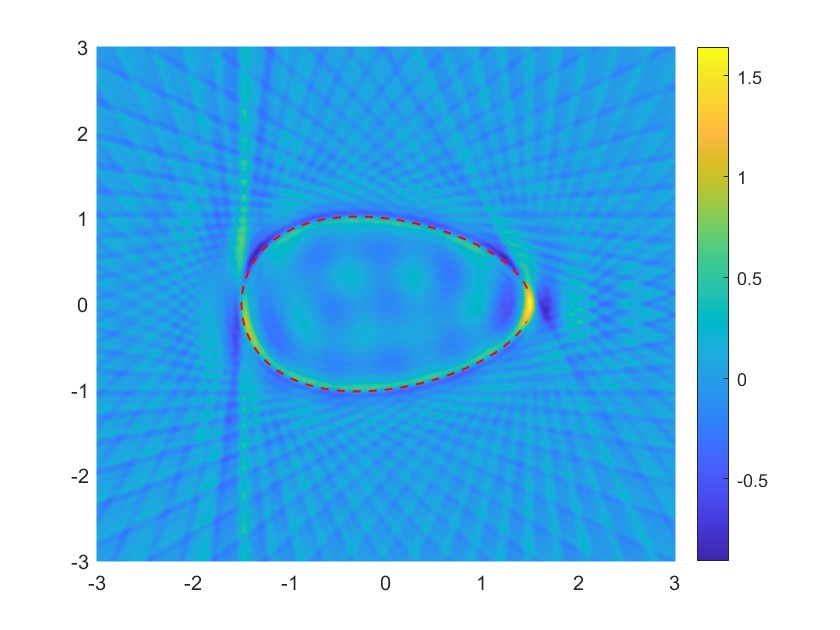}
        }&
        \subfigure[$\lambda_2$ case.]{
            \label{TRD-2SS-Egg-5-30}
            \includegraphics[width=0.2\textwidth]{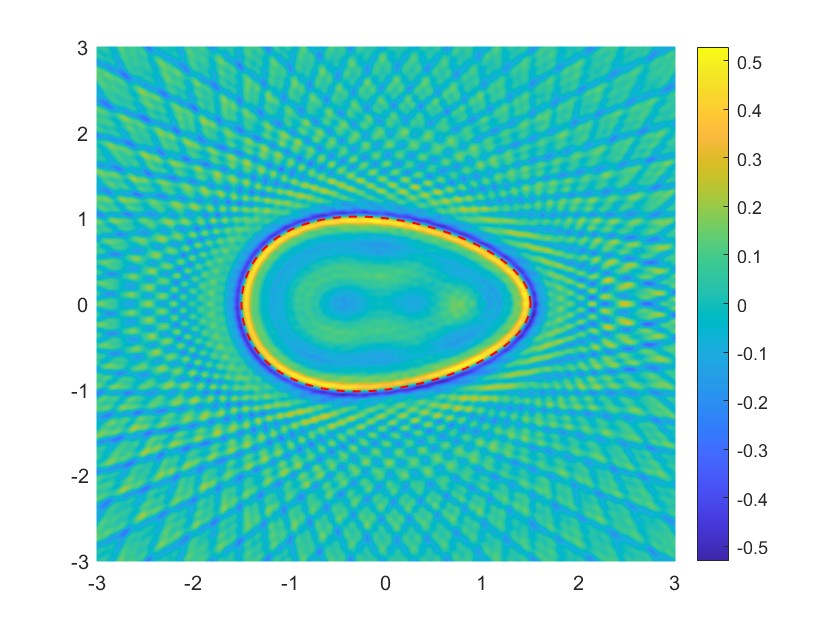}
        }
    \end{tabular}
    \caption{Reconstructions of $\partial D$ with $\mathcal{I}(z)$ in different case using frequency band $[5, 30]$. Top: Plot of $\mathcal{I}(z)$. Bottom: Comparison with true $\partial D$(red dotted curve).}
    \label{RD-Egg-5-30}
\end{figure}

Meanwhile, Figure \ref{RD-Egg-20-50} and Figure \ref{RD-Egg-5-30} demonstrate the effectiveness of the judgment criteria \eqref{I+I-DN}. Furthermore, from the Gibbs phenomenon, ignoring the approximation error of the scattering field and choosing a smaller $k_-$ and a larger $k^+$ can lead to more accurate judgment. Figure \ref{RD-D-Egg-1-30} and \ref{RD-N-Egg-1-30} confirm the validity of our inference. Figure \ref{RD-1S-Egg-1-30} and \ref{RD-2SS-Egg-1-30} use the reconstructed $\lambda$ from Figure \ref{lambda-egg}, which implies that if the reconstructed impedance coefficient $\lambda$ is not approximately $1$, we can also use low-frequency data to obtain a good reconstruction of $D$ or $\partial D$.
\begin{figure}[htbp]
   \centering
    \begin{tabular}{cccc}
        \subfigure[Dirichlet case.]{
            \label{RD-D-Egg-1-30}
        \includegraphics[width=0.2\textwidth]{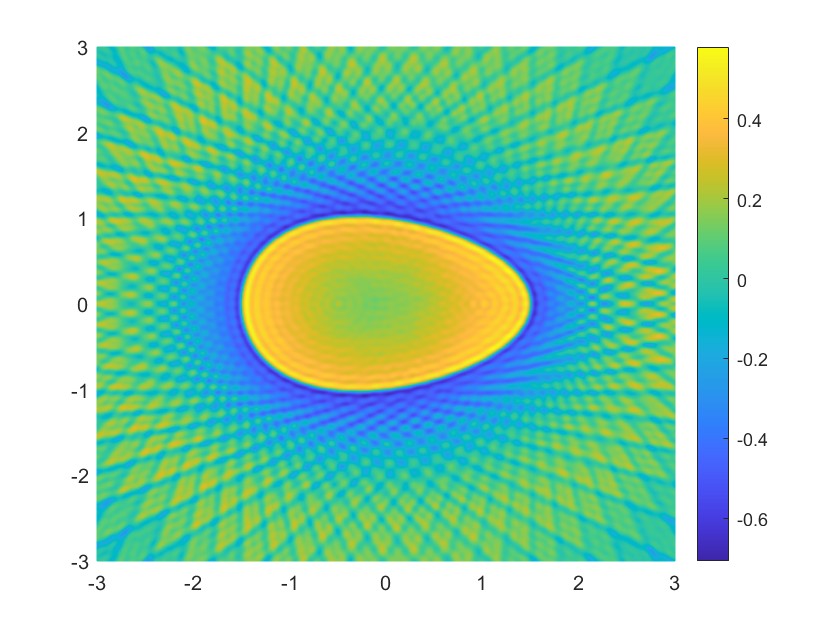}
        }&
        \subfigure[Neumann case.]{
            \label{RD-N-Egg-1-30}
            \includegraphics[width=0.2\textwidth]{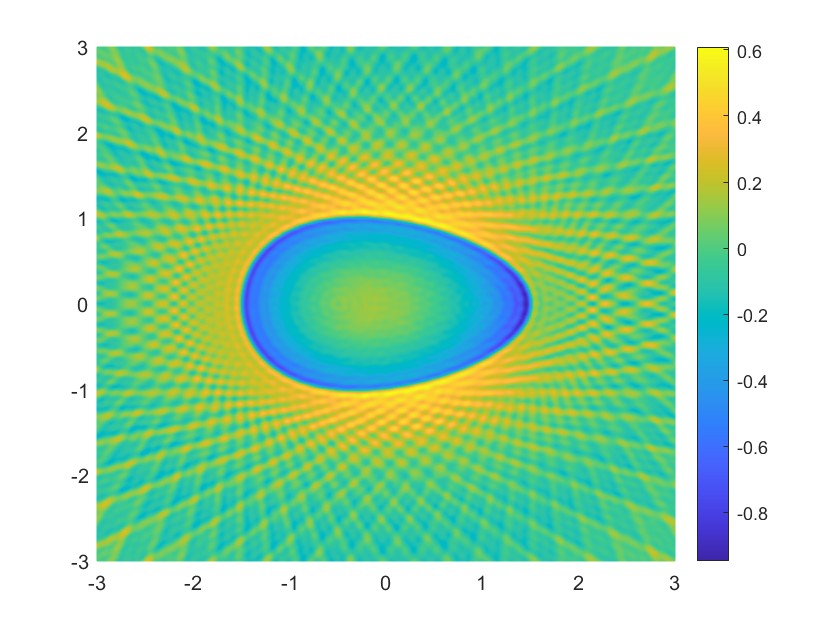}
        }&
        \subfigure[$\lambda_1$ case.]{
            \label{RD-1S-Egg-1-30}
            \includegraphics[width=0.2\textwidth]{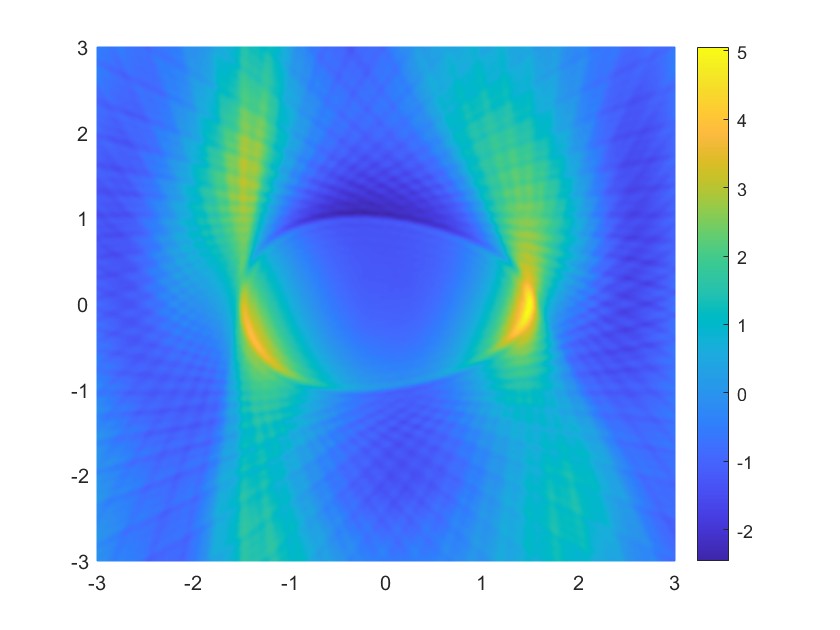}
        }&
        \subfigure[$\lambda_2$ case.]{
            \label{RD-2SS-Egg-1-30}
            \includegraphics[width=0.2\textwidth]{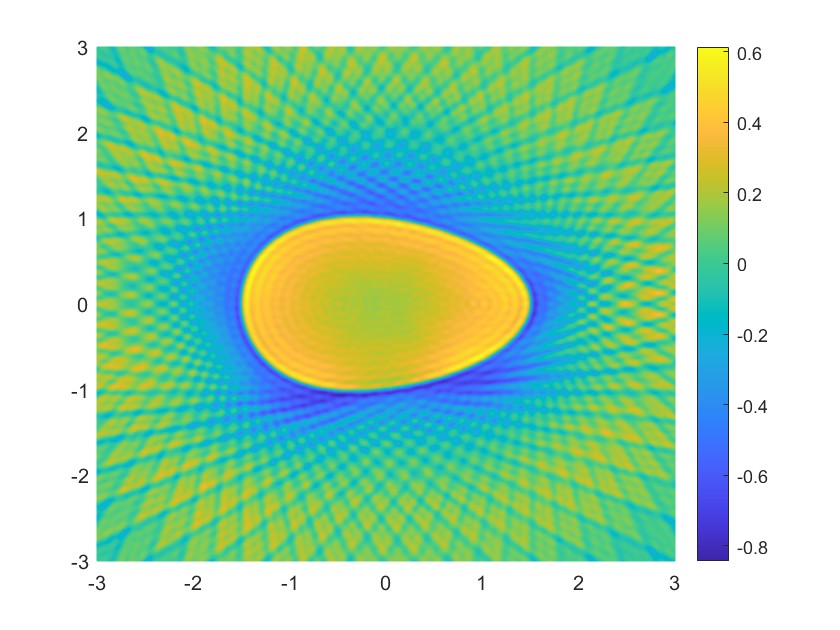}
        }
    \end{tabular}
    \caption{Reconstructions of $\partial D$ with $\mathcal{I}(z)$ in different case using frequency band $[1, 30]$.}
    \label{RD-Egg-1-30}
\end{figure} 

So far, we have completed the reconstruction for boundary conditions and boundary $\partial D$. Assuming that the boundary is exactly reconstructed,
Figure \ref{lambda-egg-30del} shows that with greater noise, the method \eqref{sovlelambda-num-NeD} can effectively reconstruct the impedance coefficient $\lambda$. 
The superior effectiveness of method \eqref{sovlelambda-num-NeD} stems from two key advantages over \eqref{reconstruct-lambda}. First, unlike \eqref{reconstruct-lambda} which suffers from pseudo-solution interference, \eqref{sovlelambda-num-NeD} completely eliminates this computational artifact. Second, \eqref{sovlelambda-num-NeD} incorporates significantly more prior information - specifically the given boundary conditions - than \eqref{reconstruct-lambda}, leading to more robust and accurate results.
\begin{figure}[htbp]
   \centering
    \begin{tabular}{cc}
        \subfigure[Comparison for $\lambda_1$.]{
            \label{lambda1-egg-30del}
        \includegraphics[width=0.3\textwidth]{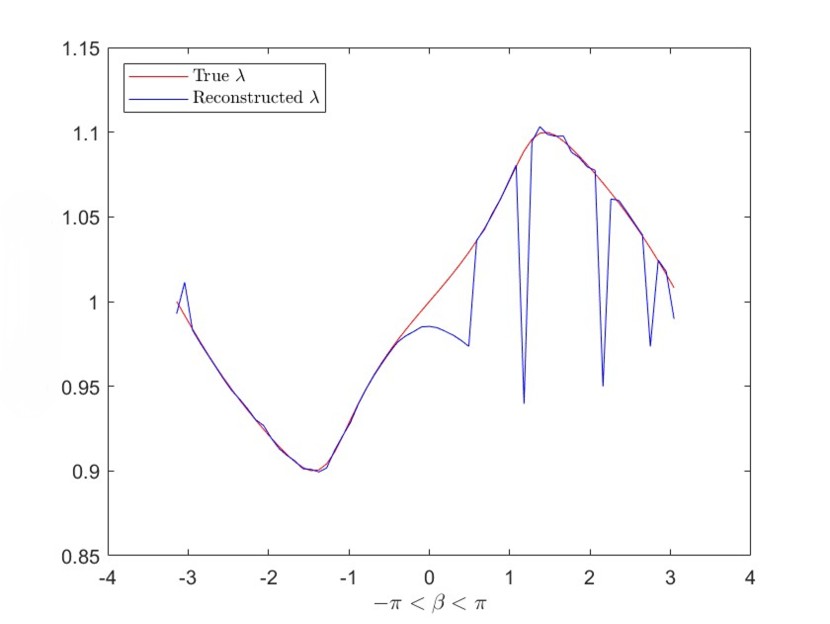}
        }&
        \subfigure[Comparison for  $\lambda_2$.]{
            \label{lambda2-egg-30del}
            \includegraphics[width=0.3\textwidth]{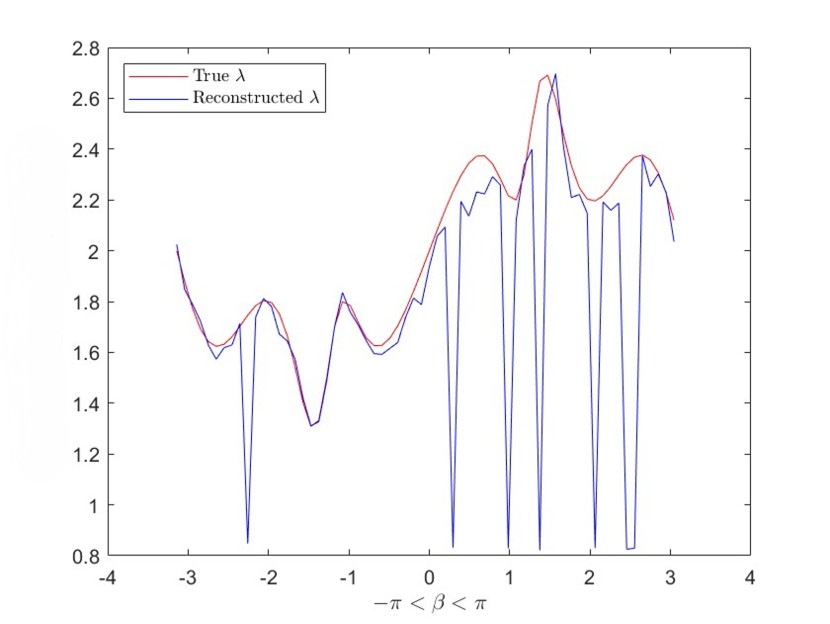}
        }\\
                \subfigure[Comparison for $\lambda_1$.]{
            \label{lambda1-egg-30del-NeD}
        \includegraphics[width=0.3\textwidth]{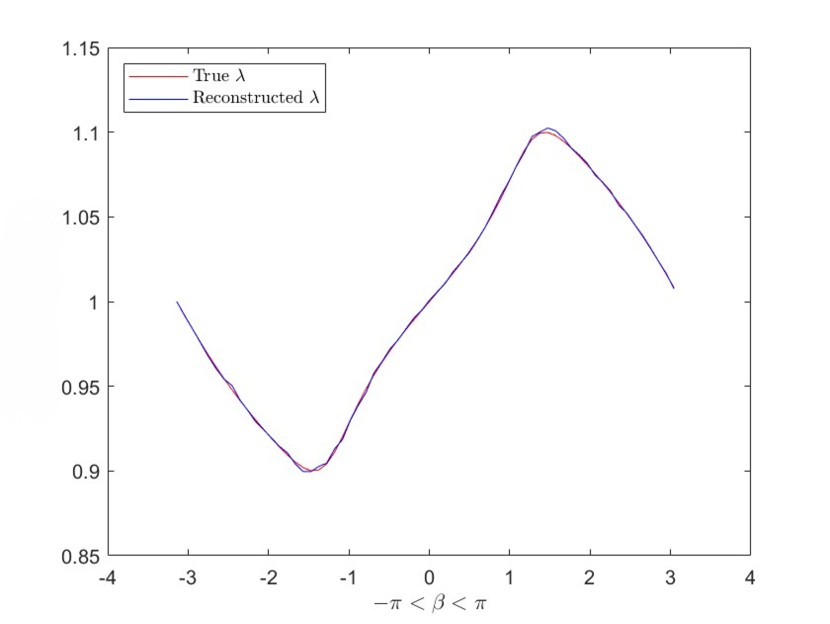}
        }&
        \subfigure[Comparison for  $\lambda_2$.]{
            \label{lambda2-egg-30del-NeD}
            \includegraphics[width=0.3\textwidth]{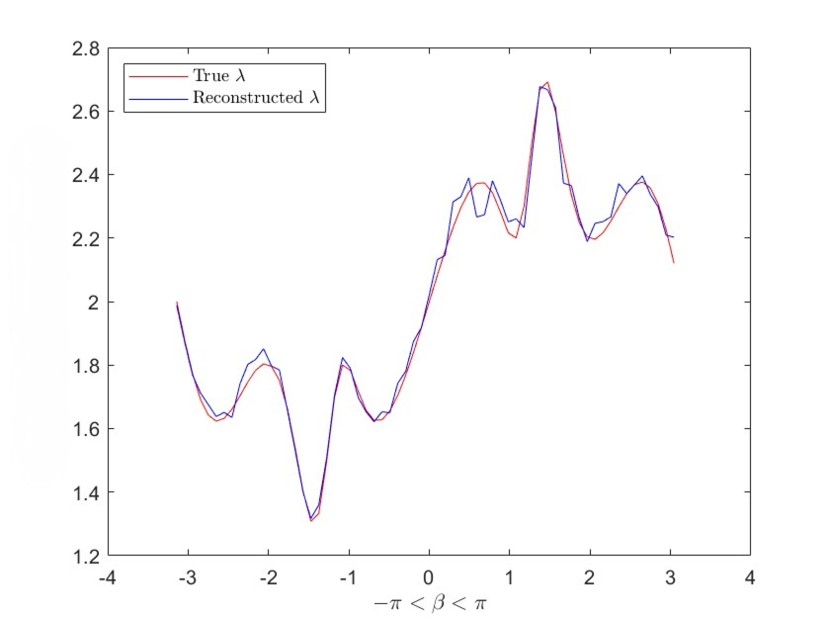}
        }
    \end{tabular}
    \caption{True and reconstructed $\lambda_1$ and $\lambda_2$ with $\delta=0.3$ using frequency band $[20, 50]$. Top: Reconstructed $\lambda$ by using \eqref{reconstruct-lambda}. Bottom: Reconstructed $\lambda$ by using \eqref{sovlelambda-num-NeD}.}
    \label{lambda-egg-30del}
\end{figure}

We end this example with Figure \ref{spa-Egg-20-50}. Figure \ref{spa-Egg-20-50} shows the reconstruction using the indicator function $\mathcal{T}$ with sparse directional data. Similar to the analysis of Proposition \ref{prop-line}, we capture these lines mentioned above. 
The reconstruction is improved with the increase of the observation direction number $SD$.
\begin{figure}[htbp]
   \centering
    \begin{tabular}{cccc}
        \subfigure[Dirichlet case.]{
            \label{spa1-D-Egg-20-50}
        \includegraphics[width=0.2\textwidth]{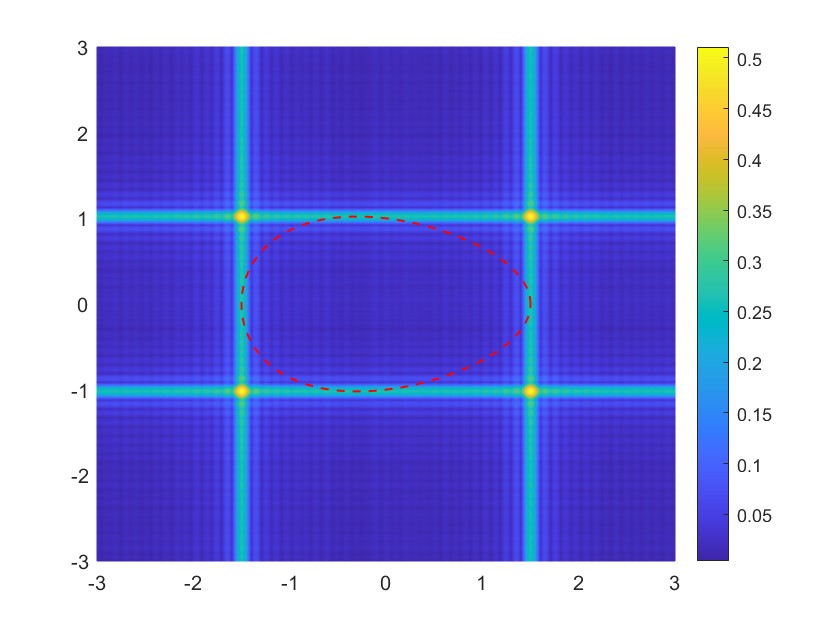}
        }&
                \subfigure[Neumann case.]{
            \label{spa1-N-Egg-20-50}
        \includegraphics[width=0.2\textwidth]{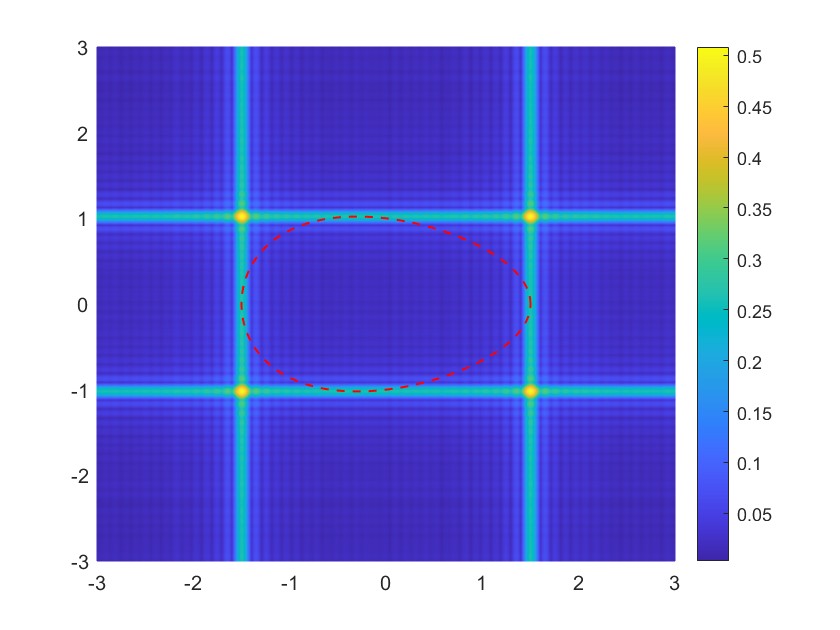}
        }&
                \subfigure[$\lambda_1$ case.]{
            \label{spa1-1S-Egg-20-50}
        \includegraphics[width=0.2\textwidth]{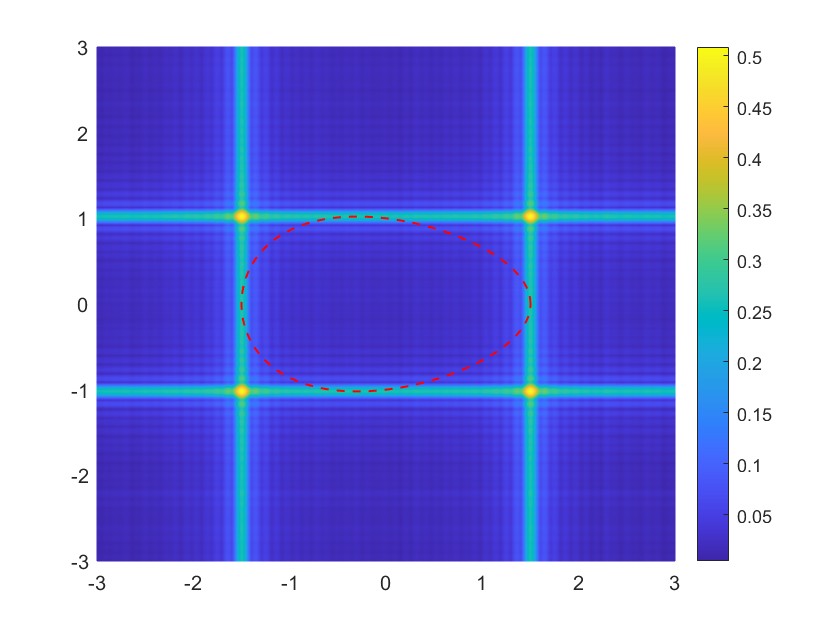}
        }&
        \subfigure[$\lambda_2$ case.]{
           \label{spa1-2SS-Egg-20-50}
            \includegraphics[width=0.2\textwidth]{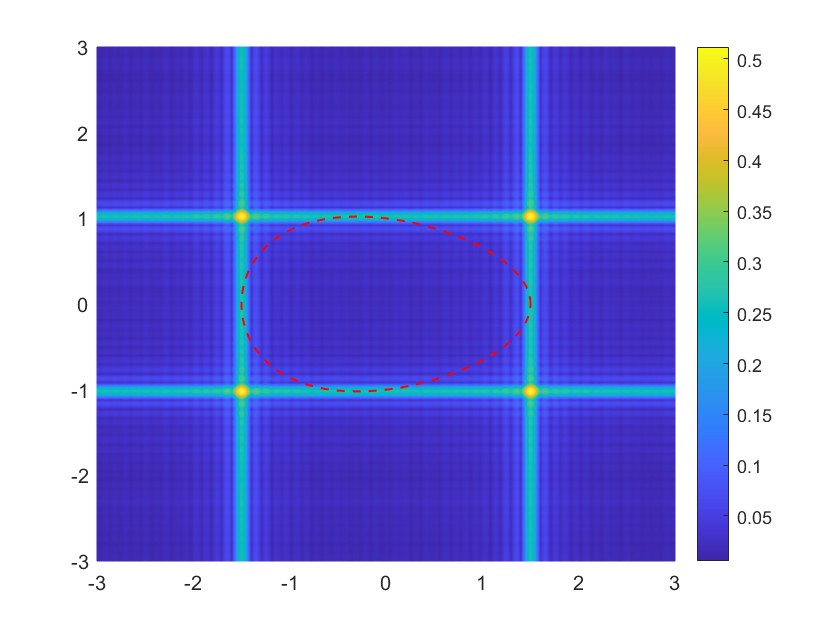}
        }\\
        \subfigure[Dirichlet case.]{
            \label{spa2-D-Egg-20-50}
            \includegraphics[width=0.2\textwidth]{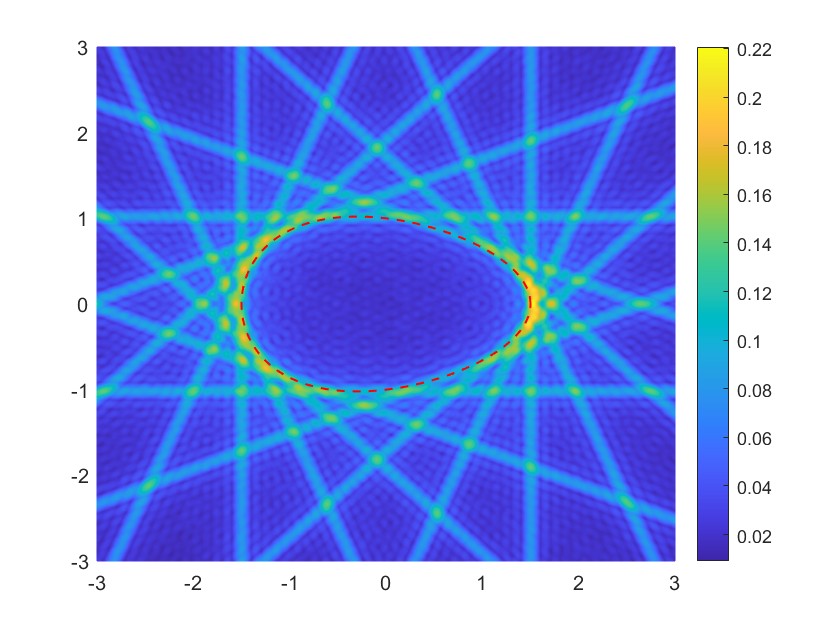}
        }&
                \subfigure[Neumann case.]{
            \label{spa2-N-Egg-20-50}
        \includegraphics[width=0.2\textwidth]{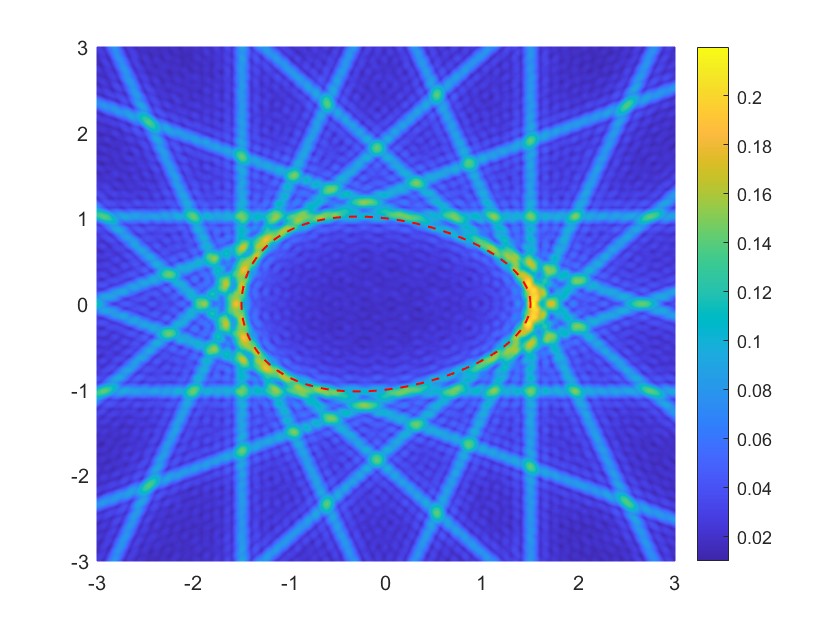}
        }&
                \subfigure[$\lambda_1$ case.]{
            \label{spa2-1S-Egg-20-50}
        \includegraphics[width=0.2\textwidth]{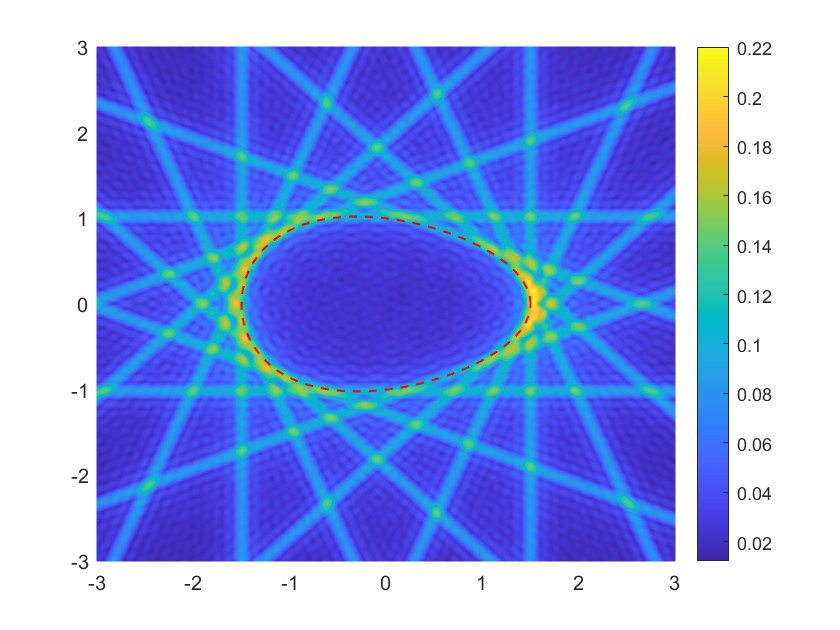}
        }&
        \subfigure[$\lambda_2$ case.]{
            \label{spa2-2SS-Egg-20-50}
            \includegraphics[width=0.2\textwidth]{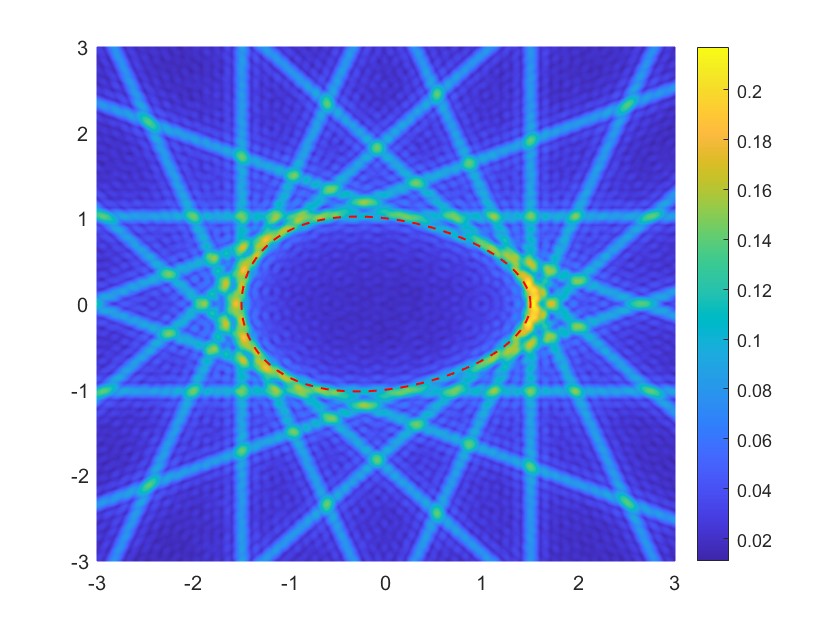}
        }\\
          \subfigure[Dirichlet case.]{
            \label{spa4-D-Egg-20-50}
            \includegraphics[width=0.2\textwidth]{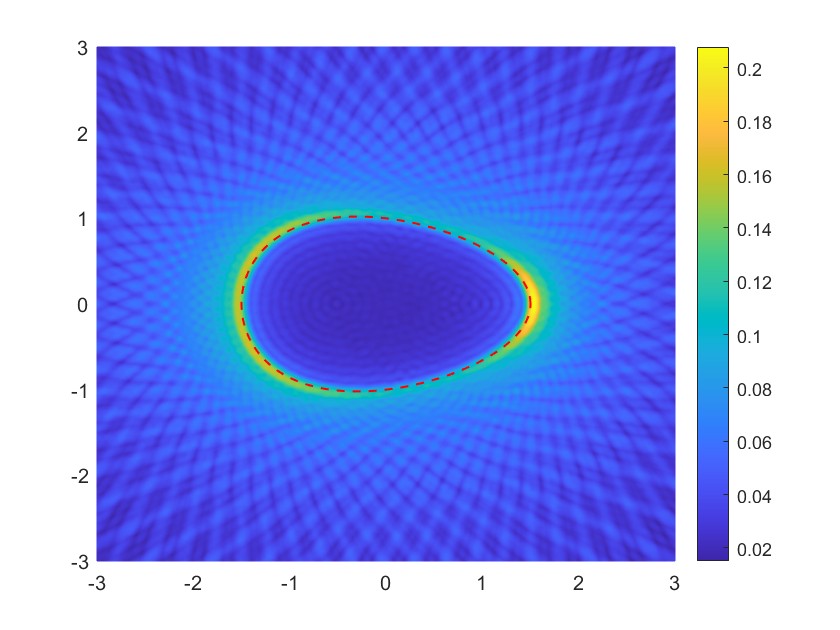}
        }&
                \subfigure[Neumann case.]{
            \label{spa4-N-Egg-20-50}
        \includegraphics[width=0.2\textwidth]{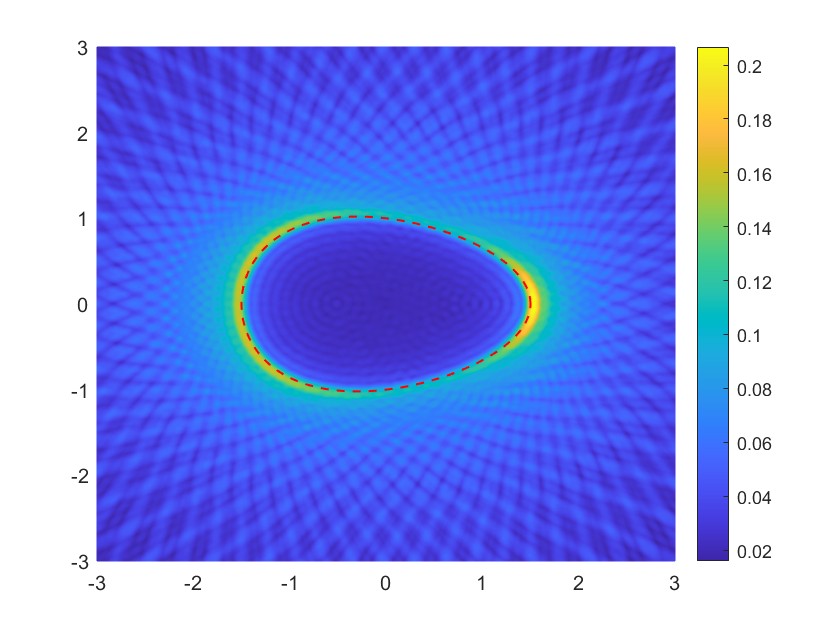}
        }&
                \subfigure[$\lambda_1$ case.]{
            \label{spa4-1S-Egg-20-50}
        \includegraphics[width=0.2\textwidth]{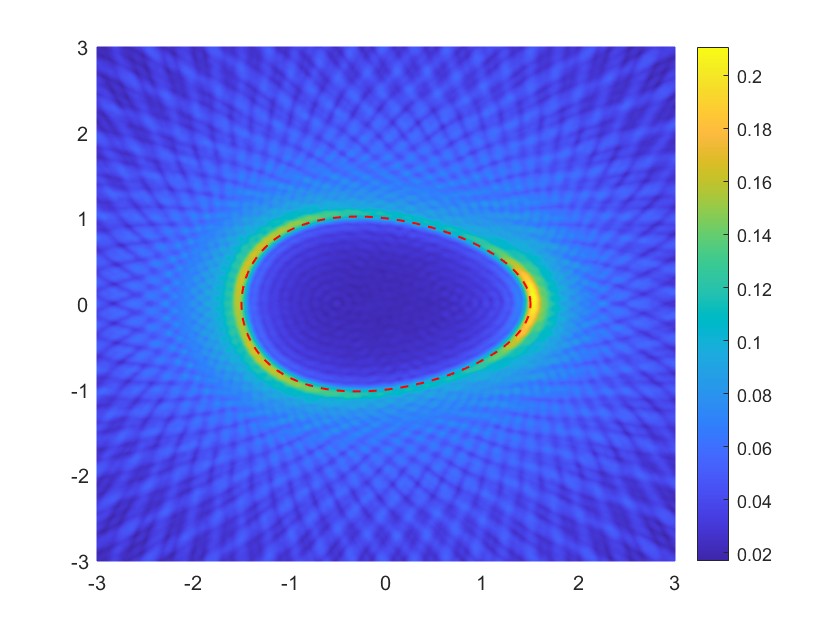}
        }&
        \subfigure[$\lambda_2$ case.]{
            \label{spa4-2SS-Egg-20-50}
            \includegraphics[width=0.2\textwidth]{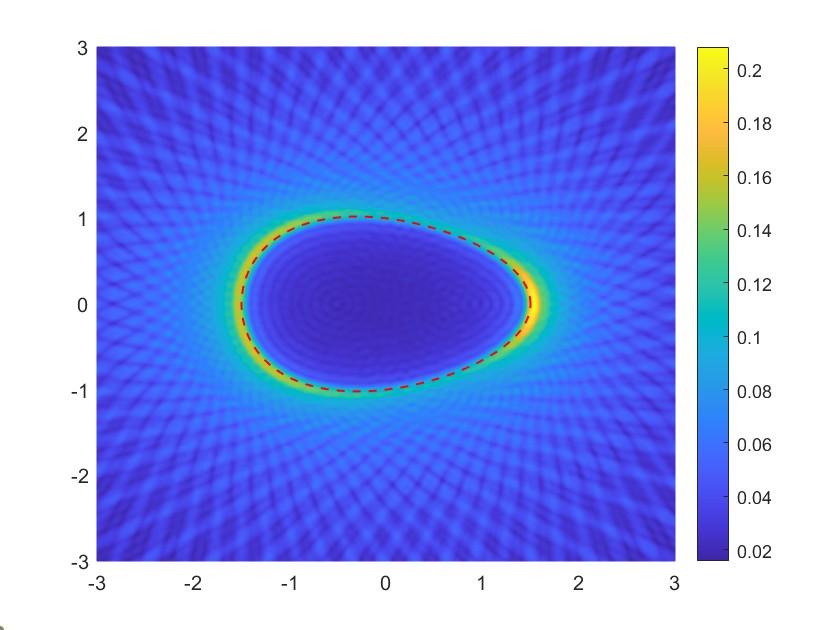}
        }
    \end{tabular}
    \caption{Reconstructions with $\mathcal{T}(z)$ in different cases  using $SD$ equally distributed directions data and  frequency band $[20, 50]$, the true boundary $\partial D$ is plotted by a red dotted curve. Top:  $SD=4$. Middle: $SD=16$. Bottom: $SD=64$.}
    \label{spa-Egg-20-50}
\end{figure}
\subsection{Reconstructions for the Kite Example}
 For nonconvex obstacles, owing to the invalidity of the Theorem \ref{thm-majda} for concave obstacles, we sometimes fail to determine whether their boundary conditions are Dirichlet or Neumann boundary conditions. Fortunately, changing the criteria \eqref{judgement-num-DorN} to the following criteria can effectively solve this problem,
 \begin{align}\label{judgement-num-DorN-concave}
     \min\limits_{\theta\in\Theta_l}|\mathcal{L}(\hx,\alpha_j)-1|<\frac{\delta}{2}=0.05.
 \end{align}
Figure \ref{jugement-Kite-20-50} verifies the validity of the new criteria \eqref{judgement-num-DorN-concave}.

 \begin{figure}[htbp]
   \centering
    \begin{tabular}{cccc}
        \subfigure[Dirichlet case.]{
            \label{jugement-D-Kite-20-50}
        \includegraphics[width=0.2\textwidth]{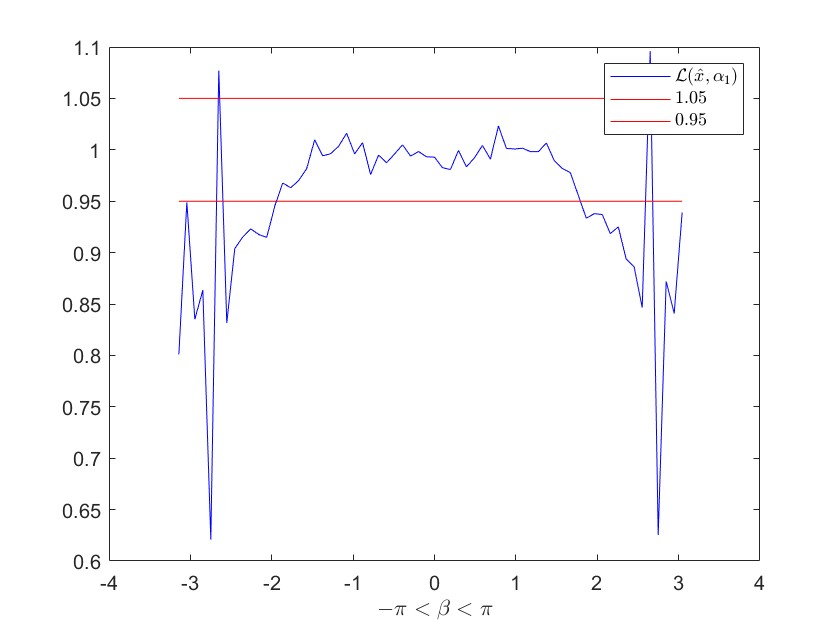}
        }&
        \subfigure[Neumann case.]{
            \label{jugement-N-Kite-20-50}
            \includegraphics[width=0.2\textwidth]{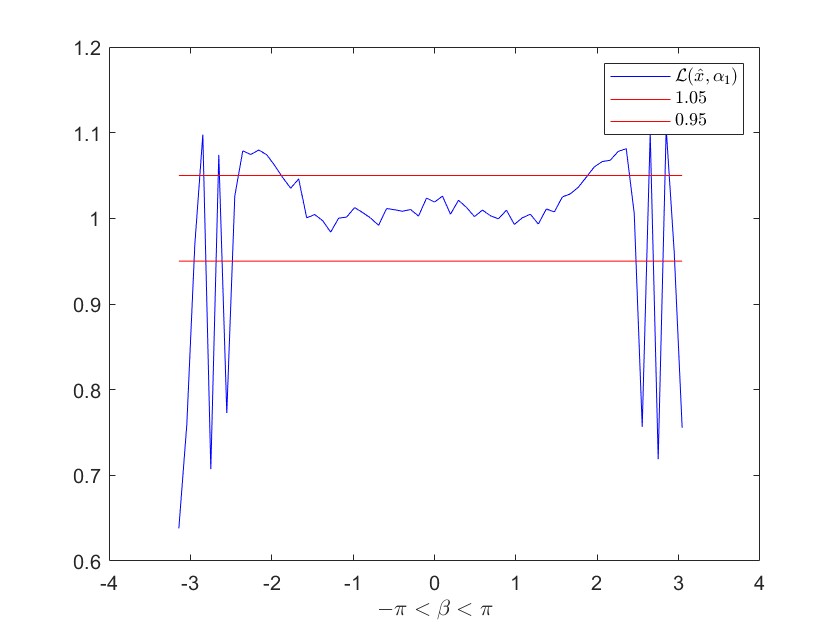}
        }&
        \subfigure[$\lambda_1$ case.]{
            \label{jugement-1S-Kite-20-50}
            \includegraphics[width=0.2\textwidth]{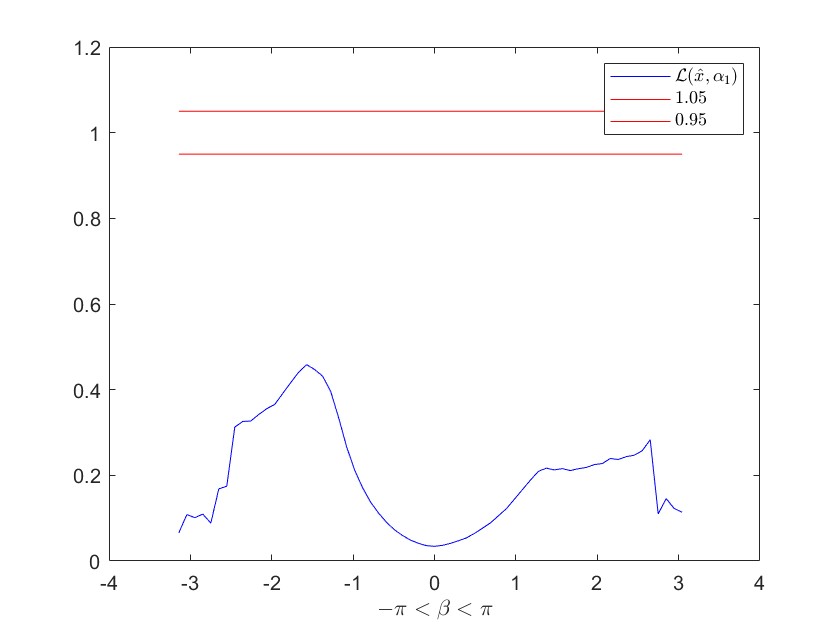}
        }&
        \subfigure[$\lambda_2$ case.]{
            \label{jugement-2SS-Kiteg-20-50}
            \includegraphics[width=0.2\textwidth]{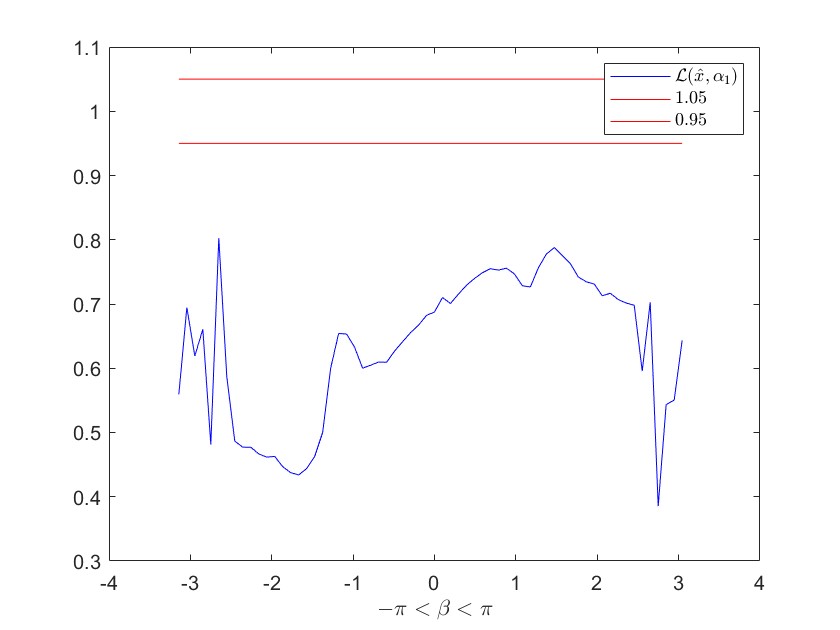}
        }
    \end{tabular}
    \caption{The values of $\mathcal{L}(\hx, \alpha_1)$ for four different boundary conditions with frequency band $[20, 50]$.}
    \label{jugement-Kite-20-50}
\end{figure} 

Figure \ref{lambda-kite}  shows the reconstruction of $\lambda(\mathcal{G}(-\hx,\hx))=\lambda(\beta),\ -\pi<\beta<\pi$ using \eqref{reconstruct-lambda}. Note that mapping $\mathcal{G}(\beta)$ is not injective with respect to $\beta$. Denote by $\Lambda_D\subset(-\pi,\pi)$ the component such that $\mathcal{G}$ is injective. We observe that the impedance coefficient $\lambda$ can be well reconstructed only in some subset of $\Lambda_D$.
\begin{figure}[htbp]
   \centering
    \begin{tabular}{cc}
        \subfigure[Comparison for $\lambda_1$.]{
            \label{lambda1-kite}
        \includegraphics[width=0.3\textwidth]{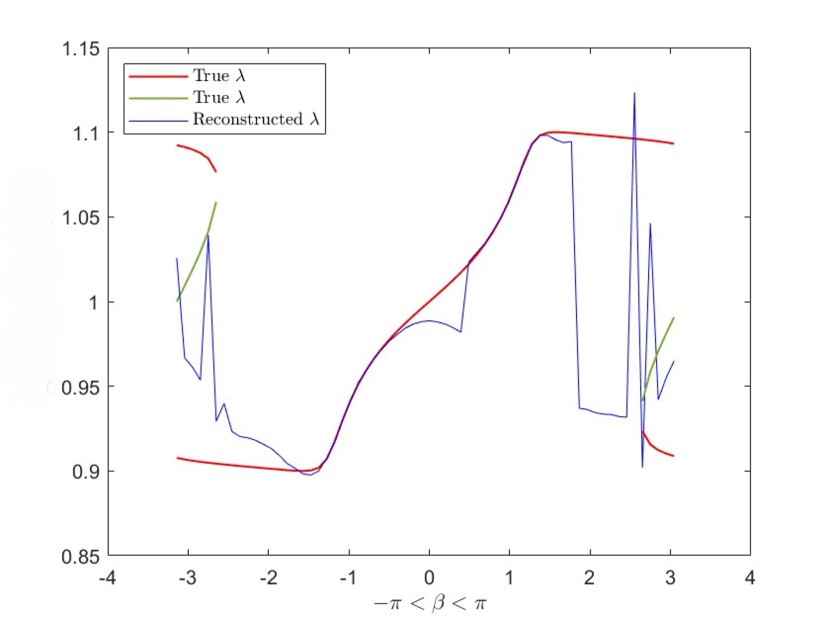}
        }&
        \subfigure[Comparison for  $\lambda_2$.]{
            \label{lambda2-kite}
            \includegraphics[width=0.3\textwidth]{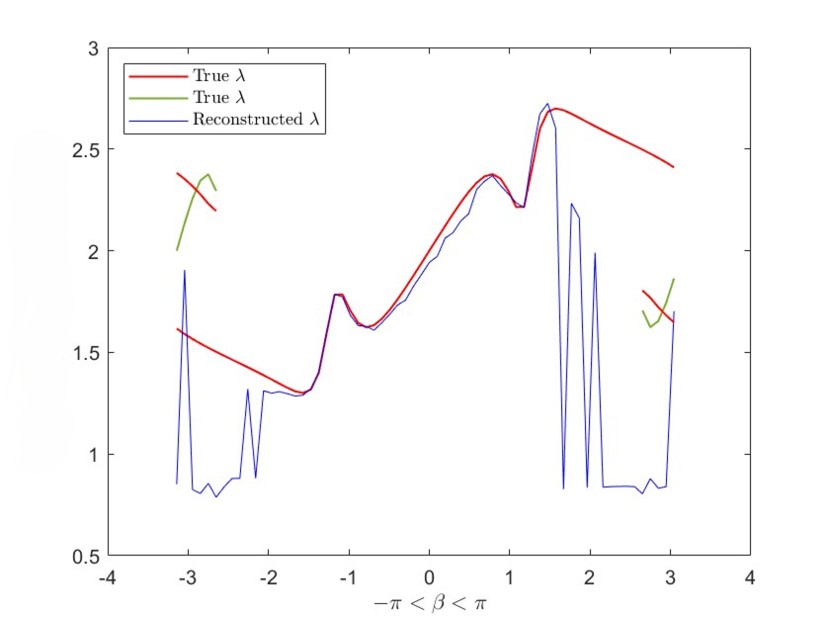}
        }
    \end{tabular}
    \caption{True and reconstructed $\lambda_1$ and $\lambda_2$ using \eqref{reconstruct-lambda} with frequency band $[20, 50]$.}
    \label{lambda-kite}
\end{figure}

Figures \ref{RD-Kite-20-50} and \ref{RD-Kite-5-30} illustrate that the indicator function $\mathcal{I}(z)$ and the criteria \eqref{I+I-DN} can be used directly in concave situations. Surprisingly, $\mathcal{I}(z)$ performs very well in the Dirichlet and Neumann situations.  Considering the poor reconstruction of $\lambda$ on $(-\pi,\pi)\backslash\Lambda_D$, such reconstruction of $\partial D$ for $\lambda_1$ and $\lambda_2$ case is acceptable. An interesting phenomenon worth discussing here is that for those $\lambda$ near the concave structure, our algorithm always obtains the wrong solution in \eqref{reconstruct-lambda}, note that for larger $\lambda>1$, $\lambda_{\alpha_j,\hx}^{(2)}\approx\sqrt{\hx\cdot\hx_j}$ is always the wrong solution. Thus, the wrong calculation $\gamma(\hx)^{-1}$ becomes large. It implies that the indicator function $\mathcal{I}(z)$ can properly reconstruct concave structures if the concave structure has a larger impedance coefficient $\lambda$, as shown in Figure \ref{RD-2SS-Kite-5-30} and \ref{TRD-2SS-Kite-5-30}.
\begin{figure}[htbp]
   \centering
    \begin{tabular}{cccc}
        \subfigure[Dirichlet case.]{
            \label{RD-D-Kite-20-50}
        \includegraphics[width=0.2\textwidth]{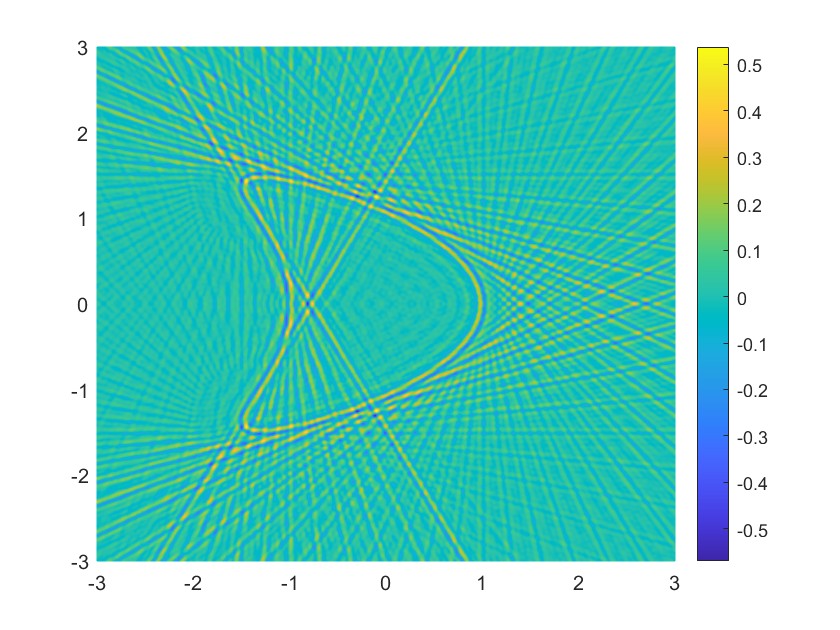}
        }&
                \subfigure[Neumann case.]{
            \label{RD-N-Kite-20-50}
        \includegraphics[width=0.2\textwidth]{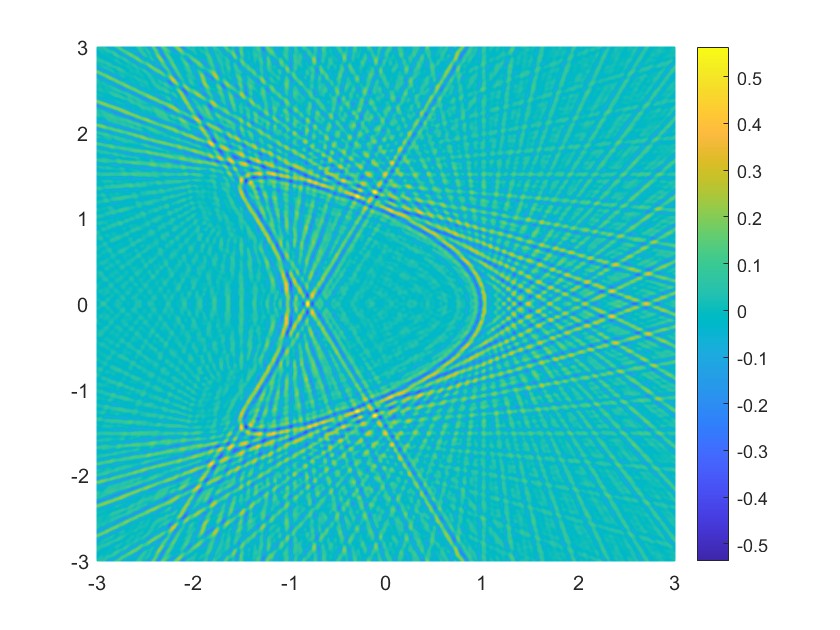}
        }&
                \subfigure[$\lambda_1$ case.]{
            \label{RD-1S-Kite-20-50}
        \includegraphics[width=0.2\textwidth]{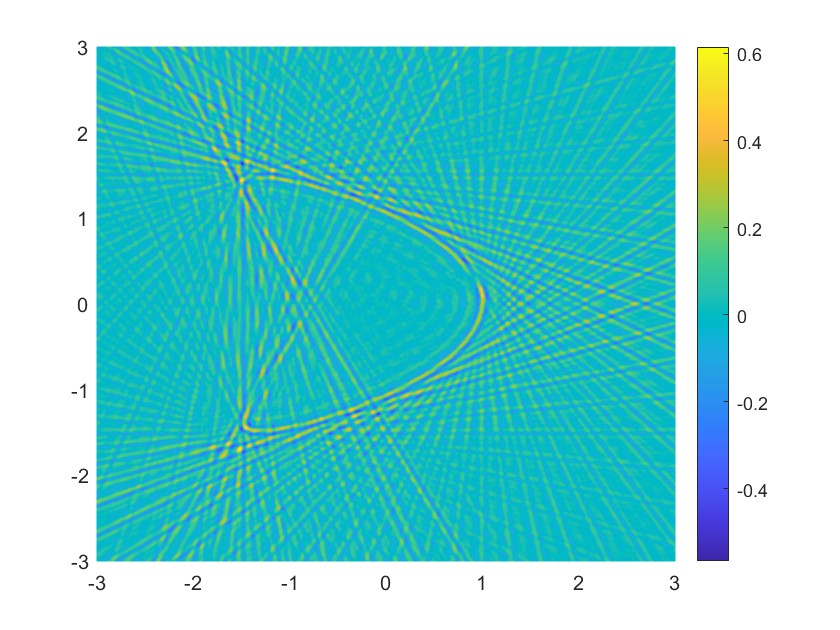}
        }&
        \subfigure[$\lambda_2$ case.]{
           \label{RD-2SS-Kite-20-50}
            \includegraphics[width=0.2\textwidth]{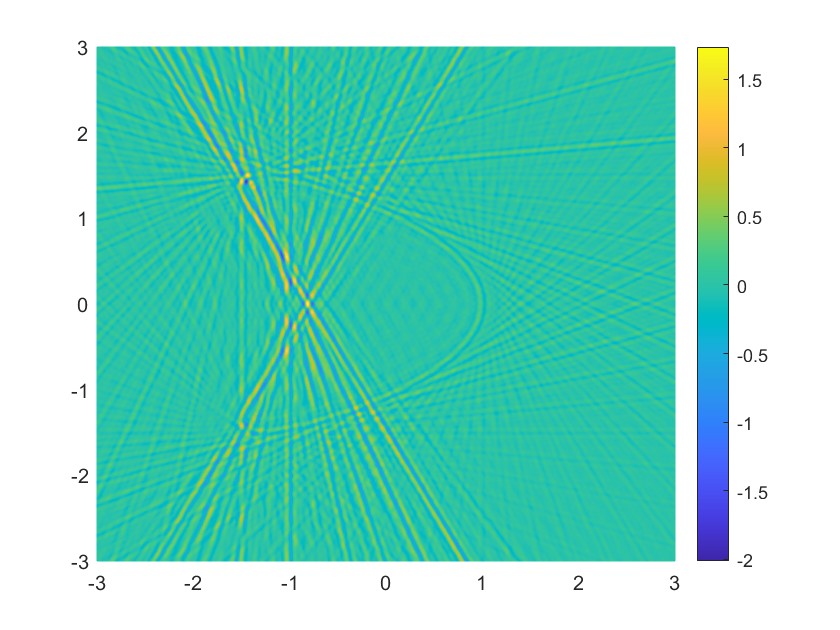}
        }\\
        \subfigure[Dirichlet case.]{
            \label{TRD-D-Kite-20-50}
            \includegraphics[width=0.2\textwidth]{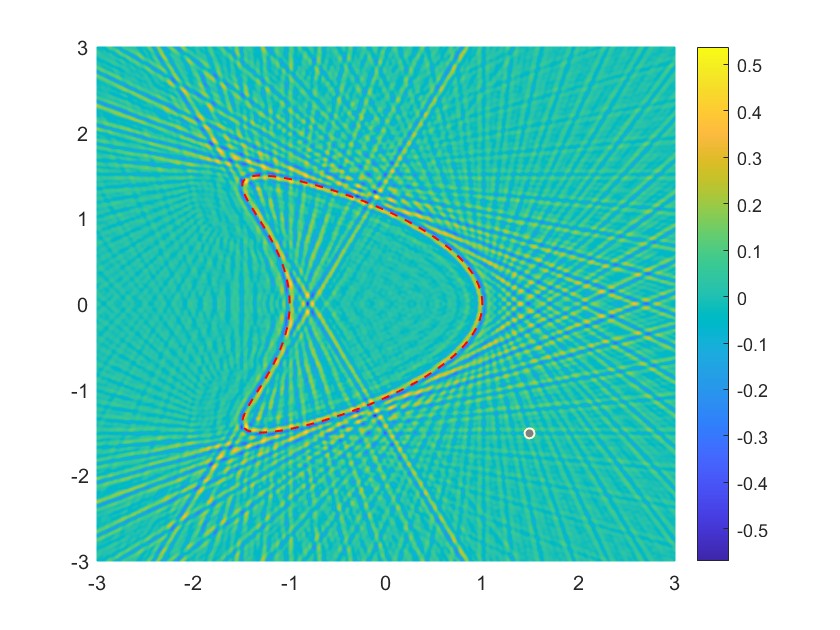}
        }&
                \subfigure[Neumann case.]{
            \label{TRD-N-Kite-20-50}
        \includegraphics[width=0.2\textwidth]{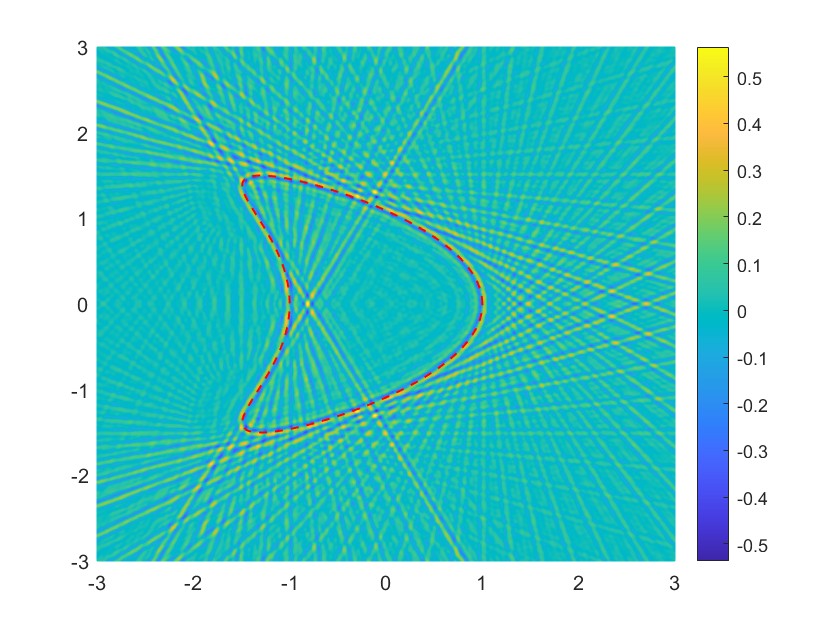}
        }&
                \subfigure[$\lambda_1$ case.]{
            \label{TRD-1S-Kite-20-50}
        \includegraphics[width=0.2\textwidth]{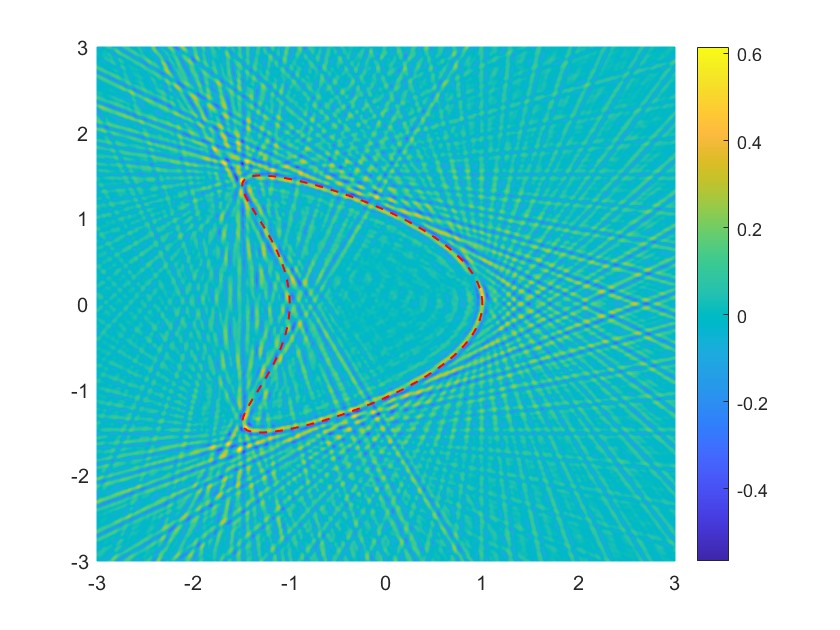}
        }&
        \subfigure[$\lambda_2$ case.]{
            \label{TRD-2SS-Kite-20-50}
            \includegraphics[width=0.2\textwidth]{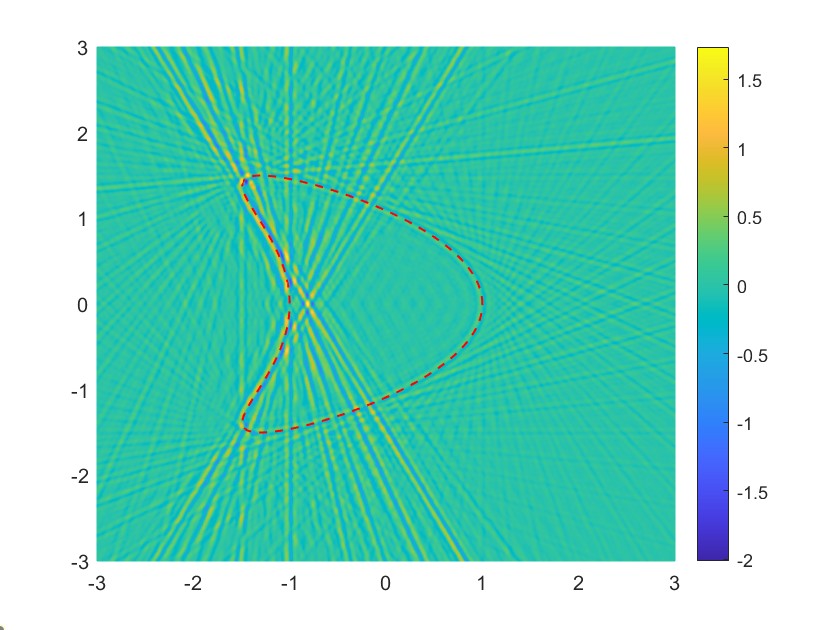}
        }
    \end{tabular}
    \caption{Reconstructions of $\partial D$ with $\mathcal{I}(z)$ in different case using frequency band $[20, 50]$. Top: Plot of $\mathcal{I}(z)$. Bottom: Comparison with true $\partial D$(red dotted curve).}
    \label{RD-Kite-20-50}
\end{figure}
\begin{figure}[htbp]
   \centering
    \begin{tabular}{cccc}
        \subfigure[Dirichlet case.]{
            \label{RD-D-Kite-5-30}
        \includegraphics[width=0.2\textwidth]{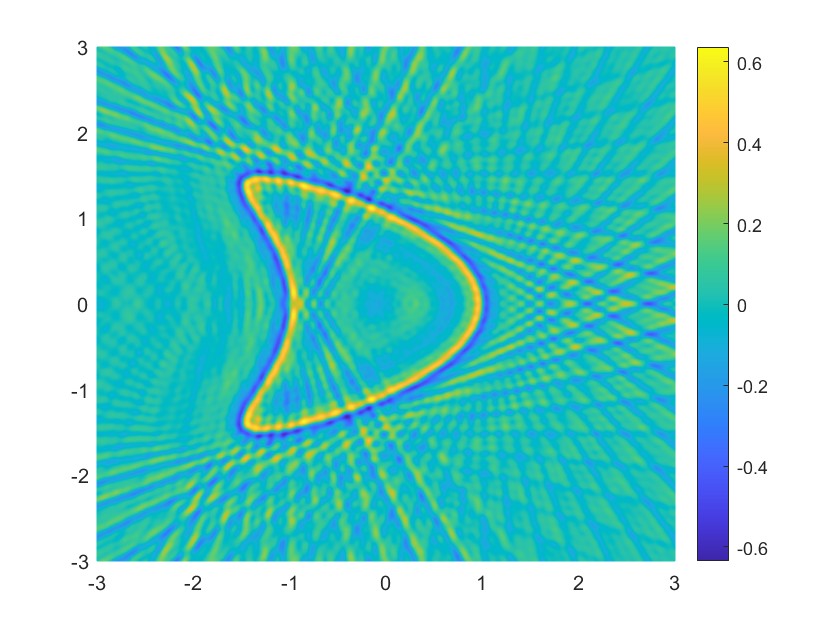}
        }&
                \subfigure[Neumann case.]{
            \label{RD-N-Kite-5-30}
        \includegraphics[width=0.2\textwidth]{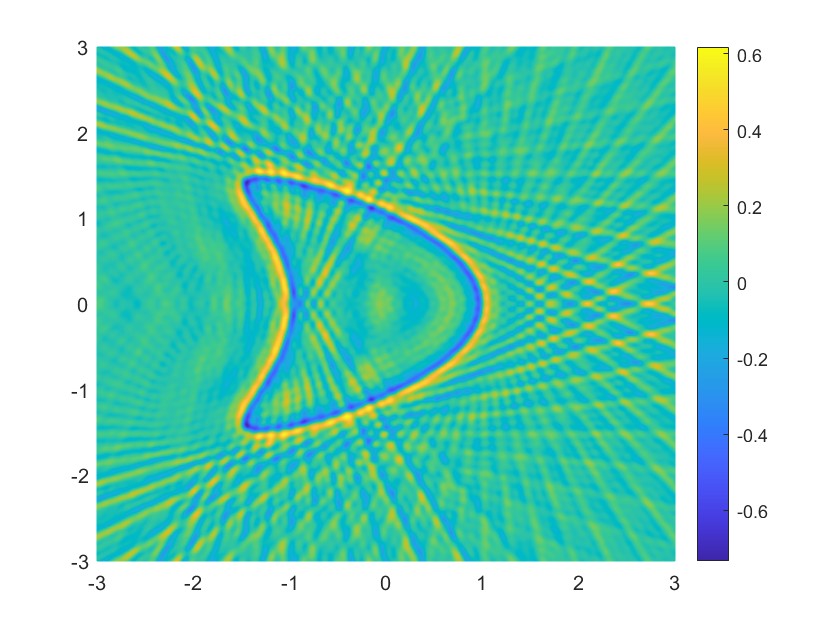}
        }&
                \subfigure[$\lambda_1$ case.]{
            \label{RD-1S-Kite-5-30}
        \includegraphics[width=0.2\textwidth]{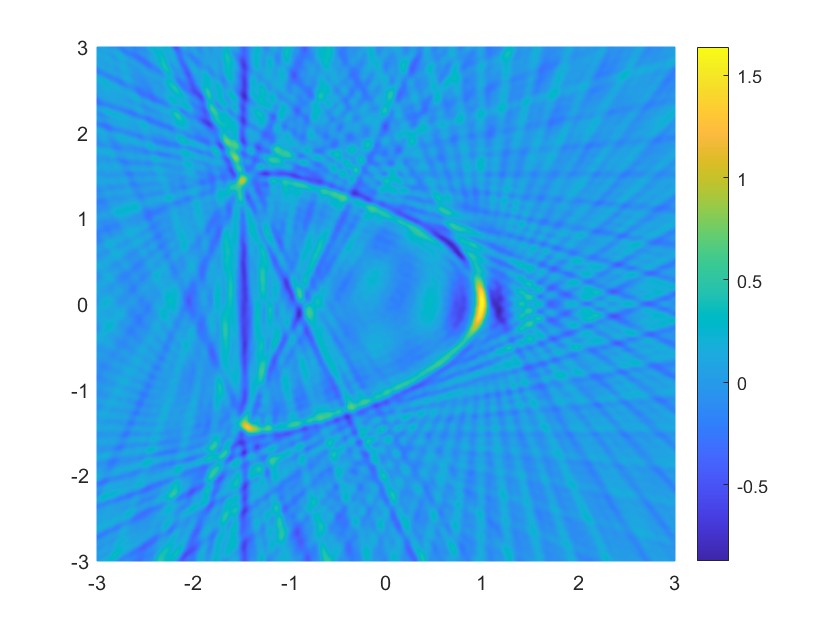}
        }&
        \subfigure[$\lambda_2$ case.]{
           \label{RD-2SS-Kite-5-30}
            \includegraphics[width=0.2\textwidth]{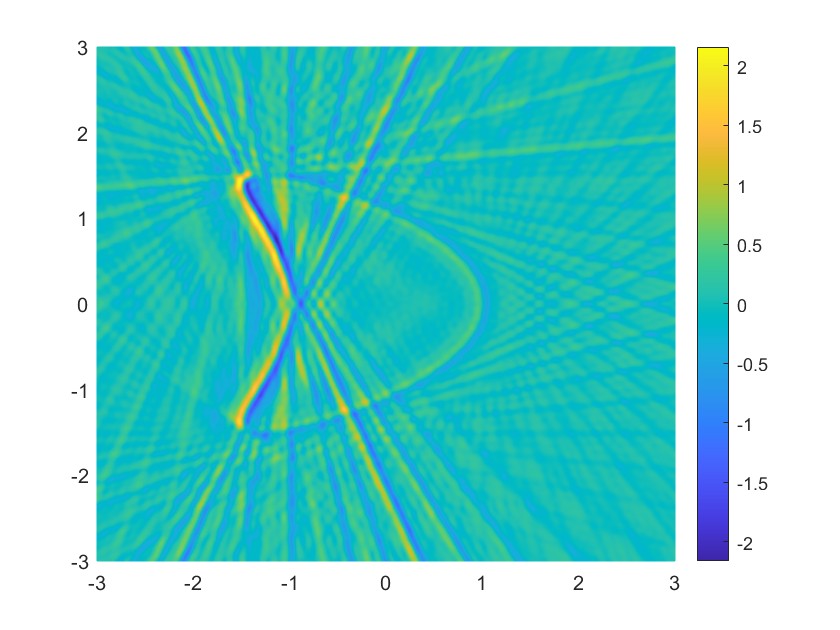}
        }\\
        \subfigure[Dirichlet case.]{
            \label{TRD-D-Kite-5-30}
            \includegraphics[width=0.2\textwidth]{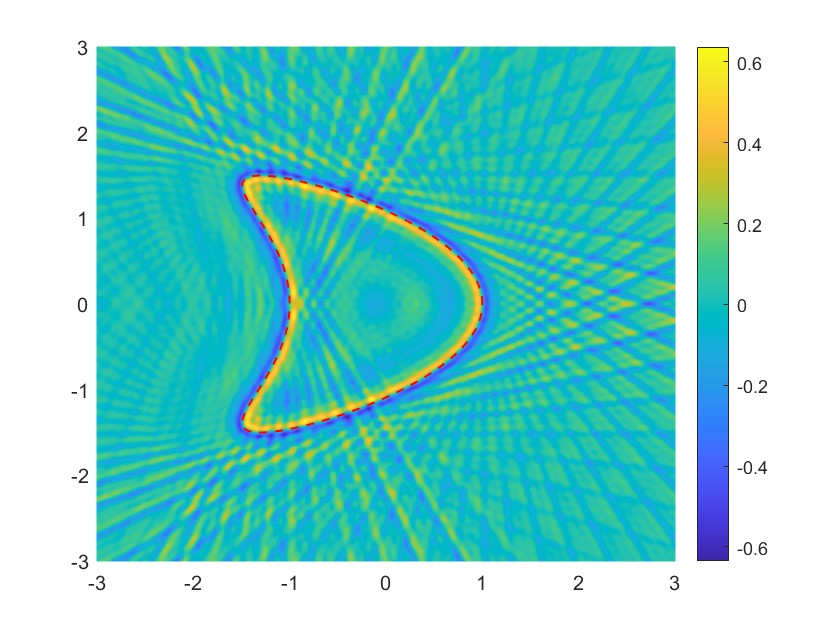}
        }&
                \subfigure[Neumann case.]{
            \label{TRD-N-Kite-5-30}
        \includegraphics[width=0.2\textwidth]{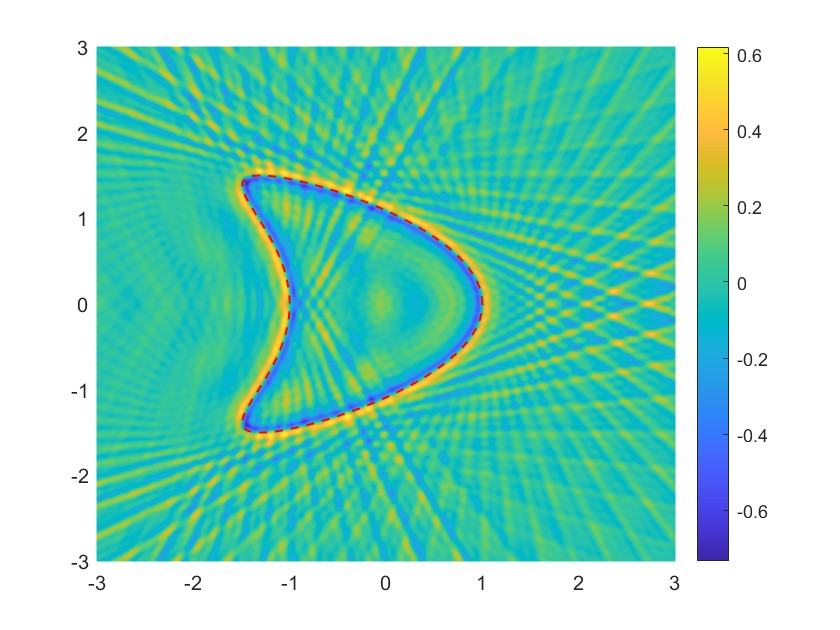}
        }&
                \subfigure[$\lambda_1$ case.]{
            \label{TRD-1S-Kite-5-30}
        \includegraphics[width=0.2\textwidth]{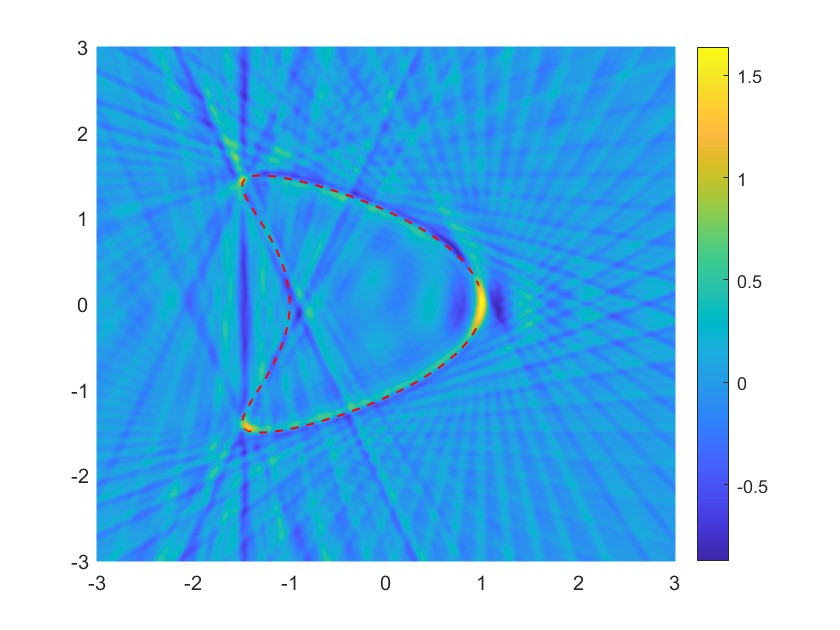}
        }&
        \subfigure[$\lambda_2$ case.]{
            \label{TRD-2SS-Kite-5-30}
            \includegraphics[width=0.2\textwidth]{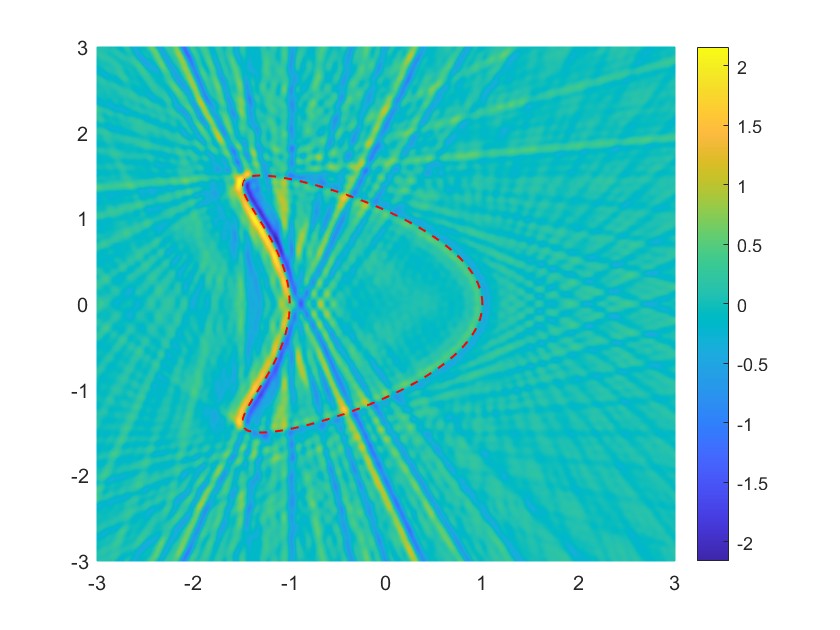}
        }
    \end{tabular}
    \caption{Reconstructions of $\partial D$ with $\mathcal{I}(z)$ in different case using frequency band $[5, 30]$. Top: Plot of $\mathcal{I}(z)$. Bottom: Comparison with true $\partial D$(red dotted curve).}
    \label{RD-Kite-5-30}
\end{figure}

Figure \ref{lambda-Kite-30del} demonstrates that we can obtain a better reconstruction for $\lambda$ via the method \eqref{sovlelambda-num-NeD} with known $\partial D$. Note that $\lambda$ received poor reconstruction near $\beta=-\pi$ because $\mathcal{G}$ is not injective here.
\begin{figure}[htbp]
   \centering
    \begin{tabular}{cc}
     \subfigure[Comparison for $\lambda_1$.]{
            \label{lambda1-Kite-30del-NeD}
        \includegraphics[width=0.3\textwidth]{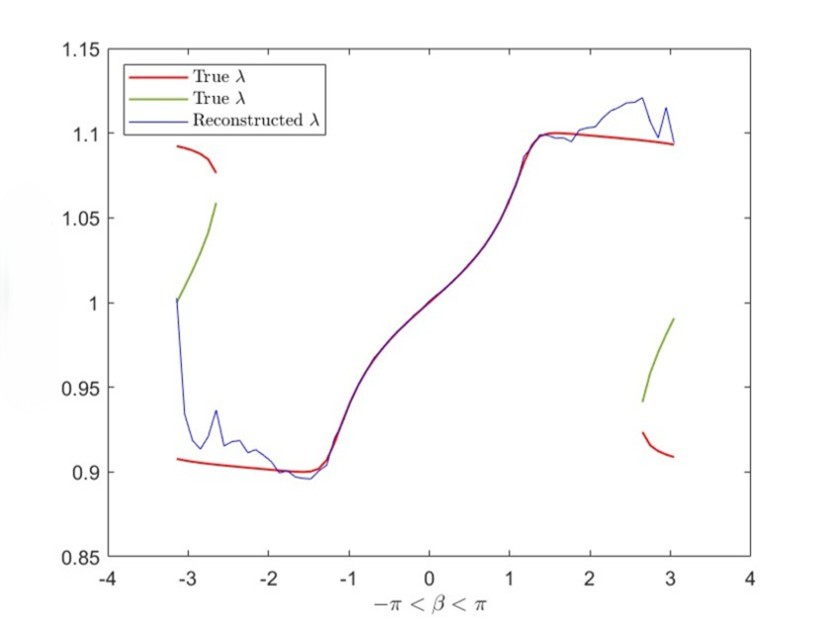}
        }&
        \subfigure[Comparison for  $\lambda_2$.]{
            \label{lambda2-Kite-30del-NeD}
            \includegraphics[width=0.3\textwidth]{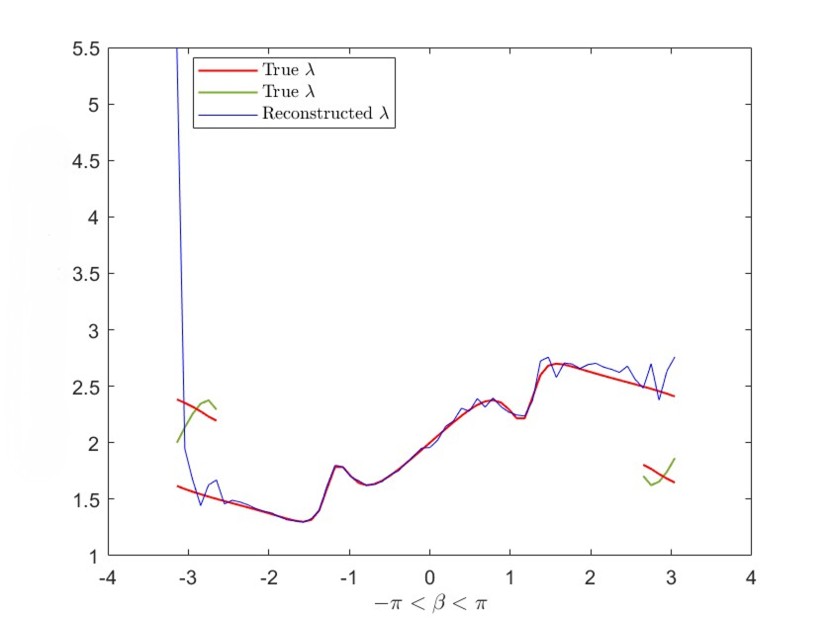}
        }
    \end{tabular}
    \caption{True $\lambda_1$ and $\lambda_2$ and reconstructed by  \eqref{sovlelambda-num-NeD} 
 with $\delta=0.3$ using frequency band $[20, 50]$.}
    \label{lambda-Kite-30del}
\end{figure}

Figure \ref{spa-Kite-20-50} shows that the indicator $\mathcal{T}$ can reconstruct concave obstacles and has fairly good performance. The indicator $\mathcal{T}$ is insensitive to changes in $\lambda$ but is susceptible to the influence of high-curvature structures.
\begin{figure}[htbp]
   \centering
    \begin{tabular}{cccc}
        \subfigure[Dirichlet case.]{
            \label{spa1-D-Kite-20-50}
        \includegraphics[width=0.2\textwidth]{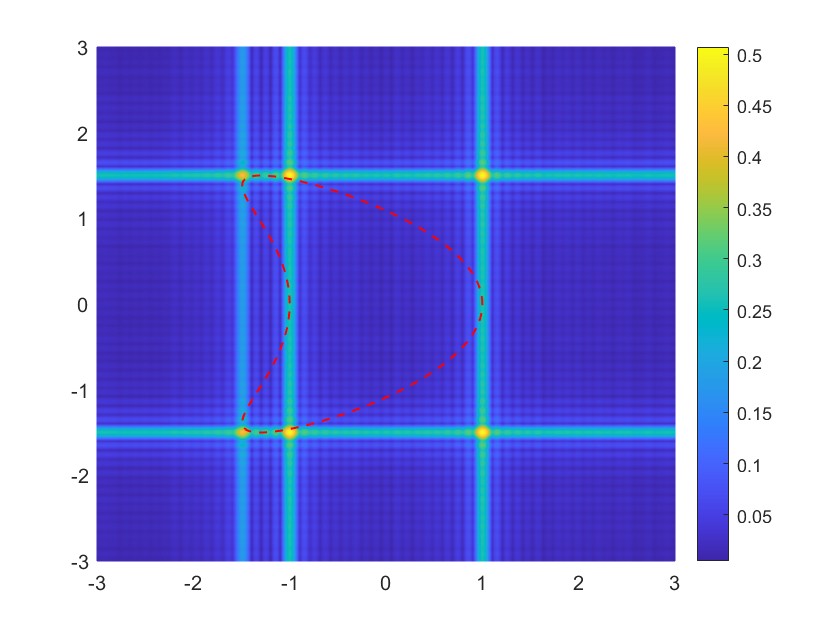}
        }&
                \subfigure[Neumann case.]{
            \label{spa1-N-Kite-20-50}
        \includegraphics[width=0.2\textwidth]{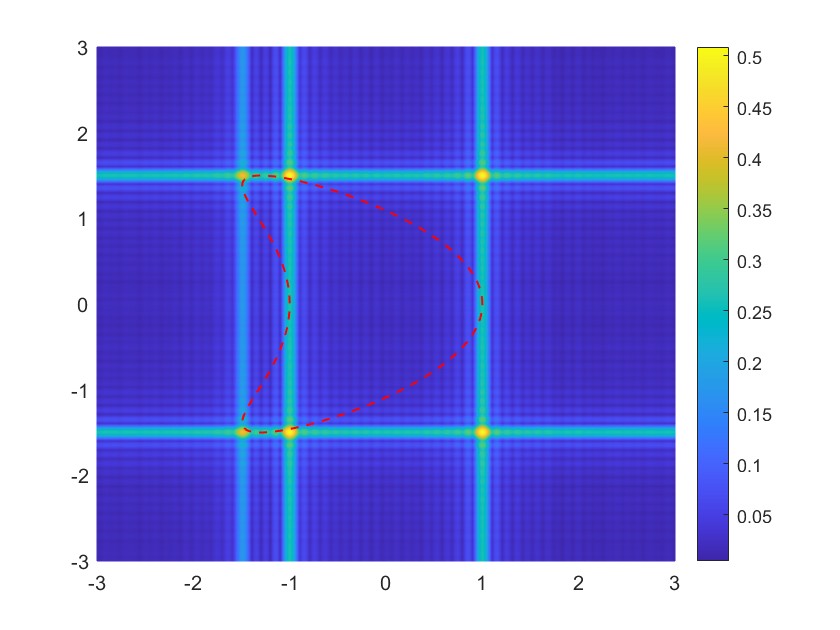}
        }&
                \subfigure[$\lambda_1$ case.]{
            \label{spa1-1S-Kite-20-50}
        \includegraphics[width=0.2\textwidth]{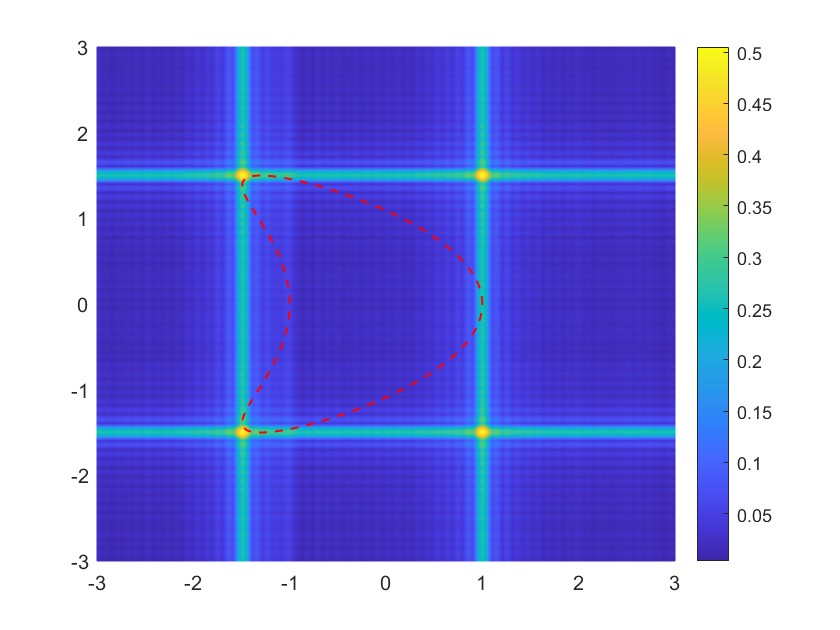}
        }&
        \subfigure[$\lambda_2$ case.]{
           \label{spa1-2SS-Kite-20-50}
            \includegraphics[width=0.2\textwidth]{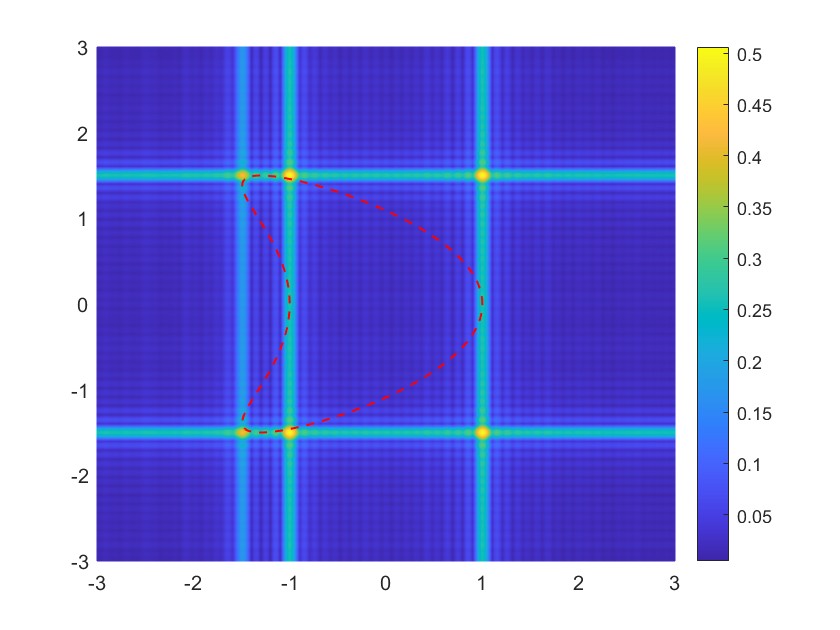}
        }\\
        \subfigure[Dirichlet case.]{
            \label{spa2-D-Kite-20-50}
            \includegraphics[width=0.2\textwidth]{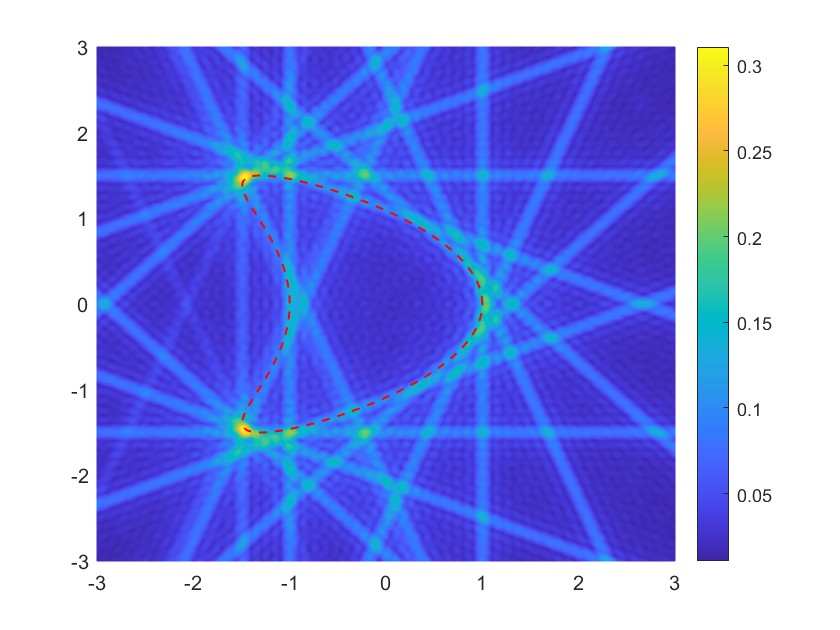}
        }&
                \subfigure[Neumann case.]{
            \label{spa2-N-Kite-20-50}
        \includegraphics[width=0.2\textwidth]{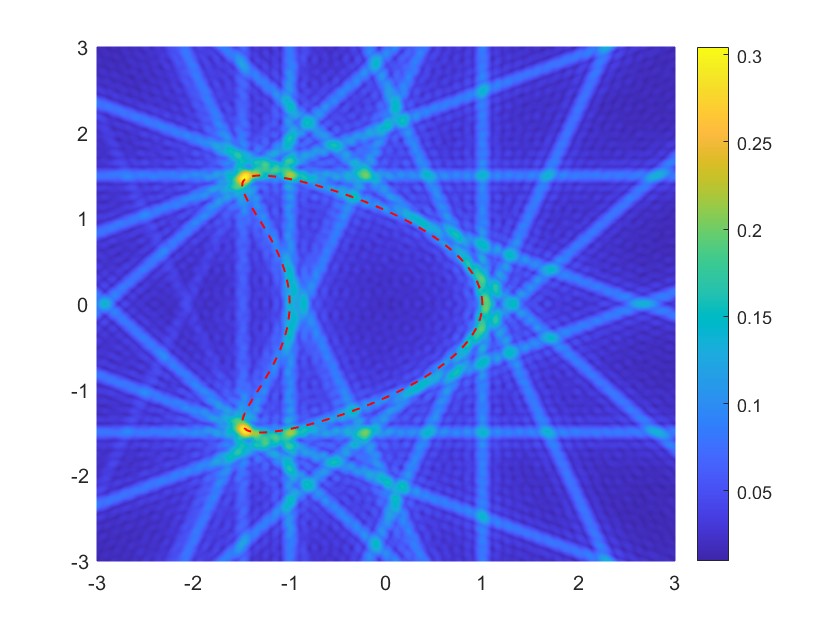}
        }&
                \subfigure[$\lambda_1$ case.]{
            \label{spa2-1S-Kite-20-50}
        \includegraphics[width=0.2\textwidth]{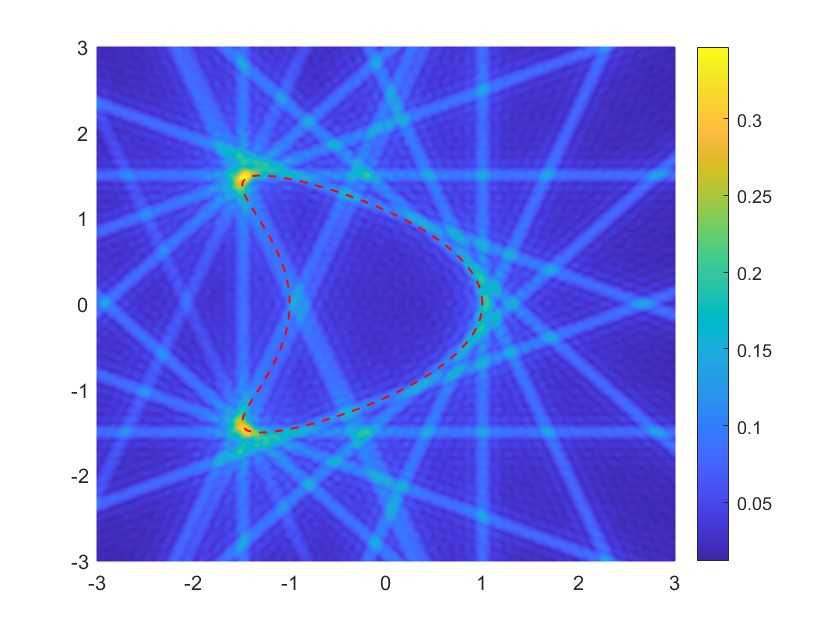}
        }&
        \subfigure[$\lambda_2$ case.]{
            \label{spa2-2SS-Kite-20-50}
            \includegraphics[width=0.2\textwidth]{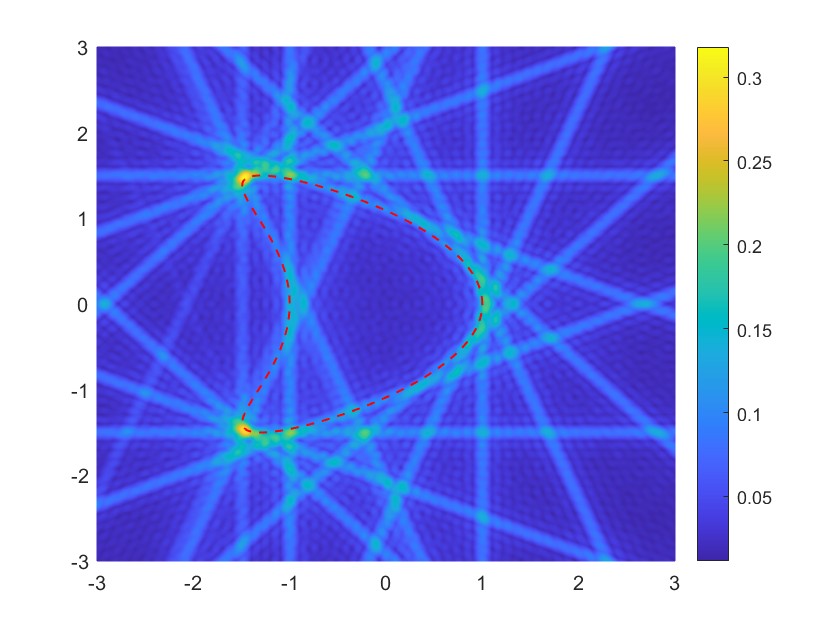}
        }\\
          \subfigure[Dirichlet case.]{
            \label{spa4-D-Kite-20-50}
            \includegraphics[width=0.2\textwidth]{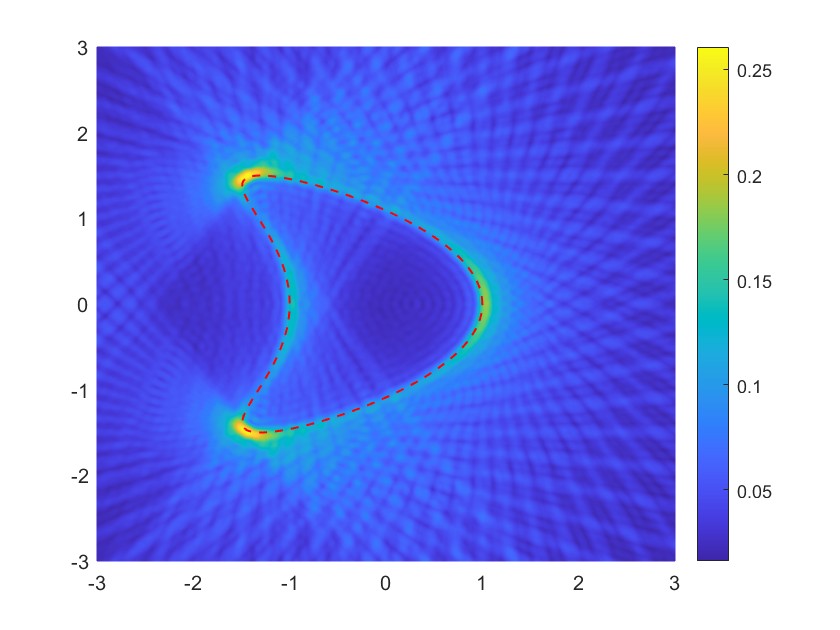}
        }&
                \subfigure[Neumann case.]{
            \label{spa4-N-Kite-20-50}
        \includegraphics[width=0.2\textwidth]{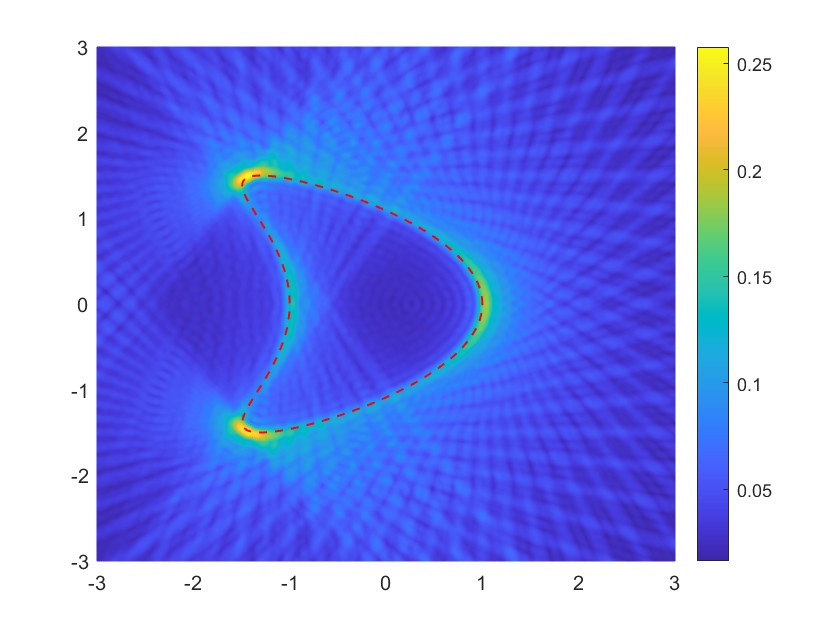}
        }&
                \subfigure[$\lambda_1$ case.]{
            \label{spa4-1S-Kite-20-50}
        \includegraphics[width=0.2\textwidth]{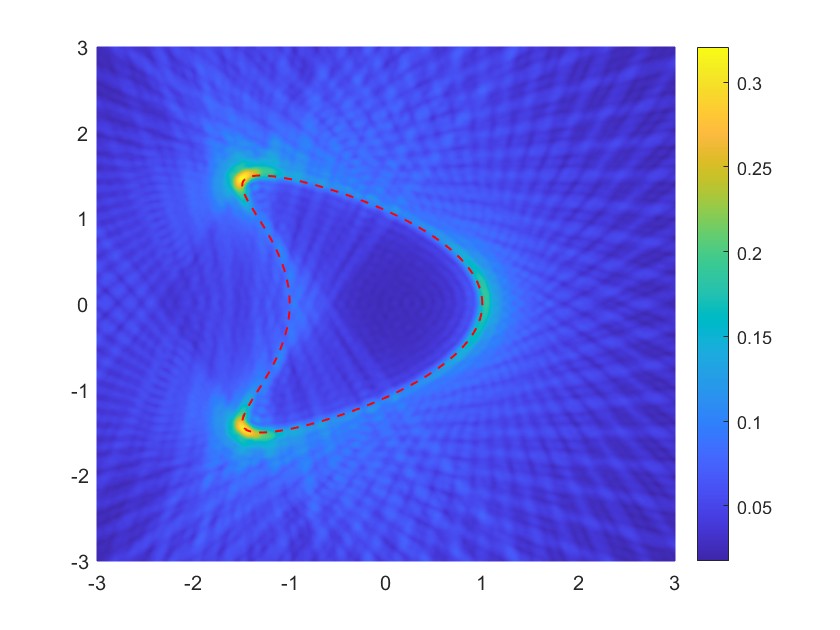}
        }&
        \subfigure[$\lambda_2$ case.]{
            \label{spa4-2SS-Kite-20-50}
            \includegraphics[width=0.2\textwidth]{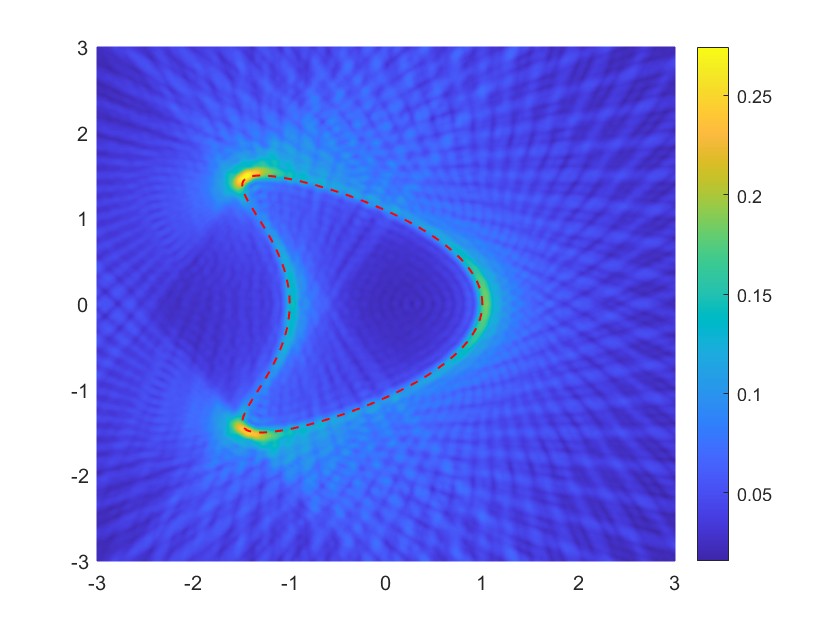}
        }
    \end{tabular}
    \caption{Reconstructions with $\mathcal{T}(z)$ in different cases using $SD$ equally distributed directions data and  frequency band $[20, 50]$, the true boundary $\partial D$ is plotted by a red dotted curve. Top:  $SD=4$. Middle: $SD=16$. Bottom: $SD=64$.}
    \label{spa-Kite-20-50}
\end{figure}

\section{Concluding remarks}

This paper presents a uniqueness result and a non-iterative numerical approach for identifying an obstacle from the multi-frequency backscattering far-field data. 

Our theoretical development builds upon Majda's seminal work, which we extend to cover impedance boundary conditions in two-dimensional scenarios. As a consequence, we derive a high-wavenumber asymptotic expansion of the far-field pattern for strictly convex obstacles. The result covers arbitrary boundary conditions in both two and three dimensions. Most significantly, under the convexity assumption, our analysis shows that backscattering far-field data alone suffice to simultaneously determine both the boundary condition and the geometric shape of the obstacle. To the best of our knowledge, this is the first uniqueness result from the multi-frequency backscattering far-field data.

The inverse obstacle scattering problem exhibits significant ill-posedness 
and nonlinearity. To circumvent this fundamental challenge, we introduce a two-stage reconstruction strategy: first decoupling the impedance parameter estimation from geometric constraints (albeit requiring supplementary multi-directional backscattering data), followed by boundary reconstruction through our innovative sampling-based imaging technique. The numerical simulations demonstrate the exceptional performance of our proposed methodology, successfully reconstructing all key characteristics of the scatterer - including its spatial location, geometric configuration, and impedance parameter - with high accuracy. 

Notably, our algorithm maintains its effectiveness even when applied to concave obstacle geometries, a particularly challenging class of inverse problems. While these empirical results are highly promising, we acknowledge that the underlying theoretical foundations remain to be rigorously established. This critical theoretical aspect will constitute the focus of our subsequent research publications.

\section*{Acknowledgement}
 The research of X. Liu is supported by the National Key R\&D Program of China under grant 2024YFA1012303 and the NNSF of China under grant 12371430. 
 
\bibliographystyle{SIAM}

\begin{thebibliography}{99}

\bibitem{ArensJiLiu2020}
T. Arens, X. Ji and X. Liu, Inverse electromagnetic obstacle scattering problems with multi-frequency sparse backscattering far field data, {\it Inverse Probl. \bf36(10)}, (2020), 105007.





\bibitem{BeilKlib}
L. Beilina and M. V. Klibanov, A new approximate mathematical model for global convergence for acoefficient inverse problem with backscattering data, {\it J. Inverse Ill-Posed Probl. \bf20(4)}, (2012), 513–565.

\bibitem{Bojarski}
N. Bojarski, A survey of the physical optics inverse scattering identity, {\it IEEE Trans. Antennas Propag. \bf20(5)} (1982), 980-989.

\bibitem{CaconiColton2004}
F. Cakoni and D. Colton, The determination of the surface impedance of a partially coated obstacle from far field data, {\it SIAM J. Appl. Math. \bf64}, (2004), 709–723.

\bibitem{CaconiColtonMonk2010}
F. Caconi, D. Colton and P. Monk, The determination of boundary coefficients from far field measurements, {\it 	J. Integr. Equations Appl. \bf22(2)} (2010),  167-191.

\bibitem{ChanGLS2012} S. N. Chandler-Wilde, I. G. Graham, S. Langdon and E.A. Spence, Numerical asymptotic boundary integral methods in high-frequency acoustic scattering, {\it Acta Numer. \bf21} (2012), 89-305.




\bibitem{CCHuang}
J. Chen, Z. Chen and G. Huang, Reverse time migration for extended obstacles: acoustic Waves, {\it Inverse Probl. \bf29} (2013), 085005.

\bibitem{ChengLiuNaka2003}
J. Cheng, J. Liu and G. Nakamura, Recovery of the shape of an obstacle and the boundary impedance from the far field pattern, {\it Kyoto J. Math. \bf43(1)}, (2003), 165-186.

\bibitem{Christiansen2013}
T. J. Christiansen, Inverse obstacle problems with backscattering or generalized backscattering data in one or two directions, {\it Asymptot. Anal. \bf81}, (2013), 315-335.

\bibitem{Colton1982}
D. Colton, Stable methods for determining the surface impedance of an obstacle from low frequency far field data, {\it Appl. Anal. \bf14}, (1982), 61–70.

\bibitem{ColtonKirsch1981}
D. Colton and A. Kirsch, The determination of the surface impedance of an obstacle from measurements of the far field pattern, {\it SIAM J. Appl. Math. \bf41}, (1981), 8–15.

\bibitem{ColtonKirsch}
D. Colton and A. Kirsch, A simple method for solving inverse scattering problems in the resonance region, {\it Inverse Probl. \bf12}, (1996), 383-393.

\bibitem{CK}D. Colton and R. Kress, {\it Inverse Acoustic and Electromagnetic Scattering Theory} (Fourth Edition), Springer, Berlin, 2019.

\bibitem{ColtonMonk}D. Colton and P. Monk, Target identification of coated objects, {\it IEEE Trans. Antennas Propagat. \bf54}, (2006), 1232-1242.


\bibitem{DouLiuMengZhang}F. Dou, X. Liu, S. Meng and B. Zhang, Data completion algorithms and their applications in inverse acoustic scattering with limited-aperture backscattering data, {\it J. Comput. Phys. \bf469}, (2022), 111550.

\bibitem{EskinRalston3}
G. Eskin and J. Ralston, The inverse backscattering problem in three dimensions, {\it Commun. Math. Phys. \bf124} (1989), 169-215.

\bibitem{EskinRalston2}
G. Eskin and J. Ralston, Inverse backscattering problem in two dimensions, {\it Commun. Math. Phys. \bf138} (1991), 456-486.


\bibitem{GriesHS2013}
R. Griesmaier, N. Hyv{\"o}nen and O. Seiskari, A note on analyticity properties of far field patterns, {\it Inverse Probl. Imag. \bf7}, (2013), 491–498.






\bibitem{HeKindSini2009}
L. He, S. Kindermann and M. Sini, Reconstruction of shapes and impedance functions using few far field measurements, {\it J. Comput. Phys. \bf228}, (2009), 717–730.

\bibitem{LH-SPM}L. H\"{o}rmander, {\it The Analysis of Linear Partial Differential Operators I: Distribution Theory and Fourier Analysis} (2nd Edition), Springer, Berlin Heidelberg, 2003.

\bibitem{ItoJinZou}
K. Ito, B. Jin, and J. Zou, A direct sampling method to an inverse medium scattering problem, {\it Inverse Probl. \bf28} (2012), 025003.








\bibitem{Kirsch98}
A. Kirsch, Charaterization of the shape of a scattering obstacle using the spectral data of the far field operator, {\it Inverse Probl. \bf14} (1998),1489--1512.


\bibitem{KirschGrinberg}
A. Kirsch and N. Grinberg, {\it The Factorization Method for Inverse Problems}, Oxford University Press, 2008.


\bibitem{KR95}R. Kress, On the numerical solution of a hypersingular integral equation in scattering theory, {\it J. Comput. Appl. Math. \bf61} (1995), 345-360 . 

\bibitem{KressRun98}R. Kress  and W. Rundell, Inverse obstacle scattering using reduced data,
{\it SIAM J. Appl. Math. \bf59} (1998), 442-454.

\bibitem{KressRundell2001}
R. Kress and W. Rundell, Inverse scattering for shape and impedance, {\it Inverse Probl. \bf17}, (2001), 1075–1085.

\bibitem{KressRundell2018}
R. Kress and W. Rundell, Inverse scattering for shape and impedance revisited, {\it J. Integr. Equations Appl. \bf30}, (2018), 293-331.

\bibitem{Lagergren}
R. Lagergren, The back-scattering problem in three dimensions, {\it J Pseudo-Differ. Oper. \bf2}, (2011), 1–64.


\bibitem{Lee2014}
K. M. Lee, Inverse scattering problem from an impedance obstacle via two-steps method, {\it J. Comput. Phys. \bf274}, (2014), 182–190.


\bibitem{LiLiu2015} J. Li and H. Liu, Recovering a polyhedral obstacle by a few backscattering measurements, {\it J. Differ. Equations \bf259} (2015), 2101-2120.

\bibitem{LiLiuWang2017}
J. Li, H. Liu and Y. Wang, Recovering an electromagnetic obstacle by a few phaseless backscattering measurements, {\it Inverse Probl. \bf33}, (2017), 035011.

\bibitem{LiLiuZou}
J. Li, H. Liu and J. Zou, Locating multiple multiscale acoustic scatterers, {\it SIAM Multi. Model. Simul. \bf12} (2014), 927-952.

\bibitem{LiZou} J. Li and J. Zou,
A direct sampling method for inverse scattering using far field data, {\it Inverse Probl. Imag. \bf7} (2013), 757-775.



\bibitem{LiuNakaSini2007}
J. Liu, G. Nakamura and M. Sini, Reconstruction of the shape and surface impedance from acoustic scattering data for an arbitrary cylinder, {\it SIAM J. Appl. Math. \bf67}, (2007), 1124–1146.

\bibitem{LiuIP17} X. Liu,
A novel sampling method for multiple multiscale targets from scattering amplitudes at a fixed frequency, {\it Inverse Probl. \bf33} (2017), 085011.

\bibitem{LiuShi} X. Liu and Q. Shi, A quantitative sampling method for elastic and electromagnetic sources, {\it J. Comput. Phys. \bf 539} (2025), Paper No. 114251.

\bibitem{LiuSun}
X. Liu and J. Sun, Data recovery in inverse scattering: from limited-aperture to full-aperture, {\it J. Comput. Phys. \bf 386} (2019), 350-364.

\bibitem{Ludwig} 
D. Ludwig, Uniform asymptotic expansion of the field scattered by a convex object at high frequencies,  {\it 	Commun. Pure Appl. Math. \bf 20(1)} (1967), 187-203

\bibitem{Majda}
A. Majda, High frequency asymptotics for the scattering matrix and the inverse problem of acoustical scattering,
{\it Commun. Pure Appl. Math. \bf 29(3)} (1976), 261-291.

\bibitem{Mclean} W. Mclean, {\it Strongly Elliptic Systems and Boundary Integral Equation}, Cambridge University Press, Cambridge, 2000.

\bibitem{Morawetz}
C. Morawetz, Decay for solutions of the exterior problem for the wave equation, {\it 	Commun. Pure Appl. Math. \bf 28(2)} (1975), 229-264.

\bibitem{OlaPS}
P. Ola, L. P\"{a}iv\"{a}rinta and V. Serov, Recovering singularities from backscattering in two dimensions, {\it Commun. Part. Diff. Equat. \bf26}, (2001), 697–715.

\bibitem{Potthast2010} R. Potthast,
 A study on orthogonality sampling, {\it Inverse Probl. \bf26} (2010), 074075.

\bibitem{RakUnl}
Rakesh and G. Uhlmann, Uniqueness for the inverse backscattering problem for angularly controlled potentials, {\it Inverse Probl. \bf30}, (2014), 065005.

\bibitem{Serov}
V. Serov, Inverse fixed angle scattering and backscattering problems in two dimensions, {\it Inverse Probl. \bf24}, (2008), 065002.

\bibitem{Serrenho2006}
P. Serranho, A hybrid method for inverse scattering for shape and impedance, {\it Inverse Probl. \bf22}, (2006), 663–680.

\bibitem{Shin2016}
J. Shin, Inverse obstacle backscattering problems with phaseless data, {\it Eur. J. Appl. Math. \bf27}, (2016), 111–130.



\bibitem{Spence}
E. A. Spence, Wavenumber-Explicit Bounds in Time-Harmonic Acoustic Scattering, {\it SIAM J. Math. Anal. \bf46(4)}, (2014) 2987–3024.

\bibitem{StefUhl}
P. Stefanov and G. Uhlmann, Inverse backscattering for the acoustic equation, {\it SIAM J. Math. Anal. \bf28} (1997), 1191-1204.


\bibitem{Taylor} 
M. Taylor, Grazing rays and reflection of singularities of solutions to wave equations, {\it Commun. Pure Appl. Math. \bf29(1)}, (1976), 1-37.

\bibitem{Uhlmann2001}
G. Uhlmann, A time-dependent approach to the inverse backscattering problem, {\it Inverse Probl. \bf17}, (2001), 703–716.

\bibitem{Wang}
J.-N. Wang, Inverse backscattering problem for the acoustic equation in even dimensions, {\it J. Math. Anal. Appl. \bf220}, (1998), 676–696.


\end{thebibliography}

\end{document}